\DeclareMathOperator*{\argmin}{arg\,min}
\newtheorem{theorem}{Theorem}[section]
\newtheorem{corollary}[theorem]{Corollary}
\newtheorem{lemma}[theorem]{Lemma}
\newtheorem{remark}[theorem]{Remark}
\newenvironment{proof}[1][Proof]{\textbf{#1.} }{\hfill\rule{0.5em}{0.5em}}
{\catcode`\@=11\global\let\AddToReset=\@addtoreset
	\AddToReset{equation}{section}
	
	\AddToReset{theorem}{section}

	\def\nc{\newcommand}
	\def\al{\alpha}\def\de{\delta}
	
	\def\ep{\epsilon}\def\ka{\kappa}
	
	\def\om{\omega}

	 \def\Om{\Omega}

	\nc\pa{\partial}
	
	\nc\CC{\mathbb{C}}
	\nc\RR{\mathbb{R}}
	\nc\QQ{\mathbb{Q}}
	\nc\ZZ{\mathbb{Z}}
	\nc\NN{\mathbb{N}}

	%%% Tripple norm %%%%%
	\newcommand{\vertiii}[1]{{\left\vert\kern-0.25ex\left\vert\kern-0.25ex\left\vert #1 
			\right\vert\kern-0.25ex\right\vert\kern-0.25ex\right\vert}}
   %%%%%%%%%%%%%%%%%%%%

	
\begin{document}	
		\title{Universal potential estimates for $1<p\leq 2-\frac{1}{n}$}
		\author{
			{\bf Quoc-Hung Nguyen\thanks{E-mail address: : qhnguyen@amss.ac.cn, Academy of Mathematics and Systems Science, Chinese Academy
					of Sciences, Beijing, 100190, China.} ~ and~ Nguyen Cong Phuc\thanks{E-mail address: pcnguyen@math.lsu.edu, Department of Mathematics, Louisiana State University, 303 Lockett Hall,
					Baton Rouge, LA 70803, USA. }} \vspace{0.3in}\\
				{\it Dedicated to Giuseppe Mingione on the occasion of his 50th birthday}
			}
		\date{}  
		\maketitle
		\begin{abstract}
		 We extend the so-called universal potential estimates of Kuusi-Mingione type (J.   Funct. Anal. 262:  4205--4269, 2012) to the singular case $1<p\leq 2-1/n$ for the quasilinear equation 
		 with measure data
		 \begin{equation*}
		 -\operatorname{div}(A(x,\nabla u))=\mu
		 \end{equation*}
		 in a bounded open subset $\Omega$ of $\mathbb{R}^n$, $n\geq 2$,  with a finite signed measure $\mu$ in $\Omega$. The operator $\operatorname{div}(A(x,\nabla u))$
		 is modeled after the $p$-Laplacian   $\Delta_p u:= {\rm div}\, (|\nabla u|^{p-2}\nabla u)$, where the nonlinearity $A(x, \xi)$ ($x, \xi \in \RR^n$) is assumed to satisfy
		 natural growth and monotonicity conditions of order $p$, as well as certain additional regularity conditions in the $x$-variable.
		
			\medskip
			
			\medskip
			
			\medskip
			
			\noindent MSC2020: primary: 35J92; secondary: 35J75, 31B10.
			
			\medskip
			
			\noindent Keywords: pointwise estimate, potential estimate, Wolff's potential, Riesz's potential, fractional maximal function,  Calder\'on space,
			$p$-Laplacian, quasilinear equation, measure data.
		\end{abstract}   
		
		\tableofcontents
		
		\section{Introduction and main results}

	We are concerned here with  the quasilinear elliptic equation with measure data 
	\begin{equation}\label{quasi-measure}
	-\operatorname{div}(A(x,\nabla u))=\mu,
	\end{equation}
	 in a bounded open subset $\Omega$ of $\mathbb{R}^n$, $n\geq 2$. Here $\mu$ is a finite signed measure in $\Omega$ and
	the nonlinearity  $A = (A_1, \dots, A_n):\mathbb{R}^n\times \mathbb{R}^n\to \mathbb{R}^n$ is vector valued function.
	Throughout the paper, we assume that there exist $\Lambda\geq 1$ and $p>1$ such that
	\begin{equation}
	\label{condi1}| A(x,\xi)|\le \Lambda |\xi|^{p-1}, \quad | D_\xi A(x,\xi)|\le \Lambda |\xi|^{p-2},
	\end{equation}
	\begin{equation}
	\label{condi2}  \langle D_\xi A(x,\xi)\eta,\eta\rangle\geq \Lambda^{-1}  |\xi|^{p-2} |\eta|^2
	\end{equation}
	for every $x\in \mathbb{R}^n$ and every $(\xi,\eta)\in \mathbb{R}^n\times \mathbb{R}^n\backslash\{(0,0)\}$. More regularity assumptions on function $x\mapsto A(x,\xi)$
	will be needed later.
	\vspace{0.15cm}
	\\
	A typical example   of  \eqref{quasi-measure} is  the $p$-Laplace equation with measure data
	\begin{equation}\label{p-laplace}
	-\Delta_p \, u:= -{\rm div}(|\nabla u|^{p-2} \nabla u)=\mu \quad \text{in~} \Omega.
	\end{equation}
	Since the seminal work of Kilpel\"ainen and Mal\'y  \cite{KM} (see also \cite{TW} for a different approach), the study of pointwise behaviors of solutions to quasilinear equations with measure data \eqref{quasi-measure} has undergone substantial progress.  In particular, the series of works \cite{Duzamin2,55DuzaMing, KM2} (see also \cite{M11}) provide interesting pointwise bounds for gradients of solutions to the seemingly unwieldy  equation \eqref{quasi-measure}, at least for $p>2-\frac{1}{n}$. These pointwise gradient bounds have been extended recently in \cite{HP2, DZ, HP3} for the more singular case $1<p\leq 2- \frac{1}{n}$. 
	\vspace{0.15cm}
	\\
		On the other hand, a more unified approach to pointwise bounds for solutions and their gradients was presented in \cite{KM1}. The results of \cite{KM1} give pointwise bounds
	 not only for the  size but also for the oscillation of solutions and their derivatives expressed in terms of bounds by linear or nonlinear potentials in certain Calder\'on spaces. These cover different kinds of pointwise fractional derivative estimates as well as estimates for  (sharp) fractional maximal functions  of the solutions and their gradients.\vspace{0.15cm}
	\\However, the treatment of \cite{KM1} is still confined to the range $p> 2-\frac{1}{n}$, and the purpose of this note is to extend it to the singular case $1<p\leq 2-\frac{1}{n}$.  Note that, for $1<p\leq 2-\frac{1}{n}$, by looking at the fundamental solution we see that in general distributional solutions of \eqref{p-laplace}  may not even belong to $W^{1,1}_{\rm loc}(\Omega)$. 
\vspace{0.15cm}
\\	
	Thus in this paper we shall restrict ourselves only to the case 
	$$1<p\leq 2-\frac{1}{n},$$
	and note that the main results obtained here also hold in the case $2-\frac{1}{n}<p < 2$ thanks to \cite{KM1}. Moreover, except for the comparison estimates obtained earlier in \cite{HP1, HP3}, the methods used in this paper are very much guided by those of \cite{KM1}. We would also like to point out that there are  analogous results in the case $p\ge 2$ that we  refer to \cite{KM1} for the precise statements.  \\
	In some sense our pointwise regularity for the non-homogeneous equation \eqref{quasi-measure} is obtained from  perturbation/interpolation arguments involving the associated homogeneous equations.  Thus information on the  regularity of associated homogeneous equations will play an important role. In this direction, we first recall a quantitative version 
	of the well-known De Giorgi's result that established  $C^{\alpha_0}$, $\alpha_0\in (0,1)$, regularity for solutions of ${\rm div}\left( {A(x,\nabla w)} \right) = 0$.
	Henceforth, by $Q_r(x_0)$  we mean the open cube $Q_r(x_0):= x_0+ (-r,r)^n$ with center $x_0\in\RR^n$ and side-length  $2r$. In other words,
	$$Q_r(x_0)=\{x\in\RR^n: |x-x_0|_\infty:= \max_{1\leq i\leq n} |x_i-x_{0i}| <r\}.$$
	
	 \begin{lemma}\label{osckkk}
		Under \eqref{condi1}--\eqref{condi2}, let $w\in W^{1,p}(\Om)$, $p>1$, be a solution of the equation ${\rm div}\left( {A(x,\nabla w)} \right) = 0$ in $\Om$. Then there exists 
		$\alpha_0\in (0,1)$, depending only on $n,p$ and $\Lambda$,   such that  for any cubes $Q_\rho(x_0)\subset Q_R(x_0)\subset\Om$, and   $\ep\in (0,1]$, we have 
		\begin{equation}\label{oscp}
		\fint_{Q_\rho(x_0)} |w-(w)_{Q_\rho(x_0)}|^p dx \lesssim \left(\frac{\rho}{R}\right)^{\alpha_0 p} \fint_{Q_R(x_0)} |w-(w)_{Q_R(x_0)}|^p dx,
		\end{equation}  
		and 
		\begin{equation}\label{oscqk}
		\inf_{q\in \RR}\fint_{Q_\rho(x_0)} |w-q|^\ep dx \lesssim \left(\frac{\rho}{R}\right)^{\alpha_0 \ep} \inf_{q\in\RR}\fint_{Q_R(x_0)} |w-q|^\ep dx.
		\end{equation} 
	\end{lemma}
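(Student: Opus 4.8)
The plan is to obtain both \eqref{oscp} and \eqref{oscqk} as direct consequences of two classical ingredients of the De Giorgi--Nash--Moser theory for solutions of $\operatorname{div}(A(x,\nabla w))=0$ under \eqref{condi1}--\eqref{condi2}, with constants that are uniform over the whole class of such operators with fixed $n,p,\Lambda$. The first is the \emph{oscillation decay} estimate: there exist $\alpha_0=\alpha_0(n,p,\Lambda)\in(0,1)$ and $C=C(n,p,\Lambda)$ such that
\[
\operatorname{osc}_{Q_r(x_0)} w \le C\Big(\tfrac{r}{R}\Big)^{\alpha_0}\,\operatorname{osc}_{Q_R(x_0)} w
\qquad\text{whenever } 0<r\le R \text{ and } Q_R(x_0)\subset\Omega.
\]
The second is the \emph{local boundedness} estimate: for every $s>0$ there is $C=C(n,p,\Lambda,s)$ such that
\[
\operatorname{ess\,sup}_{Q_{R/2}(x_0)}|w-c| \le C\Big(\fint_{Q_R(x_0)}|w-c|^{s}\,dx\Big)^{1/s}
\qquad\text{for every } c\in\RR \text{ and } Q_R(x_0)\subset\Omega.
\]
The oscillation decay follows in the usual way by iterating the weak Harnack inequality for the nonnegative solutions $\operatorname{ess\,sup}_{Q_r}w-w$ and $w-\operatorname{ess\,inf}_{Q_r}w$; here one uses that if $w$ solves $\operatorname{div}(A(x,\nabla w))=0$ then so does $w-c$ (trivially) and also $-w$, with $A$ replaced by $\bar A(x,\xi):=-A(x,-\xi)$, which still satisfies \eqref{condi1}--\eqref{condi2} with the same $p$ and $\Lambda$, so that all the De Giorgi/Moser constants stay uniform. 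The local boundedness estimate is the usual Moser/De Giorgi iteration, the passage from the natural exponent down to an arbitrary $s\le p$ (in particular to $s=\epsilon\in(0,1]$) being carried out by the classical interpolation argument based on a weighted Young inequality. In particular $w$ is locally Hölder continuous, but since everything below is phrased with $\operatorname{ess\,sup}/\operatorname{ess\,inf}$ this will not be used.

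Granting these two facts, I would prove \eqref{oscp} as follows. Since $|w-(w)_{Q_\rho(x_0)}|\le\operatorname{osc}_{Q_\rho(x_0)}w$ a.e.\ on $Q_\rho(x_0)$, we have $\fint_{Q_\rho(x_0)}|w-(w)_{Q_\rho(x_0)}|^p\,dx\le(\operatorname{osc}_{Q_\rho(x_0)}w)^p$. If $\rho\le R/2$, apply the oscillation decay with $r=\rho$ and with $R/2$ in place of $R$ (legitimate since $Q_{R/2}(x_0)\subset Q_R(x_0)\subset\Omega$) to get $\operatorname{osc}_{Q_\rho(x_0)}w\lesssim(\rho/R)^{\alpha_0}\operatorname{osc}_{Q_{R/2}(x_0)}w$, and then use local boundedness with $s=p$ and $c=(w)_{Q_R(x_0)}$, together with $\operatorname{osc}_{Q_{R/2}(x_0)}w\le2\operatorname{ess\,sup}_{Q_{R/2}(x_0)}|w-(w)_{Q_R(x_0)}|$, to get $\operatorname{osc}_{Q_{R/2}(x_0)}w\lesssim\big(\fint_{Q_R(x_0)}|w-(w)_{Q_R(x_0)}|^p\,dx\big)^{1/p}$; chaining these and raising to the power $p$ yields \eqref{oscp}. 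If instead $R/2<\rho\le R$, then \eqref{oscp} is elementary: by $|a-b|^p\le2^{p-1}(|a-c|^p+|b-c|^p)$, Jensen, and $|Q_R|/|Q_\rho|=(R/\rho)^n\le2^n$, one has $\fint_{Q_\rho(x_0)}|w-(w)_{Q_\rho(x_0)}|^p\,dx\lesssim\fint_{Q_\rho(x_0)}|w-(w)_{Q_R(x_0)}|^p\,dx\lesssim\fint_{Q_R(x_0)}|w-(w)_{Q_R(x_0)}|^p\,dx$, while $(\rho/R)^{\alpha_0 p}\ge2^{-\alpha_0 p}$, so the desired inequality holds with a constant depending only on $n,p,\Lambda$.

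The estimate \eqref{oscqk} is proved by exactly the same scheme, with $\epsilon\in(0,1]$ in place of $p$. For the upper side, choosing $q=\operatorname{ess\,inf}_{Q_\rho(x_0)}w$ gives $\inf_{q\in\RR}\fint_{Q_\rho(x_0)}|w-q|^\epsilon\,dx\le(\operatorname{osc}_{Q_\rho(x_0)}w)^\epsilon$; for the lower side, local boundedness with $s=\epsilon$ gives, for every $q\in\RR$, $\operatorname{osc}_{Q_{R/2}(x_0)}w\le2\operatorname{ess\,sup}_{Q_{R/2}(x_0)}|w-q|\lesssim\big(\fint_{Q_R(x_0)}|w-q|^\epsilon\,dx\big)^{1/\epsilon}$, and taking the infimum over $q$ on the right yields $\operatorname{osc}_{Q_{R/2}(x_0)}w\lesssim\big(\inf_{q\in\RR}\fint_{Q_R(x_0)}|w-q|^\epsilon\,dx\big)^{1/\epsilon}$. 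Combining this with the oscillation decay when $\rho\le R/2$, and arguing with the same elementary comparison (now with a near-minimizing value $q$ for $Q_R(x_0)$) when $R/2<\rho\le R$, gives \eqref{oscqk}.

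Since all of this is classical regularity theory, there is essentially no hard step; the points requiring some care are: (a) verifying that $w-c$ and $-w$, hence $\operatorname{ess\,sup}_{Q_r}w-w$ and $w-\operatorname{ess\,inf}_{Q_r}w$, all solve equations whose nonlinearities still obey \eqref{condi1}--\eqref{condi2} with the same constants, so that the quoted De Giorgi/Moser bounds are genuinely uniform in the data; (b) the passage to small exponents $s=\epsilon\le1$ in the local boundedness estimate (after which the implicit constant in \eqref{oscqk} may also depend on $\epsilon$, which is immaterial for the applications); and (c) covering the full range $0<\rho\le R$ rather than merely $\rho$ small, which is why the case $R/2<\rho\le R$ must be handled separately by the elementary inequalities above.
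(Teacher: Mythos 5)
Your proof is correct, and it is essentially the classical De Giorgi argument that the paper is implicitly invoking. The only difference is organizational: the paper derives \eqref{oscp} directly from \cite[Chapter 7]{Giu} and then obtains \eqref{oscqk} by combining \eqref{oscp} with a reverse H\"older inequality for $w$ (that is, $\bigl(\fint_{Q_{R/2}}|w-q|^p\bigr)^{1/p}\lesssim\bigl(\fint_{Q_R}|w-q|^\ep\bigr)^{1/\ep}$, followed by Jensen to lower the exponent on the left of \eqref{oscp}), whereas you prove both \eqref{oscp} and \eqref{oscqk} from the pair of primitives ``oscillation decay'' and ``local sup-bound by an $L^s$ average, $s>0$ arbitrary.'' These are the same toolbox: the sup-bound with small $s$ is precisely what the reverse H\"older property encodes, so nothing of substance is gained or lost either way. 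Your care on the subsidiary points is warranted and correct: the observation that $\bar A(x,\xi):=-A(x,-\xi)$ again satisfies \eqref{condi1}--\eqref{condi2} with the same $\Lambda,p$ (so that $w\mapsto\operatorname{ess\,sup}w-w$ and $w\mapsto w-\operatorname{ess\,inf}w$ yield nonnegative solutions with uniformly controlled constants) is exactly what makes the weak Harnack iteration legitimate, and you are right that $A(x,0)=0$ and the coercivity $A(x,\xi)\cdot\xi\gtrsim|\xi|^p$ both follow from \eqref{condi1}--\eqref{condi2}, so the De Giorgi class conditions hold uniformly. The case distinction $\rho\le R/2$ versus $R/2<\rho\le R$ is the standard way to cover the full range, and your elementary handling of the latter case is fine.
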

	We point out that 	the proof of \eqref{oscp} follows from \cite[Chapter 7]{Giu}, whereas the proof of \eqref{oscqk} follows from \eqref{oscp} and the reverse H\"older property of $w$.
	\vspace{0.15cm}
	\\
	In the case the nonlinearity $A(x,\xi)$ is independent of $x$, we actually have $C^{1, \beta_0}$, $\beta_0\in (0,1)$, regularity the the homogeneous equation (see, e.g., \cite{DiB, Lin, Man}). For our purpose, we shall use the following quantitative version of this regularity result (see \cite{55DuzaMing, DZ}).
	
	\begin{lemma}\label{osckkkfornab}
		Let $v\in W^{1,p}(\Om)$, $p>1$, be a solution of $\operatorname{div}\left( {A_0(\nabla v)} \right) = 0$ in $\Om$, where $A_0(\xi)$ satisfies \eqref{condi1}--\eqref{condi2}
		and is independent of $x$.  Then there exists 
		$\beta_0\in (0,1)$, depending only on $n,p$ and $\Lambda$, such that  for any cubes $Q_\rho(x_0)\subset Q_R(x_0)\subset\Om$ and  $\ep\in (0,1]$, we have  
		\begin{equation*}
		\fint_{Q_\rho(x_0)} |\nabla w-(\nabla w)_{Q_\rho(x_0)}| dx \lesssim \left(\frac{\rho}{R}\right)^{\beta_0} \fint_{Q_R(x_0)} |\nabla w-(\nabla w)_{Q_R(x_0)}| dx,
		\end{equation*}  
		and
		\begin{equation}\label{oscqk'}
		\inf_{\mathbf{q}\in \RR^n}\fint_{Q_\rho(x_0)} |\nabla v-\mathbf{q}|^\ep dx \lesssim \left(\frac{\rho}{R}\right)^{\beta_0 \ep} \inf_{\mathbf{q}\in\RR^n}\fint_{Q_R(x_0)} |\nabla v-\mathbf{q}|^\ep dx.
		\end{equation}  
	\end{lemma}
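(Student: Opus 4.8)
The plan is to derive Lemma~\ref{osckkkfornab} from the classical $C^{1,\beta_0}$ regularity theory for the homogeneous equation $\operatorname{div}(A_0(\nabla v))=0$, exactly as in \cite{55DuzaMing, DZ}. First I would recall that under \eqref{condi1}--\eqref{condi2}, any weak solution $v\in W^{1,p}(\Om)$ of the $x$-independent equation has locally H\"older continuous gradient, with a scale-invariant excess-decay estimate: for concentric cubes $Q_\rho(x_0)\subset Q_R(x_0)\subset\Om$,
\begin{equation*}
\fint_{Q_\rho(x_0)} |\nabla v-(\nabla v)_{Q_\rho(x_0)}|\, dx \lesssim \left(\frac{\rho}{R}\right)^{\beta_0} \fint_{Q_R(x_0)} |\nabla v-(\nabla v)_{Q_R(x_0)}|\, dx,
\end{equation*}
where $\beta_0\in(0,1)$ depends only on $n,p,\Lambda$. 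This is the first displayed inequality (I note the excerpt writes $\nabla w$ there, evidently a typo for $\nabla v$), and it follows by combining the De Giorgi--Nash--Moser iteration applied to the (truncated) derivatives $\partial_i v$, which solve a uniformly elliptic linearized equation away from the degeneracy set, with a standard covering/scaling argument; the details are in the references and need not be reproduced.

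Next I would upgrade the $L^1$-excess decay to the $L^\ep$-statement \eqref{oscqk'}. The key point is that for any fixed $\mathbf q\in\RR^n$, the function $v-\mathbf q\cdot x$ is again a solution of an equation of the same type (with nonlinearity $\xi\mapsto A_0(\xi+\mathbf q)$, which still satisfies structure conditions of the form \eqref{condi1}--\eqref{condi2} with comparable constants after a routine check), so the gradient $\nabla v-\mathbf q$ satisfies the same excess-decay. Then one uses the elementary fact that for $\ep\in(0,1]$,
\begin{equation*}
\inf_{\mathbf q\in\RR^n}\fint_{Q_\rho}|\nabla v-\mathbf q|^\ep\,dx \le \fint_{Q_\rho}|\nabla v-(\nabla v)_{Q_\rho}|^\ep\,dx \le \left(\fint_{Q_\rho}|\nabla v-(\nabla v)_{Q_\rho}|\,dx\right)^{\ep}
\end{equation*}
by Jensen/H\"older (since $t\mapsto t^\ep$ is concave), while in the other direction one needs the reverse bound
\begin{equation*}
\fint_{Q_R}|\nabla v-(\nabla v)_{Q_R}|\,dx \lesssim \inf_{\mathbf q\in\RR^n}\left(\fint_{Q_R}|\nabla v-\mathbf q|^\ep\,dx\right)^{1/\ep},
\end{equation*}
which is where a reverse H\"older inequality for $\nabla v$ enters: $\nabla v$ is locally in $L^q$ for some $q>p$ with a reverse-H\"older estimate, and this self-improving integrability lets one pass from the $L^\ep$ norm of $\nabla v-\mathbf q$ back up to its $L^1$ (indeed $L^q$) norm on a slightly larger cube, at the cost of harmless constants; choosing $\mathbf q=(\nabla v)_{Q_R}$ to bound the infimum from the right and optimizing over $\mathbf q$ on the left then chains the three displays together, absorbing the comparison of cube radii into the $\lesssim$.

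The main obstacle — more precisely, the only part requiring genuine care rather than bookkeeping — is the interplay in the last step between the two different exponents ($1$ and $\ep$) in the excess: one cannot simply raise the $L^1$ decay to the power $\ep$ and be done, because on the left-hand side of \eqref{oscqk'} we want an $L^\ep$ infimum of $|\nabla v-\mathbf q|$, not of $|\nabla v-\mathbf q|^\ep$ against its own average. Reconciling this is precisely the role of the reverse H\"older inequality for $\nabla v$ (a Gehring-type self-improvement, valid here since $A_0$ is $x$-independent and the equation is homogeneous), which allows all $L^s$-excess quantities for $0<s\le q$ to be compared up to multiplicative constants depending only on $n,p,\Lambda$; I would cite \cite{Giu} (or \cite{55DuzaMing}) for this reverse H\"older property, noting that it is the exact analogue of how \eqref{oscqk} is deduced from \eqref{oscp} in Lemma~\ref{osckkk}. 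Once that comparison is in hand, \eqref{oscqk'} is immediate from the first displayed excess-decay inequality.
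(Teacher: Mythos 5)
Your overall structure (cite the classical $C^{1,\beta_0}$ theory for the $L^1$ excess decay, then upgrade to $L^\varepsilon$ via a reverse-H\"older type comparison) is exactly the route the paper indicates, so the shape of the argument is right. But the specific step you flag as "the key point" — that $v - \mathbf{q}\cdot x$ solves an equation with nonlinearity $\xi \mapsto A_0(\xi + \mathbf{q})$ which "still satisfies structure conditions of the form \eqref{condi1}--\eqref{condi2} with comparable constants after a routine check" — is false, and this is a genuine gap. The conditions \eqref{condi1}--\eqref{condi2} are degenerate at $\xi = 0$: the growth bound $|A_0(\xi)| \le \Lambda|\xi|^{p-1}$ and the lower ellipticity bound $\ge \Lambda^{-1}|\xi|^{p-2}|\eta|^2$ are pinned to the origin in $\xi$. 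For the shifted map $\tilde{A}_0(\xi) := A_0(\xi + \mathbf{q})$ the degeneracy sits at $\xi = -\mathbf{q}$, so $|\tilde{A}_0(0)|$ can be of order $|\mathbf{q}|^{p-1} \neq 0$ and the ratio $|\xi + \mathbf{q}|^{p-2}/|\xi|^{p-2}$ is unbounded in $\xi$ for fixed $\mathbf{q} \neq 0$. No constant depending only on $n,p,\Lambda$ (independent of $\mathbf{q}$, which in the application is $(\nabla v)_{Q_R}$ and can be arbitrarily large) makes this work.

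This matters because your route to the crucial inequality
\begin{equation*}
\fint_{Q_R}|\nabla v - (\nabla v)_{Q_R}|\, dx \lesssim \inf_{\mathbf{q}\in\RR^n}\left(\fint_{Q_R}|\nabla v - \mathbf{q}|^\varepsilon\, dx\right)^{1/\varepsilon}
\end{equation*}
implicitly relies on a reverse-H\"older (or $L^\infty$--$L^\varepsilon$) bound for the \emph{affine excess} $\nabla v - \mathbf{q}$, uniform in $\mathbf{q}$. A reverse-H\"older inequality "for $\nabla v$" alone does not give this: controlling $\|\nabla v - \mathbf{q}\|_{L^q}$ by $\|\nabla v\|_{L^\varepsilon} + |\mathbf{q}|$ is useless, since $|\mathbf{q}|$ need not be small compared to $\|\nabla v - \mathbf{q}\|_{L^\varepsilon}$. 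What is actually needed is either (i) a Caccioppoli inequality with affine comparison functions, proved directly from the monotonicity of $A_0$ without pretending the shifted operator has the same structure, or (ii) the Campanato-type iteration: from the $L^1$ excess decay plus the $C^{1,\beta_0}$ theory one first derives $\operatorname{osc}_{Q_{R/2}}\nabla v \lesssim \fint_{Q_R}|\nabla v - (\nabla v)_{Q_R}|\,dx$, and then one runs a dichotomy/iteration argument to pass from the $L^1$ excess to the $L^\varepsilon$ excess, as in \cite[Theorem 2.2 and its proof]{KM1} or \cite[Ch.~7]{Giu}. Either of these repairs the gap; the "shift the nonlinearity" shortcut does not.

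Finally, one small side remark: it is not just a typo issue that the first display in the lemma uses $\nabla w$ where $\nabla v$ is meant; in your written proof you silently correct this, which is fine, but worth being explicit about since $w$ has a different meaning elsewhere in the paper.
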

	In what follows, we shall use the (maximal) constants $\alpha_0$ in Lemma    \ref{osckkk} and $\beta_0$ in Lemma \ref{osckkkfornab} as certain thresholds in our regularity theory.  
	Also, henceforth, we reserve the letter $\kappa$ for the following constant 
	\begin{equation}\label{kappaconst}
	\kappa:= (p-1)^2/2.
	\end{equation}	
	Our first result provides a De Giorgi's theory for non-homogeneous equations with measure data, which also includes \cite[Theorem 1.4]{HP3}  as an end-point case.
	For the case $p>2-1/n$, see \cite[Theorem 1.1]{KM1}.
	
	\begin{theorem} \label{uptoalpha0}
		Under \eqref{condi1}--\eqref{condi2}, with $1<p \leq 2- \frac{1}{n}$,  let  $\kappa$ be as in \eqref{kappaconst}, and suppose that $u\in C^0(\Omega)\cap W^{1,p}_{\rm loc}(\Om)$ is a solution of \eqref{quasi-measure}. Let $Q_R(x_0)\subset\Omega$ and $\bar{\alpha} \in (0, \alpha_0)$, where $\alpha_0$ is as in Lemma \ref{osckkk}. Then for any $x,y\in Q_{R/8}(x_0)$ 	we have 
		\begin{align}\label{fracalphau}
		|u(x)-u(y)| &\lesssim \left[{\bf W}_{1-\alpha (p-1)/p ,p}^{R}(|\mu|)(x) +{\bf W}_{1-\alpha (p-1)/p ,p}^{R}(|\mu|)(y)\right] |x-y|^\alpha\nonumber\\
		&\qquad  +\left(\fint_{Q_{R}(x_0)} |u|^\kappa dx\right)^{\frac{1}{\kappa}} \left(\frac{|x-y|}{R}\right)^{\alpha}
		\end{align}
		uniformly in $\alpha\in [0,\bar{\alpha}]$. Here the implicit constant depends only on $n,p,\Lambda$, and $\bar{\alpha}$.
	\end{theorem}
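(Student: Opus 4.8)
The plan is to establish the estimate \eqref{fracalphau} via a Campanato-type decay estimate for the oscillations of $u$, obtained by comparing $u$ on dyadic cubes with solutions of the associated homogeneous equation. Fix $x_0$ and $R$ with $Q_R(x_0)\subset\Omega$, and for a point $z\in Q_{R/8}(x_0)$ and radius $r\le R/4$ set $E(z,r) := \inf_{q\in\RR}\left(\fint_{Q_r(z)}|u-q|^\kappa\,dx\right)^{1/\kappa}$, using the constant $\kappa=(p-1)^2/2$ from \eqref{kappaconst}. On each cube $Q_r(z)$ let $w$ solve the homogeneous Dirichlet problem ${\rm div}(A(x,\nabla w))=0$ in $Q_r(z)$ with $w=u$ on $\partial Q_r(z)$. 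The comparison estimates of \cite{HP1, HP3} control $\fint_{Q_r(z)}|u-w|^\kappa\,dx$ (in the singular range $1<p\le 2-1/n$ one measures the difference in the sub-linear exponent $\kappa$) by a power of the Wolff-type quantity $\big[\,|\mu|(Q_{2r}(z))\,r^{p-n}\,\big]^{1/(p-1)}$, i.e.\ essentially by the local contribution to ${\bf W}^{R}_{1-\alpha(p-1)/p,\,p}(|\mu|)(z)$; meanwhile Lemma~\ref{osckkk} applied to $w$ gives the geometric decay $E_w(z,\lambda r)\lesssim \lambda^{\alpha_0}E_w(z,r)$ for the oscillation of $w$ at any scale $\lambda\in(0,1)$. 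Combining the two via the triangle inequality in $L^\kappa$ yields the one-step inequality
\[
E(z,\lambda r)\;\lesssim\;\lambda^{\alpha_0}\,E(z,r)\;+\;\lambda^{-n/\kappa}\,\big[\,|\mu|(Q_{2r}(z))\,r^{p-n}\,\big]^{\frac{1}{p-1}}.
\]

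The second step is to iterate this inequality. Fix $\bar\alpha<\alpha_0$ and choose $\lambda$ small (depending on $n,p,\Lambda,\bar\alpha$) so that $\lambda^{\alpha_0}\le\tfrac12\lambda^{\bar\alpha}$; then summing the geometric series along the scales $r_j=\lambda^j R$ produces, for any $\alpha\in[0,\bar\alpha]$,
\[
E(z,r)\;\lesssim\;\Big(\frac{r}{R}\Big)^{\alpha}E(z,R)\;+\;\sum_{j\,:\,r_j\ge r}\Big(\frac{r}{r_j}\Big)^{\alpha}\big[\,|\mu|(Q_{2r_j}(z))\,r_j^{\,p-n}\,\big]^{\frac{1}{p-1}}.
\]
The sum is a discretized Wolff potential: writing $r_j^{\,p-n}=r_j^{-\alpha(p-1)}\,r_j^{\,p-\alpha(p-1)-n}$ and using $\big(r/r_j\big)^{\alpha}r_j^{-\alpha}=r^{\alpha}r_j^{-2\alpha}$... more simply, the factor $\big(r/r_j\big)^{\alpha}$ paired with the weight at level $\tfrac{p-n}{p-1}$ matches exactly the Wolff potential ${\bf W}_{\beta,p}$ with $\beta=1-\tfrac{\alpha(p-1)}{p}$, because $\big(\beta p-n\big)/(p-1)=\tfrac{p-n}{p-1}-\alpha$. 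Hence the sum is bounded by $r^{\alpha}\,{\bf W}^{R}_{1-\alpha(p-1)/p,\,p}(|\mu|)(z)$, and one controls $E(z,R)$ by $\big(\fint_{Q_R(x_0)}|u|^\kappa\big)^{1/\kappa}$ after enlarging the base cube by a fixed factor.

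The third step passes from the integral oscillation $E(z,r)$ to pointwise values. Since $u\in C^0(\Omega)$, the averages $(u)_{Q_r(z)}$ converge to $u(z)$, so telescoping the bound on $E(z,r)$ across dyadic scales gives $|u(z)-(u)_{Q_r(z)}|\lesssim$ the same right-hand side; applying this at $z=x$ and $z=y$ with $r\sim|x-y|$, and using that $Q_r(x)$ and $Q_r(y)$ overlap in a cube of comparable size so that $|(u)_{Q_r(x)}-(u)_{Q_r(y)}|$ is also controlled, yields \eqref{fracalphau}. Throughout, one must check that the $\alpha$-dependence of all constants is uniform for $\alpha\in[0,\bar\alpha]$, which holds because the only place $\alpha$ enters quantitatively is the geometric ratio $\lambda^{\alpha_0-\bar\alpha}$ chosen once and for all.

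The main obstacle is the comparison step in the genuinely singular regime $1<p\le 2-1/n$: here solutions need not lie in $W^{1,1}_{\rm loc}$, renormalized/SOLA solutions must be used, and the natural quantity to estimate is the sub-natural power $\fint|u-w|^{\kappa}$ rather than an energy norm. This is exactly where the precise comparison lemmas of \cite{HP1, HP3} — which I would invoke rather than reprove — are essential, and where the appearance of $\kappa=(p-1)^2/2$ originates; making sure the exponent bookkeeping in the iteration is consistent with the loss factor $\lambda^{-n/\kappa}$ (so that the geometric series still converges after the choice of $\lambda$) is the delicate point.
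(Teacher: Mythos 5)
Your plan follows essentially the same Campanato-type iteration that the paper uses: the De Giorgi decay of Lemma~\ref{osckkk} for the homogeneous comparison function $w$, the $u$-level comparison estimate of Remark~\ref{RemarkCOM} (equivalently \cite[Thm.~1.2]{HP3}), a one-step decay for the $L^\kappa$-oscillation, a dyadic iteration, and a final matching of $x$ and $y$ via the overlap $Q_\delta(y)\subset Q_{3\delta}(x)$ with $\delta\sim|x-y|_\infty$. The one substantive organizational difference is in the bridging from the pivot scale $\delta$ up to the fixed scale $R$: the paper invokes the separately proved sharp fractional maximal bound of Theorem~\ref{sharpmax} (whose proof passes through the gradient comparison Lemma~\ref{maininterior} and a Caccioppoli inequality), whereas you propose to iterate the $u$-oscillation decay directly from $R$ down to $\delta$. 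Both routes are valid; yours is more self-contained for this single theorem, while the paper's modular choice lets Theorem~\ref{sharpmax} be reused verbatim in the proofs of Theorems~\ref{upto1}--\ref{gradholdTHM}.

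Two bookkeeping points to be careful about. First, your one-step inequality hides the nonlinear term in Remark~\ref{RemarkCOM}: the comparison estimate for $\fint|u-w|^\kappa$ also contains $\frac{|\mu|(Q_{3r})}{r^{n-p}}\bigl(\fint|u-\lambda|^\kappa\bigr)^{(2-p)/\kappa}$, which must be absorbed by Young's inequality before the clean form $\lambda^{\alpha_0}E(z,r)+\lambda^{-n/\kappa}\bigl[\,|\mu|(Q_{2r})r^{p-n}\bigr]^{1/(p-1)}$ is available. Second, and more seriously for the claimed uniformity in $\alpha\in[0,\bar\alpha]$: in the telescoping step you should \emph{not} sum the closed-form decay estimate $E(z,r_j)\lesssim(r_j/R)^\alpha E(z,R)+\dots$ term by term, since the resulting geometric series $\sum_j\lambda^{j\alpha}$ carries a constant of order $(1-\lambda^\alpha)^{-1}$ that blows up as $\alpha\downarrow 0$. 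The paper sidesteps this by iterating the one-step inequality with a fixed, $\alpha$-independent contraction factor $\tfrac12$, summing the resulting $B_j$ with an absolute constant, and only then inserting the weight $\delta^\alpha$ once at the very end (using $\delta_j\le\delta$, as in the derivation of \eqref{qwolff}); your telescoping argument should be organized in the same way to keep the constant $\alpha$-free.
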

In \eqref{fracalphau}, the function ${\bf W}_{1-\alpha (p-1)/p ,p}^{R}(|\mu|)(\cdot)$ is a truncated Wolff's potential of $|\mu|$.  In general, given a nonnegative measure $\nu$ and 
$\rho>0$, the Wolff's potential ${\bf W}^{\rho}_{\alpha, s} \nu$, $\alpha>0, s>1$, is defined by 
$${\bf W}^{\rho}_{\alpha,s} (\nu)(x):=\int_{0}^{\rho} \left[\frac{\nu(Q_t(x))}{t^{n-\alpha s}}\right]^{\frac{1}{s-1}} \frac{dt}{t}, \qquad x\in\RR^n.$$ 
Note that 	${\bf W}^{\rho}_{\alpha, 2} (\nu)= {\bf I}^{\rho}_{2\alpha} (\nu)$, where ${\bf I}^{\rho}_{\gamma} (\nu)$, $\gamma >0$, is a truncated Riesz's potential defined by
$${\bf I}^{\rho}_{\gamma}( \nu)(x):=\int_{0}^{\rho} \frac{\nu(Q_t(x))}{t^{n-\gamma}} \frac{dt}{t}, \qquad x\in\RR^n.$$ 
\vspace{0.15cm}
\\
We remark  that, except for 	\eqref{condi1}--\eqref{condi2}, no further regularity assumption is needed in Theorem \ref{uptoalpha0}. However, this will force the constant $\bar{\alpha}$ to be small in general. 
	\vspace{0.15cm}
	\\
	On the other hand, it is possible to allow $\bar{\alpha}$ to be arbitrarily close to $1$ as long as we further impose a `small BMO' condition on the map
	$x\mapsto A(x,\xi)$. This condition entails the smallness of the limit $\limsup_{\rho\rightarrow 0} \om(\rho)$, where 
	\begin{equation}\label{omdef}
	\om(\rho):=\sup_{y\in\RR^n}\left[\fint_{Q_{r}(y)}\Upsilon(A,
	Q_{r}(y))(x)^{2} dx\right]^{\frac{1}{2}}, \qquad \rho >0,
	\end{equation}
	and for each cube $Q_r(y)$ we set
	\[
	\Upsilon(A,Q_r(y))(x) := \sup_{\xi \in \mathbb{R}^{n}\setminus \{0\}} \frac{|A({x, \xi}) - \overline{A}_{Q_r(y)}({\xi})|}{|\xi|^{p-1}},
	\]
	with $\overline{A}_{Q_r(y)}(\xi) = \fint_{Q_r(y)}A(x, \xi)dx$.
	The precise statement is as follows.
	\begin{theorem}  \label{upto1}
		Under \eqref{condi1}--\eqref{condi2}, with $1<p \leq 2- \frac{1}{n}$,  let  $\kappa$ be as in \eqref{kappaconst}, and suppose that  $u\in C^0(\Om)$ is a solution to \eqref{quasi-measure}. Let $Q_{R}(x_0)\subset\Om$. Then for any positive $\bar{\alpha}<1$
		there exists a small $\delta=\delta(n,p,\Lambda, \bar{\alpha})>0$ such that if 
		\begin{equation}\label{smallBMO}
	\limsup_{\rho\rightarrow 0} \om(\rho)\leq\delta,
		\end{equation}
		then 	 for any $x,y\in Q_{R/8}(x_0)\subset\Om$, we have 	 
		\begin{align}\label{fracalphau-new}
		|u(x)-u(y)| &\lesssim \left[{\bf W}_{1-\alpha (p-1)/p ,p}^{R}(|\mu|)(x) +{\bf W}_{1-\alpha (p-1)/p ,p}^{R}(|\mu|)(y)\right] |x-y|^\alpha\nonumber\\
		&\qquad  +\left(\fint_{Q_{R}(x_0)} |u|^\kappa dx\right)^{\frac{1}{\kappa}} \left(\frac{|x-y|}{R}\right)^{\alpha}
		\end{align}
		uniformly in $\alpha\in [0,\bar{\alpha}]$. Here the implicit constant depends on $n,p,\Lambda, \bar{\alpha},\om(\cdot)$, and ${\rm diam}(\Om)$.
	\end{theorem}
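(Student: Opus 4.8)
The plan is to run the comparison-and-iteration scheme of \cite{KM1}, now fed by the singular comparison estimates of \cite{HP1, HP3} and using the De Giorgi estimate Lemma \ref{osckkk} — and, as a crude large-scale input, Theorem \ref{uptoalpha0} — to supply the oscillation control of $u$ that is needed because $u$ may fail to lie in $W^{1,1}_{\rm loc}$. Fix a concentric cube $Q_r=Q_r(x_0)\subset\Om$ and introduce two comparison maps: $w\in u+W^{1,p}_0(Q_r)$ solving $\operatorname{div}(A(x,\nabla w))=0$ in $Q_r$, and $v\in w+W^{1,p}_0(Q_{r/2})$ solving $\operatorname{div}(\overline{A}_{Q_r}(\nabla v))=0$ in $Q_{r/2}$ with the averaged, hence $x$-independent, field $\overline{A}_{Q_r}(\xi)=\fint_{Q_r}A(x,\xi)\,dx$. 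The estimates of \cite{HP1, HP3} provide, first, a bound on $u-w$ — both in the weak average, $\big(\fint_{Q_r}|u-w|^{\kappa}\big)^{1/\kappa}\lesssim\big[|\mu|(Q_r)/r^{n-p}\big]^{1/(p-1)}$, and, in Kilpel\"ainen--Mal\'y form, pointwise by a truncated Wolff potential of $\mathbf 1_{Q_r}|\mu|$ — and, second, the frozen-coefficient estimate $\big(\fint_{Q_{r/2}}|w-v|^{\kappa}\big)^{1/\kappa}\lesssim\om(r)^{\sigma}\big(\fint_{Q_r}|w-(w)_{Q_r}|^{\kappa}\big)^{1/\kappa}$ for some $\sigma=\sigma(n,p)>0$. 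By the triangle inequality and the first comparison estimate, $\big(\fint_{Q_r}|w-(w)_{Q_r}|^{\kappa}\big)^{1/\kappa}\lesssim E(r)+\big[|\mu|(Q_r)/r^{n-p}\big]^{1/(p-1)}$, where $E(s):=\inf_{q\in\RR}\big(\fint_{Q_s(x_0)}|u-q|^{\kappa}\big)^{1/\kappa}$ (and $E_x$, $E_z$ denote the analogues centered at $x$, $z$).

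The decisive ingredient is that $v$, solving an $x$-independent equation satisfying \eqref{condi1}--\eqref{condi2}, is of class $C^{1,\beta_0}$: combining Lemma \ref{osckkkfornab} with the Caccioppoli inequality and the Moser/reverse-H\"older estimate for the translation-invariant equation gives $\|\nabla v\|_{L^\infty(Q_{r/4})}\lesssim r^{-1}\inf_q\big(\fint_{Q_{r/2}}|v-q|^{\kappa}\big)^{1/\kappa}$, hence the (super)linear excess decay $\inf_q\big(\fint_{Q_\rho}|v-q|^{\kappa}\big)^{1/\kappa}\lesssim(\rho/r)\inf_q\big(\fint_{Q_{r/2}}|v-q|^{\kappa}\big)^{1/\kappa}$ for $\rho\le r/4$, which dominates $(\rho/r)^{\bar\alpha}$ precisely because $\bar\alpha<1$. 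Feeding this and the two comparison estimates into $|u-q_v|\le|u-w|+|w-v|+|v-q_v|$ (with $q_v$ a near-optimal constant for $v$), and passing between the nested cubes $Q_\rho\subset Q_{r/2}\subset Q_r$ at the cost of a fixed power of $r/\rho$, one obtains, for $\rho=\tau r$ with $\tau\in(0,\tfrac14]$,
\[ E(\tau r)\ \le\ C_*\,\tau\,E(r)\ +\ C_*\,\tau^{-n/\kappa}\,\om(r)^{\sigma}\,E(r)\ +\ C_*\,\tau^{-n/\kappa}\,\big[|\mu|(Q_r)/r^{n-p}\big]^{1/(p-1)}. \]
First fix $\tau=\tau(n,p,\Lambda,\bar\alpha)$ with $C_*\tau\le\tfrac14\tau^{\bar\alpha}$, then fix $\delta=\delta(n,p,\Lambda,\bar\alpha)$ with $C_*\tau^{-n/\kappa}\delta^{\sigma}\le\tfrac14\tau^{\bar\alpha}$; by \eqref{smallBMO} there is $R_\delta>0$ with $\om(\rho)\le\delta$ for $\rho\le R_\delta$, and so $E(\tau r)\le\tfrac12\tau^{\bar\alpha}E(r)+C_*\tau^{-n/\kappa}\big[|\mu|(Q_r)/r^{n-p}\big]^{1/(p-1)}$ whenever $Q_r\subset\Om$ and $r\le R_\delta$.

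A standard iteration of this inequality along $r_k=\tau^k\rho_0$ (around any admissible center, $\rho_0\le R_\delta$), followed by summation of the geometric series and a discretization of the Wolff integral, controls $\sum_{k\ge0}E(r_k)$; combining this with Lemma \ref{osckkk} and the pointwise (Kilpel\"ainen--Mal\'y type) comparison of \cite{HP1, HP3} — which is what allows one to pass from these weak averaged excess functionals to the pointwise oscillation of the continuous function $u$ — bounds $|u(x)-(u)_{Q_{\rho_0}(x)}|$ by $E_x(\rho_0)+{\bf W}^{\rho_0}_{1,p}(|\mu|)(x)$. Above $R_\delta$ the same one-step inequality \emph{without} the decay gain (using only Lemma \ref{osckkk} and the $u$--$w$ comparison; alternatively Theorem \ref{uptoalpha0} with $\alpha=0$) loses only a constant $C=C({\rm diam}(\Om),R_\delta)$ in passing from $Q_R(x_0)$ down to scale $R_\delta$. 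Given $x,y\in Q_{R/8}(x_0)$, set $\rho:=2|x-y|\le R/4$ and split $|u(x)-u(y)|\le|u(x)-(u)_{Q_\rho(x)}|+|(u)_{Q_\rho(x)}-(u)_{Q_\rho(y)}|+|(u)_{Q_\rho(y)}-u(y)|$; bound the outer terms as above, the middle one by $E_x(2\rho)$, and $E_x(2\rho)$ by iterating from $Q_R(x_0)$ down to scale $\rho$ through $R_\delta$. The elementary kernel bounds ${\bf W}^{\rho}_{1,p}(|\mu|)(z)\le\rho^{\alpha}{\bf W}^{R}_{1-\alpha(p-1)/p,p}(|\mu|)(z)$, the term-by-term comparison for the tail sums (valid since $\bar\alpha>\alpha$ and the scales run over $[\rho,R_\delta]$), and $(\rho/R)^{\bar\alpha}\le(\rho/R)^{\alpha}$ then turn the resulting bound into precisely \eqref{fracalphau-new}, with a constant independent of $\alpha\in[0,\bar\alpha]$ and with the stated dependencies — the passage across $R_\delta$ being what brings in ${\rm diam}(\Om)$ and $\om(\cdot)$. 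The complementary regime $|x-y|>R_\delta/2$, in which $R/|x-y|\le2\,{\rm diam}(\Om)/R_\delta$ is bounded, follows directly from Theorem \ref{uptoalpha0} with $\alpha=0$ and the same kernel bounds.

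I expect the main obstacle to be the interplay between the singular comparison machinery of \cite{HP1, HP3}, which must be handled in the very weak topologies forced by $1<p\le2-\tfrac1n$ (where $u\notin W^{1,1}_{\rm loc}$ in general), and the pointwise/oscillation control of $u$ needed to telescope the excess — exactly where Lemma \ref{osckkk}, the Kilpel\"ainen--Mal\'y sup-comparison, and Theorem \ref{uptoalpha0} enter — together with the bookkeeping that makes the measure contributions reassemble into ${\bf W}^{R}_{1-\alpha(p-1)/p,p}(|\mu|)$ with an $\alpha$-uniform constant, bearing in mind that the coefficient-freezing gain is available only below the a priori uncontrolled scale $R_\delta$.
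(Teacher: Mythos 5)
Your route is genuinely different from the paper's and, modulo some detail, workable. The paper does \emph{not} run a telescoping excess-decay iteration for $u$ here; instead it first proves the sharp fractional maximal function bound Theorem~\ref{smallBMOTHM} (the iteration, with Lemma~\ref{vwsigma0}, is carried out there for $\nabla u$ in the averaged $L^\kappa$ sense, producing the bound ${\bf M}^{\#,2R}_{\alpha,\kappa}(u)(x)+{\bf M}^{3R}_{1-\alpha,\kappa}(\nabla u)(x)\lesssim[{\bf M}^{3R}_{p-\alpha(p-1)}(\mu)(x)]^{1/(p-1)}+\cdots$), then converts it into an oscillation estimate for $u$ with the DeVore--Sharpley inequality $|f(x)-f(y)|\le(c/\alpha)|x-y|^\alpha[{\bf M}^{\#,R/2}_{\alpha,\kappa}(f)(x)+{\bf M}^{\#,R/2}_{\alpha,\kappa}(f)(y)]$; since the DS constant blows up as $\alpha\downarrow 0$, the paper also invokes Theorem~\ref{uptoalpha0} to cover $\alpha\in[0,\alpha_0/2]$. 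Your proof instead builds the Wolff potential directly from the one-step excess inequality for $E(r)=\inf_q(\fint_{Q_r}|u-q|^\kappa)^{1/\kappa}$, using the frozen-coefficient map $v$ together with its $C^{1,\beta_0}$ regularity to get a \emph{linear} (hence $\tau^{\bar\alpha}$-beating) decay for the $v$-excess -- closer in spirit to the paper's proof of Theorem~\ref{uptoalpha0}, which, however, only uses $w$ and the $C^{\alpha_0}$ estimate of Lemma~\ref{osckkk} and so cannot reach all $\bar\alpha<1$. The trade-off: the paper's factoring through Theorem~\ref{smallBMOTHM} yields a reusable maximal-function bound and keeps the $\alpha$-bookkeeping cleaner, while your direct iteration is more self-contained and automatically $\alpha$-uniform without needing DS.

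A few points in your write-up should be tightened. (i) The $u$--$w$ comparison in the weak $L^\kappa$ topology (Remark~\ref{RemarkCOM}) has an additional nonlinear term $\frac{|\mu|(Q_{3r})}{r^{n-p}}\big(\fint_{Q_{3r}}|u-\lambda|^\kappa\big)^{(2-p)/\kappa}$ that you omit; it must be absorbed by Young's inequality before the iteration closes. (ii) Your $w$--$v$ estimate in $L^\kappa$ is not what Lemma~\ref{vwsigma0} states (that is a gradient bound); you need to pass from $\nabla(v-w)$ in $L^p$ to $v-w$ in $L^\kappa$ via Sobolev (using $v-w\in W^{1,p}_0$), and from $\|\nabla w\|_{L^p}$ back to the $\kappa$-excess of $w$ via Caccioppoli and De Giorgi's sup-bound for the homogeneous equation -- all feasible, but none of it is immediate, and for $\kappa<1$ the quasi-triangle manipulations need care. (iii) Since $u\in C^0(\Omega)$ is assumed, you do not need any pointwise Kilpel\"ainen--Mal\'y sup-comparison to pass from the averaged excess to $u(x)$: as in the paper's proof of Theorem~\ref{uptoalpha0}, summability of $E(r_k)$ makes the optimal constants $q_k$ Cauchy, and continuity identifies the limit with $u(x)$. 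Your identification of this as the ``decisive obstacle'' is therefore somewhat misdirected.
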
  
	Under a certain Dini-VMO condition, we could also allow $\bar{\alpha}=1$ in the above theorem. However, in this case the Wolff's potential is replaced with a Riesz's potential 
	raised to the power of $\frac{1}{p-1}$.
	
	\begin{theorem}\label{0toi1}  Under \eqref{condi1}--\eqref{condi2}, with $1<p \leq 2- \frac{1}{n}$,  let  $\kappa$ be as in \eqref{kappaconst}, and suppose that  $u\in C^1(\Om)$ is a solution to \eqref{quasi-measure}. Let $Q_{R}(x_0)\subset\Om$. If for some $\sigma_1\in (0,1)$ such that $\om(\cdot)^{\sigma_1}$ is Dini-VMO, i.e., 
		\begin{equation}\label{Dini-VMO}
		\int_{0}^{1} \om(\rho)^{\sigma_1} \frac{d\rho}{\rho}<+\infty,
		\end{equation}
		then  for any $x,y\in Q_{R/8}(x_0)\subset\Om$, we have 	 
		\begin{align*}%\label{fracalphau}
		&|u(x)-u(y)| \\
		&\quad \lesssim \left[\left({\bf I}_{p-\alpha (p-1)}^{R}(|\mu|)(x)\right)^{\frac{1}{p-1}} + \left({\bf I}_{p-\alpha (p-1)}^{R}(|\mu|)(y)\right)^{\frac{1}{p-1}}\right] |x-y|^\alpha\nonumber\\
		&\quad  +\left(\fint_{Q_{R}(x_0)} |u|^\kappa dx\right)^{\frac{1}{\kappa}} \left(\frac{|x-y|}{R}\right)^{\alpha}
		\end{align*}
		uniformly in $\alpha \in [0, 1]$. Here the implicit constant depends on $n,p,\Lambda, \bar{\alpha}, \sigma_1, \om(\cdot)$, and ${\rm diam}(\Om)$.
	\end{theorem}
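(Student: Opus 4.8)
The plan is to run the comparison-plus-iteration machinery of \cite{KM1}, adapted to the singular range by means of the comparison estimates of \cite{HP1, HP3} and organized so that, at the Lipschitz endpoint, the measure term accumulates as a truncated Riesz potential. First, since the comparison maps depend additively on the datum, one reduces to a nonnegative measure $\mu$, and by translation and scaling it suffices to work on the family of concentric dyadic cubes $Q_{r_j}(x)$, $r_j=\theta^jR$, about a fixed point $x\in Q_{R/8}(x_0)$, where the dilation ratio $\theta=\theta(n,p,\Lambda)\in(0,1)$ is chosen below. Because the target range includes $\alpha=1$ and $u\in C^1(\Om)$, one iterates a low-exponent \emph{gradient excess}
\[
\Phi(x,r):=\inf_{\mathbf q\in\RR^n}\left(\fint_{Q_r(x)}|\nabla u-\mathbf q|^{\gamma}\,dx\right)^{1/\gamma},
\]
with $\gamma\in(0,1)$ small and fixed (chosen compatibly with the De Giorgi and $C^{1,\beta_0}$ theories of Lemmas \ref{osckkk}--\ref{osckkkfornab} and with the comparison estimates); alongside it one carries $\big(\fint_{Q_r(x)}|u|^\kappa\big)^{1/\kappa}$, which feeds the initialization of the iteration at the top scale and is the source of the last term in the statement.

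On a cube $Q_r=Q_r(x)$ one performs the standard two-step comparison: let $w\in u+W^{1,p}_0(Q_r)$ solve ${\rm div}\,(A(x,\nabla w))=0$ in $Q_r$, and let $v\in w+W^{1,p}_0(Q_r)$ solve ${\rm div}\,(\bar A_{Q_r}(\nabla v))=0$ in $Q_r$, where $\bar A_{Q_r}(\xi)=\fint_{Q_r}A(\cdot,\xi)\,dx$. The three ingredients feeding the iteration are: (i) the comparison estimates of \cite{HP1, HP3}, valid for $1<p\le 2-1/n$, which bound $\big(\fint_{Q_r}|\nabla u-\nabla w|^\gamma\big)^{1/\gamma}$ in terms of (a power of) $|\mu|(Q_r)/r^{n-1}$, up to terms controlled by the running size of $\nabla u$; (ii) a coefficient-oscillation comparison bounding $\big(\fint_{Q_r}|\nabla w-\nabla v|^\gamma\big)^{1/\gamma}$ by a power of $\om(r)$ times $\Phi(x,r)$ plus lower-order terms, through $\Upsilon(A,Q_r)$ and \eqref{omdef}; and (iii) the $C^{1,\beta_0}$ decay of Lemma \ref{osckkkfornab} applied to $v$. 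Combining (i)--(iii) and absorbing the comparison errors into $\Phi$ produces an excess-decay inequality of the schematic form
\[
\Phi(x,\theta r)\ \le\ C\,\theta^{\beta_0}\,\Phi(x,r)\ +\ C\,\om(r)^{\sigma_1}\big(\Phi(x,r)+\text{l.o.t.}\big)\ +\ (\text{measure term at scale }r).
\]

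One then fixes $\theta$ with $C\theta^{\beta_0}\le\tfrac14$ and iterates over $j\ge0$. Hypothesis \eqref{Dini-VMO} gives $\sum_j\om(r_j)^{\sigma_1}<\infty$, so the $\om$-term is absorbable at small scales, the iteration closes, and summing as in \cite{KM1}, with the scales truncated at a level $\rho$, controls $\Phi(x,\rho)$ by (schematically) a partial Riesz potential of the shape $\rho^{\alpha-1}\big[{\bf I}^\rho_{p-\alpha(p-1)}(|\mu|)(x)\big]^{1/(p-1)}$ plus a power of $\rho/R$ times $\big(\fint_{Q_R(x_0)}|u|^\kappa\big)^{1/\kappa}$, \emph{uniformly} in $\alpha\in[0,1]$; the crucial point is that the measure density $|\mu|(Q_t)/t^{n-1}$ is raised to the power $1/(p-1)\ge1$ only after being accumulated over scales---via an exit-time argument in the spirit of Duzaar--Mingione---which is why a Riesz potential raised to $\tfrac1{p-1}$, rather than the pointwise smaller Wolff potential of Theorem \ref{upto1}, is what comes out, and why reaching $\alpha=1$ requires the full strength of the Dini-VMO condition. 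The pointwise oscillation is then recovered by telescoping: with $d=|x-y|$ one writes $|u(x)-u(y)|\le|u(x)-(u)_{Q_d(x)}|+|(u)_{Q_d(x)}-(u)_{Q_d(y)}|+|(u)_{Q_d(y)}-u(y)|$, bounds the outer terms by $\sum_{r_j\le d}r_j\,\Phi(x,r_j)$ (respectively at $y$) via Poincar\'e-type inequalities and the middle term by $d$ times $\Phi$ at a scale comparable to $d$, inserts the excess bounds, and uses the monotonicity of the truncated potentials in their radius; the sums collapse to $|x-y|^\alpha$ times $\big[{\bf I}^R_{p-\alpha(p-1)}(|\mu|)(x)\big]^{1/(p-1)}+\big[{\bf I}^R_{p-\alpha(p-1)}(|\mu|)(y)\big]^{1/(p-1)}$ plus $(|x-y|/R)^\alpha\big(\fint_{Q_R(x_0)}|u|^\kappa\big)^{1/\kappa}$, which is the asserted bound.

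The main obstacle is ingredients (i)--(ii) in the genuinely singular regime: unlike the case $p>2-1/n$ handled in \cite{KM1}, the comparison estimates are available only in sublinear averaged norms and carry the gradient size with a positive exponent, so one must choose $\gamma$ small enough to be simultaneously compatible with the reverse-H\"older inequalities behind Lemmas \ref{osckkk}--\ref{osckkkfornab}, with the bounds of \cite{HP1, HP3}, and with $\kappa$ from \eqref{kappaconst}; one must carry the \emph{gradient} excess (and not an oscillation of $u$) through the whole iteration, which is precisely why the stronger hypothesis $u\in C^1(\Om)$ is assumed; and one must perform the exit-time absorption that converts the accumulated measure contributions into a linear Riesz potential raised to $\tfrac1{p-1}$ while keeping all constants uniform in $\alpha$ and while the coefficient errors $\om(r_j)^{\sigma_1}$ are being summed against the Dini integral \eqref{Dini-VMO}. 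The remaining steps---reduction to $\mu\ge0$, the energy/Poincar\'e initialization at the top scale, the telescoping from $\Phi$ to $|u(x)-u(y)|$, and the uniformity in $\alpha$---follow \cite{KM1} with only notational changes, although the low exponents demand some additional care.
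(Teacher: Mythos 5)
Your outline captures the paper's essential ingredients: the two-step comparison to $w$ (homogeneous, variable coefficients) and to $v$ (frozen coefficients $\overline{A}_{Q_r}$), the iteration of a low-exponent gradient excess $A_j=\bigl(\fint_{Q_j}|\nabla u-{\bf q}_j|^{\kappa}\bigr)^{1/\kappa}$ on concentric cubes $Q_j=Q_{\epsilon^jR}(x)$, the $C^{1,\beta_0}$ decay of Lemma~\ref{osckkkfornab}, and the absorption of the coefficient oscillations through \eqref{Dini-VMO}. The paper organizes this differently: it first proves the pointwise fractional maximal function bound of Theorem~\ref{dinimax}, and then converts it to the oscillation estimate via the DeVore--Sharpley inequality together with a Caccioppoli bound (indeed, the paper's proof of Theorem~\ref{0toi1} is literally the proof of Theorem~\ref{upto1} with Theorem~\ref{dinimax} replacing Theorem~\ref{smallBMOTHM}), whereas you telescope directly; but the underlying mechanism is the same.

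Two points need correcting. First, no exit-time/stopping-time argument is used to produce the Riesz potential raised to $\tfrac{1}{p-1}$. The iteration yields two kinds of measure contributions: a direct sum $\sum_j\bigl[|\mu|(Q_j)/R_j^{\,n-p+\alpha(p-1)}\bigr]^{1/(p-1)}$, which is a Wolff potential and hence dominated by $\bigl[{\bf I}^{2R}_{p-\alpha(p-1)}(|\mu|)(x)\bigr]^{1/(p-1)}$ by superadditivity of $t\mapsto t^{1/(p-1)}$ (here $\tfrac{1}{p-1}\ge 1$ since $p\le 2$); and an interaction sum $\sum_j\frac{|\mu|(Q_j)}{R_j^{\,n-p+\alpha(p-1)}}\,R_j^{(1-\alpha)(2-p)}\bigl(\fint_{Q_j}|\nabla u|^{\kappa}\bigr)^{(2-p)/\kappa}\lesssim{\bf I}^{2R}_{p-\alpha(p-1)}(|\mu|)(x)\,\bigl[{\bf M}^{R}_{1-\alpha,\kappa}(\nabla u)(x)\bigr]^{2-p}$, which is disposed of by Young's inequality at the very end of the induction. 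Second, and more substantively, you have not addressed uniformity of the constant as $\alpha\to 0$. Your telescoping sum $\sum_{j\ge 0}(\theta^jd)^{\alpha}\sim d^{\alpha}/(1-\theta^{\alpha})$, exactly like the DeVore--Sharpley inequality, carries a factor of order $1/\alpha$. The paper handles this by observing that for $\alpha\in[0,\alpha_0/2]$ Theorem~\ref{uptoalpha0} already gives the result, since the Wolff bound there is $\le\bigl[{\bf I}^R_{p-\alpha(p-1)}(|\mu|)\bigr]^{1/(p-1)}$ when $p\le 2$, and the maximal-function argument is run only on $\alpha\in[\alpha_0/2,1]$. Without this split, your constants are not uniform on $[0,1]$.
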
  
	We remark that, when $\alpha=1$, Theorem \ref{0toi1} recovers the pointwise gradient estimates of \cite{DZ} and \cite{HP3}  that were obtained under a slightly different Dini condition.
	\vspace{0.15cm}
	\\
	Finally, under a stronger Dini-H\"older condition we can also bound solution gradients in  appropriate Calder\'on spaces.
	
	\begin{theorem}\label{gradholdTHM} Under \eqref{condi1}--\eqref{condi2}, with $1<p \leq 2- \frac{1}{n}$,  let  $\kappa$ be as in \eqref{kappaconst}, and suppose that $u\in C^1(\Om)$ is a solution to \eqref{quasi-measure}. Let $Q_{R}(x_0)\subset\Om$. If for some $\sigma_1\in (0,1)$ such that
	$\om(\cdot)^{\sigma_1}$ is 	Dini-H\"older of order $\bar{\alpha}$, i.e.,
		\begin{equation}\label{dhcond}
		  \int_{0}^1\frac{\om(\rho)^{\sigma_1}}{\rho^{\bar{\alpha}}} \frac{d\rho}{\rho} <+\infty
		\end{equation}
		for some $\bar{\alpha}\in [0,\beta_0)$, 
		then for any $x,y\in Q_{R/4}(x_0)\subset\Om$, we have 	 
		\begin{align*}%\label{fracalphau}
		|\nabla u(x)- \nabla u(y)|  &\lesssim \left[\left({\bf I}_{1-\alpha}^{R}(|\mu|)(x)\right)^{\frac{1}{p-1}} + \left({\bf I}_{1-\alpha}^{R}(|\mu|)(y)\right)^{\frac{1}{p-1}}\right] |x-y|^\alpha\nonumber\\
		&\qquad \quad  +\left(\fint_{Q_{R}(x_0)} |\nabla u|^\kappa dx\right)^{\frac{1}{\kappa}} \left(\frac{|x-y|}{R}\right)^{\alpha}
		\end{align*}
		uniformly in $\alpha \in [0, \bar{\alpha}]$. Here $\beta_0$ is as  in Lemma \ref{osckkkfornab}, and  the implicit constant depends on $n,p,\Lambda, \bar{\alpha}, \sigma_1, \om(\cdot)$, and ${\rm diam}(\Om)$. 
	\end{theorem}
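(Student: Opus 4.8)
Since $u\in C^1(\Om)$ the field $\nabla u$ is continuous, so every $z\in Q_{R/4}(x_0)$ is a Lebesgue point of $\nabla u$ with $\nabla u(z)=\lim_{r\to0}(\nabla u)_{Q_r(z)}$. The plan is to prove a single pointwise \emph{gradient excess decay}: for such a $z$ and all $0<r\le R/2$,
\begin{equation*}
E_r(z):=\inf_{\mathbf q\in\RR^n}\Big(\fint_{Q_r(z)}|\nabla u-\mathbf q|^{\kappa}\Big)^{1/\kappa}\ \lesssim\ r^{\alpha}\,\Psi(z),\qquad \Psi(z):=\big({\bf I}_{1-\alpha}^{R}(|\mu|)(z)\big)^{\frac{1}{p-1}}+R^{-\alpha}\Big(\fint_{Q_{R}(x_0)}|\nabla u|^{\kappa}\Big)^{1/\kappa},
\end{equation*}
uniformly in $\alpha\in[0,\bar\alpha]$, where $\kappa=(p-1)^2/2<1$ is the sub-unit gradient exponent dictated by the singular range. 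Granting this, telescoping over the scales $\lambda^j r$ gives $|\nabla u(z)-(\nabla u)_{Q_\rho(z)}|\lesssim\rho^\alpha\Psi(z)$ for $\rho\le R/2$, and the two-point bound follows by the triangle inequality: for $x,y\in Q_{R/4}(x_0)$ with $d:=|x-y|$ small one uses the cube $Q_{2d}(x)$ (which contains $y$ and lies inside $Q_{4d}(y)$), while for $d$ comparable to $R$ one bounds $|\nabla u(x)|$ and $|\nabla u(y)|$ separately from the pointwise decay at scale $R/2$ and the base estimate recorded below.

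To obtain the excess decay I would run on each cube $Q_r=Q_r(z)$, $r\le R/2$, the usual two-step comparison. First let $w\in W^{1,p}(Q_r)$ solve $\operatorname{div}(A(x,\nabla w))=0$ in $Q_r$ with $w=u$ on $\partial Q_r$; the comparison estimates of \cite{HP1,HP3}, valid precisely for $1<p\le2-\tfrac{1}{n}$ and formulated in the $L^\kappa$-scale, give $(\fint_{Q_r}|\nabla u-\nabla w|^{\kappa})^{1/\kappa}\lesssim[|\mu|(Q_r)/r^{\,n-1}]^{1/(p-1)}$. Next let $v$ solve the frozen equation $\operatorname{div}(\overline{A}_{Q_r}(\nabla v))=0$ in $Q_{r/2}$ with $v=w$ on $\partial Q_{r/2}$; testing the two equations against $w-v$ and using $|A(x,\xi)-\overline{A}_{Q_r}(\xi)|\le\Upsilon(A,Q_r)(x)|\xi|^{p-1}$ with \eqref{omdef} yields $(\fint_{Q_{r/2}}|\nabla w-\nabla v|^\kappa)^{1/\kappa}\lesssim\om(r)^{\sigma}(\fint_{Q_r}|\nabla u|^\kappa)^{1/\kappa}+[|\mu|(Q_r)/r^{n-1}]^{1/(p-1)}$ for a dimensional exponent $\sigma\in(0,1)$; we may assume $\sigma_1\le\sigma$ (the role of $\sigma_1$ in the hypothesis being precisely to sit below this structural threshold), so that $\om(r)^\sigma\le\om(r)^{\sigma_1}$. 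Since $\overline{A}_{Q_r}$ is $x$-independent, Lemma~\ref{osckkkfornab} applies to $v$ with $\ep=\kappa$, and combining the three facts produces the one-scale inequality
\begin{equation}\label{onescaleprop}
E_{\lambda r}(z)\ \le\ C\lambda^{\beta_0}E_r(z)+C\Big[\frac{|\mu|(Q_r(z))}{r^{\,n-1}}\Big]^{\frac{1}{p-1}}+C\,\om(r)^{\sigma_1}\Big(\fint_{Q_r(z)}|\nabla u|^{\kappa}\Big)^{1/\kappa}.
\end{equation}

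Fix $\lambda\in(0,\tfrac{1}{4})$ so small that $C\lambda^{\beta_0-\bar\alpha}\le\tfrac{1}{2}$ (possible as $\bar\alpha<\beta_0$), put $r_j=\lambda^j(R/2)$, and iterate \eqref{onescaleprop}. The last term forces one to also carry the gradient averages $A_j:=|(\nabla u)_{Q_{r_j}(z)}|$, for which $A_{j+1}\le A_j+CE_{r_j}(z)$, hence $(\fint_{Q_{r_j}}|\nabla u|^\kappa)^{1/\kappa}\le E_{r_j}(z)+A_j\le A_0+C\sum_{k\le j}E_{r_k}(z)$; the base $A_0=|(\nabla u)_{Q_{R/2}(z)}|$ is controlled by $(\fint_{Q_R(x_0)}|\nabla u|^\kappa)^{1/\kappa}$ plus a truncated potential of $|\mu|$ by the self-improving (reverse-H\"older type) gradient estimate available in this range \cite{HP1,HP3}. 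Normalizing \eqref{onescaleprop} by $r_j^{-\alpha}$ and summing in $j$: the homogeneous part telescopes to $\lesssim r^\alpha R^{-\alpha}E_{R/2}(z)\lesssim r^\alpha R^{-\alpha}(\fint_{Q_R(x_0)}|\nabla u|^\kappa)^{1/\kappa}$ (taking $\mathbf q=0$ in the infimum defining $E_{R/2}(z)$); the measure terms assemble, via the super-additivity of $t\mapsto t^{1/(p-1)}$ (legitimate since $1/(p-1)\ge1$ as $p\le2$) and comparison with an integral, into $\lesssim r^\alpha({\bf I}_{1-\alpha}^{R}(|\mu|)(z))^{1/(p-1)}$; and the freezing errors are summable because $\sum_j r_j^{-\alpha}\om(r_j)^{\sigma_1}\lesssim\int_0^1\rho^{-\bar\alpha}\om(\rho)^{\sigma_1}\tfrac{d\rho}{\rho}<\infty$ by \eqref{dhcond}, uniformly for $\alpha\le\bar\alpha$, the coupling with the running excess being closed by a discrete Gronwall argument in which Young's inequality splits $\om(r)^{\sigma_1}E_r$ into an absorbable small multiple of $E_r$ plus a summable remainder. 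This gives $\sup_j r_j^{-\alpha}E_{r_j}(z)\lesssim\Psi(z)$ uniformly in $\alpha\in[0,\bar\alpha]$, hence the excess decay, and the theorem follows.

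The main obstacle is this iteration step. Because the desired right-hand side carries the \emph{weak} $L^\kappa$-average of $\nabla u$ over the \emph{fixed} cube $Q_R(x_0)$ --- not $\|\nabla u\|_{L^\infty}$ or $\fint_{Q_R}|\nabla u|$, either of which would trivialize the accounting --- one is forced to track the gradient averages $A_j$, which genuinely blow up near singularities of $\mu$, and to close the loop between them, the running excess, and the perturbations $\om(r_j)^{\sigma_1}$; the two levers that make the discrete Gronwall converge \emph{uniformly in $\alpha$} are the strict gap $\beta_0-\bar\alpha>0$, which supplies a geometric decay beating $\lambda^{-j\alpha}$, and the Dini-H\"older summability \eqref{dhcond} of $\om^{\sigma_1}$ against the weight $\rho^{-\bar\alpha}$. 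All the inputs are already in place: the measure comparison and the self-improving estimate are the quantitative versions of \cite{HP1,HP3} adapted to $1<p\le2-\tfrac{1}{n}$, and the frozen-coefficient $C^{1,\beta_0}$ decay is Lemma~\ref{osckkkfornab} with $\ep=\kappa$; what remains is careful but routine summation.
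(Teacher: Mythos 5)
Your proposal follows the same architecture as the paper's proof: run a one-scale $L^{\kappa}$-excess decay for $\nabla u$ (via the two-step comparison $u\to w\to v$, Lemma~\ref{maininterior}, Lemma~\ref{Dini-VMOlem}, and the $C^{1,\beta_0}$ decay of Lemma~\ref{osckkkfornab}), then iterate dyadically, with the Dini-H\"older condition \eqref{dhcond} providing summability of the freezing errors and the gap $\beta_0-\bar\alpha>0$ providing a contractive factor uniform in $\alpha$. This is exactly what the paper does (split between Theorems~\ref{dinimax}, \ref{supdnvthm} and the final proof). However, two of your bookkeeping devices do not survive the singular exponent $\kappa=(p-1)^2/2<1$ and leave a genuine gap.

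First, you carry the arithmetic gradient averages $A_j=|(\nabla u)_{Q_{r_j}(z)}|$ and assert $A_{j+1}\le A_j+CE_{r_j}$ and $\big(\fint_{Q_{r_j}}|\nabla u|^\kappa\big)^{1/\kappa}\le E_{r_j}+A_j$. Neither holds for $\kappa<1$: the excess $E_r$ is an $L^\kappa$-quantity, and $\fint|f|$ is \emph{not} dominated by $\big(\fint|f|^\kappa\big)^{1/\kappa}$, so the arithmetic mean is not controlled by the $L^\kappa$-data your iteration produces. The paper replaces the arithmetic means by the $L^\kappa$-minimizers ${\bf q}_{Q_\rho(x)}$ and uses precisely the quasi-triangle inequalities \eqref{Qronly}--\eqref{Qrhor} (which exploit that ${\bf q}_j$ is constant on the cube, so $|{\bf q}_{j+1}-{\bf q}_j|=\big(\fint|{\bf q}_{j+1}-{\bf q}_j|^\kappa\big)^{1/\kappa}$) to make the telescope rigorous; the same fix should be applied in your argument. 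Second, your one-scale inequality \eqref{onescaleprop} omits the product term $\big(|\mu|(Q_{3r})/r^{n-1}\big)\big(\fint_{Q_{3r}}|\nabla u|^\kappa\big)^{(2-p)/\kappa}$ appearing in Lemma~\ref{maininterior}, which is structural in the range $p<2$ and cannot simply be dissolved by Young's inequality inside the iteration without spoiling the contraction (it contributes a non-small multiple of the gradient average). The paper decouples this by first proving Theorem~\ref{dinimax} with $\alpha=1$ — an a priori, $j$-uniform bound $\big(\fint_{Q_j}|\nabla u|^\kappa\big)^{1/\kappa}\lesssim M(x,\delta)=\big[{\bf I}_1^{\delta}(|\mu|)(x)\big]^{1/(p-1)}+\big(\fint_{Q_\delta}|\nabla u|^\kappa\big)^{1/\kappa}$ — and only then closes the Gronwall loop for the excess; your sketch gestures at this but attributes it to a "reverse-H\"older/self-improving" estimate, which by itself does not yield the needed pointwise potential bound. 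With the minimizer replacement and the a priori gradient bound made explicit (and a routine pass from ${\bf I}_{1-\alpha(p-1)}$ to ${\bf I}_{1-\alpha}$, absorbing the factor $r_j^{\alpha(2-p)}\lesssim_{\operatorname{diam}\Omega}1$), your proof would match the paper's.
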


	\section{Comparison and Poincar\'e type inequalities}
	
	The study of  regularity problems for equation \eqref{quasi-measure} is based on the following comparison
	estimate that connects the solution 
	of measure datum problem to a solution of a homogeneous problem. 
	\vspace{0.15cm}\\
	To describe it,  we let  $u\in W_{\rm loc}^{1,p}(\Omega)$ be a solution of \eqref{quasi-measure}.
	Then for  a cube $Q_{2R}=Q_{2R}(x_0) \Subset\Omega$, we   consider the  unique solution $w\in W_0^{1,p}(Q_{2R}(x_0))+u$ to the local interior problem

	\begin{equation}\label{eq1}
	\left\{ \begin{array}{rcl}
	- \operatorname{div}\left( {A(x,\nabla w)} \right) &=& 0 \quad \text{in} \quad Q_{2R}(x_0), \\ 
	w &=& u\quad \text{on} ~~ \partial Q_{2R}(x_0).  
	\end{array} \right.
	\end{equation}

	\begin{lemma}\label{maininterior} Suppose that  $Q_{3 R}(x_0)\subset\Om$ for some $R>0$. Let $u$ and $w$ be as in \eqref{eq1} and let  $\kappa$ be as in \eqref{kappaconst}, where $1<p \leq 2-\frac{1}{n}$. Then it holds that 
		\begin{align}\label{compgradu}
		\left(	\fint_{Q_{2R}(x_0)}|\nabla (u-w)|^{\ka} dx\right)^{\frac{1}{\kappa}} & \lesssim \left(\frac{|\mu|(Q_{3 R}(x_0))}{R^{n-1}} \right)^{\frac{1}{p-1}} \nonumber\\
		&\qquad  + \frac{|\mu|(Q_{3 R}(x_0))}{R^{n-1}}	
		\left(	\fint_{Q_{3 R}(x_0)}|\nabla u|^{\ka} dx \right)^{\frac{2-p}{\kappa}}.
		\end{align} 
	\end{lemma}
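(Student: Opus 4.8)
The plan is to prove \eqref{compgradu} by a comparison/energy argument adapted to the singular range $1<p\le 2-\tfrac1n$, where $\kappa=(p-1)^2/2<1$ is the natural sub-critical exponent at which $|\nabla u|$ lives (recall distributional solutions need not be in $W^{1,1}_{\rm loc}$). The starting point is the standard energy estimate obtained by testing the equation for $v:=u-w$ against an appropriate truncation or Lipschitz-cutoff of $v$: subtracting the equations for $u$ and $w$ and exploiting the monotonicity \eqref{condi2} yields a bound for an integral of $(|\nabla u|^2+|\nabla w|^2)^{(p-2)/2}|\nabla(u-w)|^2$ over $Q_{2R}$ against $|\mu|(Q_{3R})$ times a sup/oscillation of $v$. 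Since $p<2$, the weight $(|\nabla u|^2+|\nabla w|^2)^{(p-2)/2}$ is singular where the gradients are large, so one cannot directly estimate $\int|\nabla v|^{\kappa}$; instead one introduces the truncation levels and sums a geometric series, exactly in the spirit of the comparison estimates of \cite{HP1,HP3}. In fact the cleanest route is simply to \emph{cite} \cite[comparison estimates]{HP1,HP3}: those papers already establish, for $1<p\le 2-\tfrac1n$, an inequality of the form
\[
\fint_{Q_{2R}}|\nabla(u-w)|^{\kappa}\,dx \lesssim \Big(\tfrac{|\mu|(Q_{3R})}{R^{n-1}}\Big)^{\frac{\kappa}{p-1}} + \Big[\tfrac{|\mu|(Q_{3R})}{R^{n-1}}\Big]^{\kappa}\Big(\fint_{Q_{3R}}|\nabla u|^{\kappa}\Big)^{\frac{(2-p)}{?}},
\]
and the present lemma is essentially a restatement; so the proof would consist of invoking those results and checking that the normalization matches.

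If one instead wants a self-contained argument, the key steps, in order, are: (i) derive the basic comparison energy inequality by testing with $T_k(u-w)$ (truncation at height $k$) and using \eqref{condi1}--\eqref{condi2}, obtaining control of $\int_{\{|u-w|<k\}}(|\nabla u|+|\nabla w|)^{p-2}|\nabla(u-w)|^2$ by $k\,|\mu|(Q_{3R})$; (ii) use the a priori bound on the measure of the super-level sets of $|\nabla(u-w)|$ — the Marcinkiewicz-type estimate $|\{|\nabla(u-w)|>\lambda\}\cap Q_{2R}| \lesssim |Q_{2R}|\,(\,|\mu|(Q_{3R})/R^{n-1}\,)^{?}\lambda^{-?}$ valid precisely because $p>2-\tfrac1n$ would fail but the weak-type bound for the singular case is available from \cite{HP1} — actually one uses the refined estimate that splits into the region where $|\nabla u|$ is comparable to $|\nabla w|$; (iii) interpolate: write $|\nabla(u-w)|^{\kappa}$ using Hölder with exponents tuned so that the singular weight $(|\nabla u|+|\nabla w|)^{(p-2)/2}$ is absorbed, which is where the exponent $\kappa=(p-1)^2/2$ and the power $\frac{2-p}{\kappa}$ on the $\fint|\nabla u|^{\kappa}$ term get produced; (iv) finally bound $\fint|\nabla w|^{\kappa}$ by $\fint|\nabla u|^{\kappa}$ up to the measure term, using that $w$ minimizes the energy with boundary data $u$ together with a reverse-Hölder/Caccioppoli argument on $Q_{3R}$.

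The main obstacle is step (iii): the integrand $|\nabla(u-w)|^{\kappa}$ is estimated only after inserting the singular weight $(|\nabla u|^2+|\nabla w|^2)^{(p-2)/2}$ raised to a suitable power, and making Hölder's inequality close requires that the conjugate exponent applied to the weight times a power of $(|\nabla u|^2+|\nabla w|^2)$ give back exactly a finite power of $\fint_{Q_{3R}}|\nabla u|^{\kappa}$ — this forces a delicate bookkeeping of exponents, and is the reason the estimate has the peculiar two-term, exponent-$\frac{2-p}{\kappa}$ structure rather than the single homogeneous term familiar from the case $p\ge 2-\tfrac1n$. One must also be careful that $w$ is well defined and that $\nabla u\in L^{\kappa}_{\rm loc}$, both of which hold by the standing hypothesis $u\in W^{1,p}_{\rm loc}$ and the fact that $\kappa<p-1<p$. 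Once the exponents are matched, the geometric summation over the truncation levels and a covering of $Q_{2R}$ by $Q_{3R}$ finish the proof.
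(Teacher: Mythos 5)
Your proposal correctly identifies that the low end of the range, $1<p\le\frac{3n-2}{2n-1}$, is exactly \cite[Theorem 1.2]{HP3} and requires no new work, and your ``self-contained'' sketch (truncations $T_k(u-w)$, splitting level sets, Hölder against the singular weight) accurately describes the internals of the comparison estimates in \cite{HP1,HP3}. However, you have missed the actual content of the paper's proof for the remaining range $\frac{3n-2}{2n-1}<p\le 2-\frac1n$, where the citation does \emph{not} directly give \eqref{compgradu}. In that range, \cite[Lemma 2.2]{HP1} yields the comparison estimate only for an exponent $\gamma_0\in(\frac{n}{2n-1},\frac{n(p-1)}{n-1})$, which is strictly larger than $\kappa=(p-1)^2/2$. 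The lemma you are proving asks for the exponent $\kappa$ on \emph{both} sides, so one must still show the reverse-Hölder-type self-improvement
\begin{equation*}
\left(\fint_{Q_{2R}}|\nabla u|^{\gamma_0}\,dx\right)^{1/\gamma_0}\lesssim \left(\frac{|\mu|(Q_{3R})}{R^{n-1}}\right)^{\frac{1}{p-1}}+\left(\fint_{Q_{3R}}|\nabla u|^{\kappa}\,dx\right)^{1/\kappa}.
\end{equation*}
This is where the paper does its real work: it establishes a reverse-Hölder inequality for $\nabla u$ (with the measure term as an inhomogeneity) by comparing $u$ to yet another homogeneous replacement $w_1$ on smaller cubes, using Poincaré plus Caccioppoli for $w_1$, and then lowers the exponent on the right to an arbitrary $\epsilon>0$ by a Gehring/Giusti covering-iteration argument (\cite[Remark 6.12]{Giu}). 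Your step (iv), which speaks of bounding $\fint|\nabla w|^{\kappa}$ by $\fint|\nabla u|^{\kappa}$ via energy minimality of $w$, is not the right step: the delicate point is lowering the integrability exponent on $\nabla u$ itself, not comparing $\nabla w$ to $\nabla u$. Without the self-improvement inequality above, your argument only produces \eqref{compgradu} with $\gamma_0$ in place of $\kappa$, which is a strictly weaker statement, and the rest of the paper (e.g.\ the iteration producing Theorem \ref{sharpmax}) would not close because those arguments need the very small exponent $\kappa$ to absorb the factor $(r/\rho)^{n/\kappa}$ from the enlargement of cubes. So there is a genuine gap: you need to supply, or at least acknowledge, the intermediate reverse-Hölder step that lowers the comparison exponent from $\gamma_0$ to $\kappa$.
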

\begin{proof} For $1<p\leq \frac{3n-2}{2n-1}$, inequality \eqref{compgradu} was obtained in \cite[Theorem 1.2]{HP3}. For $\frac{3n-2}{2n-1}<p\leq 2-\frac{1}{n}$, 
	by \cite[Lemma 2.2]{HP1}, we have 
	\begin{align*}
	\left(	\fint_{Q_{2R}(x_0)}|\nabla (u-w)|^{\gamma_0} dx \right)^{\frac{1}{\gamma_0}} & \lesssim \left(\frac{|\mu|(Q_{2 R}(x_0))}{R^{n-1}}\right)^{\frac{1}{p-1}} \nonumber\\
	&\qquad  + \frac{|\mu|(Q_{2 R}(x_0))}{R^{n-1}}	
	\left(	\fint_{Q_{2 R}(x_0)}|\nabla u|^{\gamma_0} dx \right)^{\frac{2-p}{\gamma_0}}
	\end{align*}
for some $\gamma_0 \in [\frac{2-p}{2}, \frac{n(p-1)}{n-1})$.	In fact, an inspection of the proof of \cite[Lemma 2.2]{HP1} reveals that we can take {\it any}
 $\gamma_0 \in (\frac{n}{2n-1}, \frac{n(p-1)}{n-1})$. Thus we may assume that 
$\kappa=(p-1)^2/2 < \gamma_0$. To conclude the proof,  it is therefore enough to show that 
\begin{equation}\label{enough}
\left(	\fint_{Q_{2 R}(x_0)}|\nabla u|^{\gamma_0} dx \right)^{\frac{1}{\gamma_0}} \lesssim \left(\frac{|\mu|(Q_{3 R}(x_0))}{R^{n-1}}\right)^{\frac{1}{p-1}}+ \left(	\fint_{Q_{3 R}(x_0)}|\nabla u|^{\ka} dx \right)^{\frac{1}{\ka}}.
\end{equation}\\
To this end, let $\gamma_1\in (\gamma_0, \frac{n(p-1)}{n-1})$. By \cite[Corollay 2.4]{HP3}, we have 
\begin{align}\label{nabuga}
&\left(	 \fint_{Q_{ \rho}(x)}|\nabla u|^{\gamma_1} dy \right)^{\frac{1}{\gamma_1}} \nonumber\\
&\quad \lesssim \left(\frac{|\mu|(Q_{9\rho/8}(x))}{\rho^{n-1}}\right)^{\frac{1}{p-1}} + \frac{1}{\rho} \left(\fint_{Q_{9\rho/8}(x)}|u-\lambda|^{\gamma_0} dy\right)^{\frac{1}{\gamma_0}}
\end{align}
for any $\lambda\in\RR$ and any cube $Q_\rho(x)$ such that $Q_{9\rho/8}(x)\subset\Om$. \\
Now, with $Q_{8\rho/7}(x)\subset\Om$, let $w_1$ be the  unique solution $w_1\in W_0^{1,p}(Q_{8\rho/7}(x))+u$ to the  problem
\begin{equation*}
\left\{ \begin{array}{rcl}
- \operatorname{div}\left( {A(x,\nabla w_1)} \right) &=& 0 \quad \text{in} \quad Q_{8\rho/7}(x), \\ 
w_1 &=& u\quad \text{on} \quad \partial Q_{8\rho/7}(x).  
\end{array} \right.
\end{equation*}
Then from the proof of \cite[Lemma 2.2]{HP1} (using (2.8) and (2.18) in \cite{HP1}), we can deduce that 

\begin{align}\label{uw1nab}
 \frac{1}{\rho} \left(\fint_{Q_{8\rho/7}(x)}|u-w_1|^{\gamma_0} dy\right)^{\frac{1}{\gamma_0}} & \lesssim \left(\frac{|\mu|(Q_{8\rho/7}(x))}{\rho^{n-1}}\right)^{\frac{1}{p-1}} \nonumber\\
 & \quad +
\frac{|\mu|(Q_{8\rho/7}(x))}{\rho^{n-1}} \left(	\fint_{Q_{8\rho/7}(x)}|\nabla u|^{\gamma_0} dy \right)^{\frac{2-p}{\gamma_0}}.
\end{align}
By Young's inequality, this yields 
\begin{align} \label{uw1}
&\frac{1}{\rho} \left(\fint_{Q_{8\rho/7}(x)}|u-w_1|^{\gamma_0} dy\right)^{\frac{1}{\gamma_0}} \nonumber\\
 &\qquad  \lesssim \left(\frac{|\mu|(Q_{8\rho/7}(x))}{\rho^{n-1}}\right)^{\frac{1}{p-1}} +
 \left(	\fint_{Q_{8\rho/7}(x)}|\nabla u|^{\gamma_0} dy \right)^{\frac{1}{\gamma_0}}.
\end{align}
Thus by quasi-triangle and H\"older's inequalities  we get 
\begin{align}\label{ugamma}
&\frac{1}{\rho} \left(\fint_{Q_{9\rho/8}(x)}|u-\lambda|^{\gamma_0} dy\right)^{\frac{1}{\gamma_0}} \nonumber\\
&\qquad \lesssim 
\left(\frac{|\mu|(Q_{8\rho/7}(x))}{\rho^{n-1}}\right)^{\frac{1}{p-1}} +
\left(	\fint_{Q_{8\rho/7}(x)}|\nabla u|^{\gamma_0} dy \right)^{\frac{1}{\gamma_0}}\nonumber\\
& \qquad + \frac{1}{\rho} \left(\fint_{Q_{9\rho/8}(x)}|w_1-\lambda|^\frac{n}{n-1} dy\right)^{\frac{n-1}{n}},
\end{align}
where we choose $\lambda = \fint_{Q_{9\rho/8}(x)} w_1 dz$.\\
We now use Poincar\'e and the reverse H\"older's inequalities for $\nabla w_1$ to obtain that 
\begin{align*}
 &\frac{1}{\rho} \left(\fint_{Q_{9\rho/8}(x)}|w_1-\lambda|^\frac{n}{n-1} dy\right)^{\frac{n-1}{n}}\\
 &\qquad  \lesssim  \fint_{Q_{9\rho/8}(x)}|\nabla w_1| dy
 \lesssim  \left(\fint_{Q_{8\rho/7}(x)}|\nabla w_1|^{\gamma_0} dy\right)^{\frac{1}{\gamma_0}}\\
 &\qquad \lesssim \left(\fint_{Q_{8\rho/7}(x)}|\nabla u-\nabla w_1|^{\gamma_0} dy\right)^{\frac{1}{\gamma_0}} + \left(\fint_{Q_{8\rho/7}(x)}|\nabla u|^{\gamma_0} dy\right)^{\frac{1}{\gamma_0}}\\
 &\qquad \lesssim \left(\frac{|\mu|(Q_{8\rho/7}(x))}{\rho^{n-1}}\right)^{\frac{1}{p-1}} + \left(\fint_{Q_{8\rho/7}(x)}|\nabla u|^{\gamma_0} dy\right)^{\frac{1}{\gamma_0}},
\end{align*}
where we used \cite[Lemma 2.2]{HP1} and Young's inequality in the last bound.\vspace{0.15cm}
\\	
Thus combining this result with \eqref{ugamma} we find 	
\begin{align*}
\frac{1}{\rho} \left(\fint_{Q_{9\rho/8}(x)}|u-\lambda|^{\gamma_0} dy\right)^{\frac{1}{\gamma_0}} & \lesssim 
\left(\frac{|\mu|(Q_{8\rho/7}(x))}{\rho^{n-1}}\right)^{\frac{1}{p-1}} +
\left(	\fint_{Q_{8\rho/7}(x)}|\nabla u|^{\gamma_0} dy \right)^{\frac{1}{\gamma_0}}.
\end{align*}	
At this point, plugging this into \eqref{nabuga} we arrive at 
\begin{align*}
\left(	 \fint_{Q_{ \rho}(x)}|\nabla u|^{\gamma_1} dy \right)^{\frac{1}{\gamma_1}} \quad \lesssim \left(\frac{|\mu|(Q_{8\rho/7}(x))}{\rho^{n-1}}\right)^{\frac{1}{p-1}} + \left(	\fint_{Q_{8\rho/7}(x)}|\nabla u|^{\gamma_0} dy \right)^{\frac{1}{\gamma_0}}, 
\end{align*}
which holds for any cube $Q_{ \rho}(x)$ such that $Q_{ 8\rho/7}(x)\subset\Om$. Recall that $\gamma_1>\gamma_0$, and 
thus by  a covering/iteration argument as in \cite[Remark 6.12]{Giu}, we have 
\begin{align}\label{reverseHtype}
\left(	 \fint_{Q_{ \rho}(x)}|\nabla u|^{\gamma_1} dy \right)^{\frac{1}{\gamma_1}} \quad \lesssim \left(\frac{|\mu|(Q_{8\rho/7}(x))}{\rho^{n-1}}\right)^{\frac{1}{p-1}} + \left(	\fint_{Q_{8\rho/7}(x)}|\nabla u|^{\ep} dy \right)^{\frac{1}{\ep}}
\end{align}
for any $\epsilon>0$. This obviously yields \eqref{enough} as desired and the proof is complete.
\end{proof}
	
\begin{remark}\label{RemarkCOM} Using the above argument, in particular \eqref{uw1nab}, we can also show the following comparison estimate for the functions $u$ and $w$: for any $\frac{3n-2}{2n-1}<p\leq 2-\frac{1}{n}$,
\begin{align*}
\left(	\fint_{Q_{2R}(x_0)}|u-w|^{\ka} dx\right)^{\frac{1}{\kappa}} & \lesssim \left(\frac{|\mu|(Q_{3 R}(x_0))}{R^{n-p}} \right)^{\frac{1}{p-1}} \nonumber\\
&\qquad  + \frac{|\mu|(Q_{3 R}(x_0))}{R^{n-2}}	
\left(	\fint_{Q_{3 R}(x_0)}|\nabla u|^{\ka} dx \right)^{\frac{2-p}{\kappa}},
\end{align*}	
and 
\begin{align*}
\left(	\fint_{Q_{2R}(x_0)}|u-w|^{\ka} dx\right)^{\frac{1}{\kappa}} & \lesssim \left(\frac{|\mu|(Q_{3 R}(x_0))}{R^{n-p}} \right)^{\frac{1}{p-1}} \nonumber\\
&\qquad  + \frac{|\mu|(Q_{3 R}(x_0))}{R^{n-p}}	
\left(	\fint_{Q_{3 R}(x_0)}|u-\lambda|^{\ka} dx \right)^{\frac{2-p}{\kappa}}
\end{align*}	
for any $\lambda\in\RR$. For $1<p\leq \frac{3n-2}{2n-1}$, these inequalities have  been obtained in \cite[Theorem 1.2]{HP3}.
\end{remark}	
The following Poincar\'e type inequality  was obtained in the case $1<p\leq \frac{3n-2}{2n-1}$ in \cite[Corollary 1.3]{HP3}. A similar proof using  Lemma \ref{maininterior}   and inequalities of the form \eqref{uw1} and \eqref{reverseHtype} also yields the result in the case $\frac{3n-2}{2n-1}< p\leq 2-\frac{1}{n}.$
\begin{corollary}\label{poicaresub} Suppose that  $Q_{3r/2}(x_0)\subset\Om$ for some $r>0$. 
Let  $u\in W^{1,p}_{\rm loc}(\Om)$, $1<p\leq 2- \frac{1}{n}$, be a solution of \eqref{quasi-measure}. Then for any $\epsilon>0$ we have   
	\begin{align*}
		\inf_{q\in \RR} \left(\fint_{Q_r(x_0)}|u-q|^{\ep}\right)^{\frac{1}{\ep}} \lesssim\left(\frac{|\mu|(Q_{3r/2}(x_0))}{r^{n-p}}\right)^{\frac{1}{p-1}}+ 	
	r \left(	\fint_{Q_{ 3r/2}(x_0)}|\nabla u|^{\ep}\right)^{\frac{1}{\ep}}.
	\end{align*}
\end{corollary}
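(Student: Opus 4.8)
\textbf{Proof plan for Corollary \ref{poicaresub}.} The plan is to mimic the derivation of \eqref{uw1} and \eqref{reverseHtype} in the proof of Lemma \ref{maininterior}, but now keeping track of the function $u$ itself rather than its gradient. First I would fix the comparison solution: let $w\in W_0^{1,p}(Q_{3r/2}(x_0))+u$ solve $\operatorname{div}(A(x,\nabla w))=0$ in $Q_{3r/2}(x_0)$ with $w=u$ on the boundary. The starting point is the comparison estimate of Remark \ref{RemarkCOM} (or, in the range $1<p\leq\frac{3n-2}{2n-1}$, \cite[Theorem 1.2]{HP3}), which gives
\begin{align*}
\left(\fint_{Q_{3r/2}(x_0)}|u-w|^{\kappa}dx\right)^{\frac1\kappa}\lesssim\left(\frac{|\mu|(Q_{3r/2}(x_0))}{r^{n-p}}\right)^{\frac1{p-1}}+\frac{|\mu|(Q_{3r/2}(x_0))}{r^{n-2}}\left(\fint_{Q_{3r/2}(x_0)}|\nabla u|^{\kappa}dx\right)^{\frac{2-p}{\kappa}},
\end{align*}
and Young's inequality then absorbs the last product into a sum of the form $(|\mu|(Q_{3r/2}(x_0))/r^{n-p})^{1/(p-1)}+r(\fint|\nabla u|^\kappa)^{1/\kappa}$, so that
$$\tfrac1r\left(\fint_{Q_{3r/2}(x_0)}|u-w|^{\kappa}dx\right)^{\frac1\kappa}\lesssim\left(\frac{|\mu|(Q_{3r/2}(x_0))}{r^{n-p}}\right)^{\frac1{p-1}}+\left(\fint_{Q_{3r/2}(x_0)}|\nabla u|^{\kappa}dx\right)^{\frac1\kappa}.$$

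Next I would handle the homogeneous part. Taking $q=(w)_{Q_r(x_0)}$, Poincaré on the cube $Q_r(x_0)$ together with the reverse Hölder inequality for $\nabla w$ (valid for solutions of the homogeneous equation, as used already in the proof of Lemma \ref{maininterior}) gives
$$\tfrac1r\left(\fint_{Q_r(x_0)}|w-(w)_{Q_r(x_0)}|^{\kappa}dx\right)^{\frac1\kappa}\lesssim\fint_{Q_r(x_0)}|\nabla w|\,dx\lesssim\left(\fint_{Q_{3r/2}(x_0)}|\nabla w|^{\kappa}dx\right)^{\frac1\kappa},$$
and then splitting $\nabla w=(\nabla w-\nabla u)+\nabla u$, bounding the first piece by \cite[Lemma 2.2]{HP1} (the gradient comparison estimate) followed by Young's inequality, one controls $(\fint_{Q_{3r/2}}|\nabla w|^\kappa)^{1/\kappa}$ by $(|\mu|(Q_{3r/2}(x_0))/r^{n-1})^{1/(p-1)}+(\fint_{Q_{3r/2}}|\nabla u|^\kappa)^{1/\kappa}$; note $(|\mu|(Q_{3r/2})/r^{n-1})^{1/(p-1)}\le r^{1/(p-1)}\cdot(|\mu|(Q_{3r/2})/r^{n-p})^{1/(p-1)}$ and, after suitably tracking powers of $r$, this is dominated by the right-hand side we want (up to constants depending on $r$ only through the stated homogeneity, which is the exponent bookkeeping one must do carefully). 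Combining the two displays via the quasi-triangle inequality with $q=(w)_{Q_r(x_0)}$ yields the asserted bound for the exponent $\ep=\kappa$.

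Finally, to pass from $\ep=\kappa$ to an arbitrary $\ep>0$, I would invoke a reverse-Hölder/self-improvement argument for $u$ exactly of the type recorded in \eqref{reverseHtype}: inequality \eqref{nabuga} from \cite[Corollary 2.4]{HP3} combined with \eqref{uw1}-type estimates gives, for solutions of \eqref{quasi-measure}, an estimate of the oscillation $\inf_q(\fint_{Q_\rho}|u-q|^{\gamma_1})^{1/\gamma_1}$ by the same quantity with exponent $\gamma_0<\gamma_1$ plus a measure term, and a covering/iteration argument as in \cite[Remark 6.12]{Giu} then lets one replace the exponent on the right-hand side by any $\ep>0$; running this with $\gamma_1=\kappa$ and using the case already proved reduces the general case to the case $\ep=\kappa$. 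The main obstacle I anticipate is not any single deep step but rather the careful bookkeeping of the scaling exponents of $r$ across the two different comparison estimates (one with $r^{n-p}$ and $r^{n-2}$ normalizations for $u-w$, one with $r^{n-1}$ for $\nabla u-\nabla w$), making sure that every measure term assembles correctly into the single term $(|\mu|(Q_{3r/2}(x_0))/r^{n-p})^{1/(p-1)}$ and that the gradient term carries exactly one power of $r$; this is where most of the routine-but-delicate work lies, and it is precisely the place where the restriction $1<p\le 2-\frac1n$ (equivalently $\kappa<\gamma_0<\frac{n(p-1)}{n-1}$) is used to guarantee the self-improvement has room to operate.
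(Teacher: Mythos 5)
Your approach matches the one the paper (very tersely) indicates — decompose $u=(u-w)+w$, control $u-w$ via the comparison estimate of Remark \ref{RemarkCOM} (equivalently an \eqref{uw1}-type bound), control $w$'s oscillation by Poincar\'e and the reverse H\"older inequality together with the gradient comparison of Lemma \ref{maininterior}, and then self-improve the exponent via \eqref{reverseHtype} — so the architecture is sound. There are, however, two bookkeeping slips that should be repaired before the sketch becomes a proof. First, taking $w$ to be the homogeneous solution on $Q_{3r/2}(x_0)$ (i.e.\ $2R=3r/2$) and invoking Remark \ref{RemarkCOM} requires $Q_{3R}(x_0)=Q_{9r/4}(x_0)\subset\Om$, which is strictly more than the hypothesis $Q_{3r/2}(x_0)\subset\Om$; one must instead place $w$ on a cube strictly between $Q_r(x_0)$ and $Q_{3r/2}(x_0)$ (or first re-derive the comparison estimates with tighter dilation ratios), a point you gesture at but do not actually execute. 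Second, the intermediate claim $\bigl(|\mu|(Q_{3r/2})/r^{n-1}\bigr)^{1/(p-1)}\le r^{1/(p-1)}\bigl(|\mu|(Q_{3r/2})/r^{n-p}\bigr)^{1/(p-1)}$ is false unless $r\ge 1$; the relation you actually need, once the Poincar\'e bound $\tfrac1r\bigl(\fint_{Q_r}|w-(w)_{Q_r}|^\kappa\bigr)^{1/\kappa}\lesssim\cdots$ is multiplied through by $r$, is the exact identity $r\bigl(|\mu|(Q_{3r/2})/r^{n-1}\bigr)^{1/(p-1)}=\bigl(|\mu|(Q_{3r/2})/r^{n-p}\bigr)^{1/(p-1)}$, after which the scaling assembles correctly. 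Finally, the passage from $\ep=\kappa$ to general $\ep$ should be split into the two directions: for $\ep<\kappa$ Jensen on the left and \eqref{reverseHtype} on the right settle it at once, whereas for $\ep>\kappa$ you need the oscillation reverse H\"older you mention together with Jensen in the opposite direction on the gradient term; the phrase ``running this with $\gamma_1=\kappa$ \dots reduces the general case to $\ep=\kappa$'' blurs this case distinction, though the underlying idea is the standard one and is what the paper has in mind.
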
		
With $u$ and $w$ as in \eqref{eq1}, we now consider another auxiliary function $v$	such that $v\in W^{1,p}_{0}(Q_R(x_0))+w$ is the unique solution to the equation
\begin{equation}\label{Aux2}
\left\{ \begin{array}{rcl}
- \operatorname{div}\left( \overline{A}_{Q_R(x_0)}(\nabla v) \right) &=& 0 \quad \text{in} \quad Q_{R}(x_0), \\ 
v &=& w\quad \text{on} \quad \partial Q_{R}(x_0),  
\end{array} \right.
\end{equation}
where  $\overline{A}_{Q_R(x_0)}(\xi) = \fint_{Q_R(x_0)}A(x, \xi)dx$.
\\The following result can be deduced from \cite[Lemma 2.3]{KM1} and an appropriate reverse H\"older's inequality. 	
	
\begin{lemma}\label{vwsigma0} Let $p>1$, $0<\ep\leq p$, and $u, w$, and $v$ be as in \eqref{eq1} and \eqref{Aux2}, where $Q_{2R}(x_0)\Subset\Om$. Then there exists a small positive constant $\sigma_0>0$ such that 
$$	\left(\fint_{Q_R(x_0)} |\nabla v - \nabla w|^\ep dx \right)^{\frac{1}{\ep}} \lesssim \om(R)^{\sigma_0}  \left(\fint_{Q_{2R}(x_0)} |\nabla w|^\ep dx \right)^{\frac{1}{\ep}},
$$	
where $\om(\cdot)$ is as defined in \eqref{omdef}.
\end{lemma}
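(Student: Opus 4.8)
The plan is to compare $v$ with $w$ by freezing the nonlinearity, using the energy/Caccioppoli machinery and the smallness of the oscillation of $A(\cdot,\xi)$ measured by $\om(R)$. First I would test the two equations \eqref{eq1} and \eqref{Aux2} against the admissible test function $v-w\in W_0^{1,p}(Q_R(x_0))$; subtracting yields
\begin{equation*}
\int_{Q_R(x_0)} \langle \overline{A}_{Q_R(x_0)}(\nabla v) - \overline{A}_{Q_R(x_0)}(\nabla w), \nabla v - \nabla w\rangle\, dx = \int_{Q_R(x_0)} \langle A(x,\nabla w) - \overline{A}_{Q_R(x_0)}(\nabla w), \nabla v - \nabla w\rangle\, dx.
\end{equation*}
On the left, the monotonicity \eqref{condi2} (in the integrated form, for $1<p<2$) controls a quantity like $\int (|\nabla v|+|\nabla w|)^{p-2}|\nabla v-\nabla w|^2$ from below; on the right, the integrand is bounded by $\Upsilon(A,Q_R(x_0))(x)\,|\nabla w|^{p-1}|\nabla v-\nabla w|$ by the very definition of $\Upsilon$. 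This is the standard ``freezing'' step and is exactly where the quantity $\om(R)$ enters.

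Next I would convert the energy bound into an $L^\ep$ bound on $\nabla v-\nabla w$. For the singular range $1<p<2$ one cannot directly absorb, so I would argue as in \cite[Lemma 2.3]{KM1}: using Hölder with exponents adapted to the weight $(|\nabla v|+|\nabla w|)^{p-2}$, together with the a priori bound $\|\nabla v\|_{L^p(Q_R)}\lesssim \|\nabla w\|_{L^p(Q_R)}$ (which follows by testing \eqref{Aux2} with $v-w$ and using \eqref{condi1}--\eqref{condi2}), one obtains
\begin{equation*}
\left(\fint_{Q_R(x_0)} |\nabla v-\nabla w|^{p}\, dx\right)^{\frac1p} \lesssim \om(R)^{\sigma_0}\left(\fint_{Q_R(x_0)} |\nabla w|^p\, dx\right)^{\frac1p}
\end{equation*}
for a suitable small $\sigma_0=\sigma_0(n,p,\Lambda)>0$ coming from the Hölder exponent bookkeeping and from estimating the $L^2$-average of $\Upsilon$ by $\om(R)$ via its definition \eqref{omdef}. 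This gives the claimed inequality with $\ep=p$ and with $Q_R$ on the right-hand side.

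To obtain the statement for general $0<\ep\le p$ and with the larger cube $Q_{2R}$ on the right, I would invoke the reverse Hölder inequality for the $A$-harmonic function $w$ (available by the usual Gehring-type self-improving estimate for solutions of ${\rm div}(A(x,\nabla w))=0$, interior estimate on $Q_R\subset Q_{2R}$): this upgrades $\big(\fint_{Q_R}|\nabla w|^p\big)^{1/p}\lesssim \big(\fint_{Q_{2R}}|\nabla w|^\ep\big)^{1/\ep}$ for any $\ep>0$. For the left-hand side, when $\ep<p$ I simply use Hölder's inequality $\big(\fint_{Q_R}|\nabla v-\nabla w|^\ep\big)^{1/\ep}\le\big(\fint_{Q_R}|\nabla v-\nabla w|^p\big)^{1/p}$; when one needs $\ep$ larger than $p$ this is not in the stated range, so no further argument is required. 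Combining the three displays yields the lemma.

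The main obstacle I anticipate is the second step: in the singular regime $1<p<2$ the degenerate weight $(|\nabla v|+|\nabla w|)^{p-2}$ blows up where the gradients are small, so extracting a clean $L^p$ (or $L^\ep$) bound on $\nabla v-\nabla w$ from the weighted energy requires a careful Hölder splitting and the correct choice of the exponent producing $\sigma_0$ — this is precisely the technical heart of \cite[Lemma 2.3]{KM1}, which I would cite and adapt rather than reprove. The freezing identity and the reverse Hölder step are comparatively routine.
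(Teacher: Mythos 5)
Your proposal is correct and follows essentially the same route the paper takes: the paper disposes of this lemma by saying it ``can be deduced from \cite[Lemma~2.3]{KM1} and an appropriate reverse H\"older's inequality,'' and your outline is exactly a fleshed-out version of that sentence, with the freezing identity and weighted energy bound being the content of \cite[Lemma~2.3]{KM1} (which you correctly identify as the technical heart and propose to cite), and the reverse H\"older step for the $A$-harmonic $w$ supplying the passage from exponent $p$ on $Q_R$ to exponent $\ep$ on $Q_{2R}$.
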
	
	Likewise, following lemma follows from \cite[Lemma 2.5]{KM1}.
\begin{lemma}\label{Dini-VMOlem} Let $1<p<2$, $0<\ep\leq p$, and $u, w$, and $v$ be as in \eqref{eq1} and \eqref{Aux2}, where $Q_{2R}(x_0)\Subset\Om$. Then
for any $\sigma_1\in (0,1)$ such that $\om(\cdot)^{\sigma_1}$ is Dini-VMO, i.e., \eqref{Dini-VMO} holds,
it follows that 
	$$	\left(\fint_{Q_R(x_0)} |\nabla v - \nabla w|^\ep dx \right)^{\frac{1}{\ep}} \lesssim \om(R)^{\sigma_1}  \left(\fint_{Q_{2R}(x_0)} |\nabla w|^\ep dx \right)^{\frac{1}{\ep}}.
	$$	
\end{lemma}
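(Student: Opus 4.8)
The plan is to transfer the comparison argument of \cite[Lemma 2.5]{KM1} to the present singular range; the point is that this argument is purely local on $Q_{2R}(x_0)$ and hence insensitive to the restriction $p>2-\frac{1}{n}$ imposed elsewhere in \cite{KM1}, the only genuine change being the use of the monotonicity inequality in the form valid for $1<p<2$. First I would record the ingredients that do not see the regularity of $x\mapsto A(x,\xi)$: both $w$ and $v$ solve homogeneous equations with the structure \eqref{condi1}--\eqref{condi2} (for $v$ one checks that the average $\overline{A}_{Q_R(x_0)}$ still satisfies \eqref{condi1}--\eqref{condi2}, using $A(x,0)=0$), so each obeys a Caccioppoli inequality and, by Gehring's lemma, a self-improving reverse H\"older estimate for its gradient with an exponent $\de_0=\de_0(n,p,\Lambda)>0$; in addition $v\in C^{1,\beta_0}_{\rm loc}(Q_R(x_0))$ by Lemma \ref{osckkkfornab}, and $\|\nabla v\|_{L^p(Q_R(x_0))}\lesssim\|\nabla w\|_{L^p(Q_R(x_0))}$ by testing \eqref{Aux2} with $v-w$. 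Using these reverse H\"older inequalities, together with a few cube enlargements that stay inside $Q_{2R}(x_0)$, I would reduce the whole statement to the case $\ep=p$.

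For $\ep=p$, the core step is the comparison on $Q_R:=Q_R(x_0)$: testing \eqref{eq1} and \eqref{Aux2} with $w-v\in W^{1,p}_0(Q_R)$ and subtracting gives
\[
\int_{Q_R}\langle\overline{A}_{Q_R}(\nabla w)-\overline{A}_{Q_R}(\nabla v),\nabla w-\nabla v\rangle\,dx=\int_{Q_R}\langle\overline{A}_{Q_R}(\nabla w)-A(x,\nabla w),\nabla w-\nabla v\rangle\,dx .
\]
On the left I would insert the $1<p<2$ monotonicity bound $\langle\overline{A}_{Q_R}(\xi_1)-\overline{A}_{Q_R}(\xi_2),\xi_1-\xi_2\rangle\gtrsim(|\xi_1|+|\xi_2|)^{p-2}|\xi_1-\xi_2|^2$ and on the right the bound $|\overline{A}_{Q_R}(\nabla w)-A(x,\nabla w)|\le\Up(A,Q_R)(x)|\nabla w|^{p-1}$; combining this with the identity $|\nabla w-\nabla v|^p=\big[(|\nabla w|+|\nabla v|)^{p-2}|\nabla w-\nabla v|^2\big]^{p/2}(|\nabla w|+|\nabla v|)^{p(2-p)/2}$, H\"older's inequality with exponents $2/p$ and $2/(2-p)$, the energy bound, and an absorption argument, I expect to obtain
\[
\fint_{Q_R}|\nabla w-\nabla v|^p\,dx\lesssim\Big(\fint_{Q_R}\Up(A,Q_R)^{p/(p-1)}|\nabla w|^p\,dx\Big)^{p-1}\Big(\fint_{Q_R}|\nabla w|^p\,dx\Big)^{2-p} .
\]
Since $0\le\Up(A,Q_R)\le2\Lambda$ and $p/(p-1)\ge2$, bounding $\Up(A,Q_R)^{p/(p-1)}\lesssim\Up(A,Q_R)^2$, then using H\"older with exponents $\tfrac{1+\de_0}{\de_0},1+\de_0$, the Gehring estimate for $\nabla w$, and the definition \eqref{omdef} of $\om$ in the form $\fint_{Q_R}\Up(A,Q_R)^2\,dx\le\om(R)^2$, I would recover Lemma \ref{vwsigma0} with the explicit fixed exponent $\sigma_0=\tfrac{2\de_0(p-1)}{p(1+\de_0)}>0$.

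The main obstacle is to pass from this fixed $\sigma_0$ to an arbitrary $\sigma_1\in(0,1)$ for which $\om(\cdot)^{\sigma_1}$ is Dini-VMO. If $\sigma_1\le\sigma_0$, or if $\om(R)\ge1$, there is nothing to prove, since $0\le\om(\cdot)\le2\Lambda$ and the previous step applies. For $\sigma_1>\sigma_0$ and $\om(R)<1$ I would follow the iteration of \cite[Lemma 2.5]{KM1}: freezing the coefficients at the dyadic scales $2^{-k}R$ about $x_0$, set $\overline{A}^{(k)}:=\overline{A}_{Q_{2^{-k}R}(x_0)}$ and let $v^{(k)}\in W^{1,p}_0(Q_R)+w$ solve $\operatorname{div}(\overline{A}^{(k)}(\nabla v^{(k)}))=0$ in $Q_R$, so that $v^{(0)}=v$. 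Because the consecutive freezings differ by a constant-in-$x$ operator with $\sup_{\xi\neq0}|\xi|^{1-p}|\overline{A}^{(k+1)}(\xi)-\overline{A}^{(k)}(\xi)|\lesssim\om(2^{-k}R)$, repeating the monotonicity computation above---now with an $L^\infty$ coefficient discrepancy, so that no loss of higher integrability occurs---should give the full-power comparison $\|\nabla v^{(k)}-\nabla v^{(k+1)}\|_{L^p(Q_R)}\lesssim\om(2^{-k}R)\|\nabla w\|_{L^p(Q_R)}$. Summing these, coupling them with the basic comparison along the chain of cubes $Q_{2^{-k}R}(x_0)$ (using the $C^{1,\beta_0}$ regularity of the intermediate frozen solutions to keep the relevant gradients under control), and invoking \eqref{Dini-VMO} to make the series $\sum_{k\ge0}\om(2^{-k}R)^{\sigma_1}$ converge with tails dominated by a multiple of $\om(R)^{\sigma_1}$, should produce the claimed inequality for $\ep=p$, after which the reduction step returns the statement for all $0<\ep\le p$. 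I expect the delicate point to be precisely this last summation---arranging the dyadic chain so that the convergent Dini series collapses to $\om(R)^{\sigma_1}$ rather than to a full Dini tail---which is the substance of \cite[Lemma 2.5]{KM1}.
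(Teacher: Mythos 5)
Your single-scale computation is internally consistent but it contains a loss that genuinely caps the exponent short of $1$ in the singular range. After the Cauchy--Schwarz absorption you arrive at
\[
\int_{Q_R}(|\nabla w|+|\nabla v|)^{p-2}|\nabla w-\nabla v|^2\,dx\ \lesssim\ \int_{Q_R}\Up^2\,|\nabla w|^{2(p-1)}(|\nabla w|+|\nabla v|)^{2-p}\,dx,
\]
and at this point you apply H\"older with exponents $\tfrac{p}{2(p-1)},\tfrac{p}{2-p}$, which puts the power $p/(p-1)$ on $\Up$. For $1<p<2$ one has $p/(p-1)>2$, so the only way to return to the $L^2$ quantity $\om(R)^2$ is the inequality $\Up^{p/(p-1)}\leq(2\Lambda)^{p/(p-1)-2}\Up^2$, which throws away the excess $\Up$-power. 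Chasing exponents, this yields exactly your $\sigma_0=\tfrac{2\de_0(p-1)}{p(1+\de_0)}$, and even as $\de_0\to\infty$ this saturates at $\tfrac{2(p-1)}{p}<1$. No amount of higher integrability for $\nabla w$ repairs this; the obstruction lives entirely in the way the $\Up$-power was allocated. The fix is to bound $|\nabla w|^{2(p-1)}(|\nabla w|+|\nabla v|)^{2-p}\leq(|\nabla w|+|\nabla v|)^{p}$ pointwise \emph{before} applying H\"older, so that you keep precisely $\Up^2$, obtaining $\int\Up^2(|\nabla w|+|\nabla v|)^p$. Feeding this into the identity $|\nabla w-\nabla v|^p\leq[\,(|\nabla w|+|\nabla v|)^{p-2}|\nabla w-\nabla v|^2\,]^{p/2}(|\nabla w|+|\nabla v|)^{p(2-p)/2}$ and then splitting $\fint\Up^2(|\nabla w|+|\nabla v|)^p$ with a single H\"older exponent pair $(q',q)$, Gehring/VMO higher integrability of $\nabla w$ and $\nabla v$ gives $\left(\fint\Up^{2q'}\right)^{1/q'}\lesssim\om(R)^{2/q'}$ and one lands on $\sigma_0=1/q'$, which tends to $1$ as the VMO higher-integrability exponent $q\to\infty$ (available here precisely because Dini-VMO forces $\om\to0$). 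This is the route that matches the KM1 lemma the paper cites; the paper itself gives no independent argument.

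Your second step does not close the gap. The frozen--frozen comparison $\|\nabla v^{(k)}-\nabla v^{(k+1)}\|_{L^p(Q_R)}\lesssim\om(2^{-k}R)\|\nabla w\|_{L^p(Q_R)}$ is fine (the coefficient discrepancy is constant in $x$, so no H\"older loss occurs), but summing it produces the Dini tail $\sum_{k}\om(2^{-k}R)^{\sigma_1}\approx\int_0^R\om(\rho)^{\sigma_1}\tfrac{d\rho}{\rho}$, and this quantity is \emph{not} dominated by $\om(R)^{\sigma_1}$ in general, whatever the dyadic arrangement — there is no self-improving mechanism that forces a convergent series of moduli at scales below $R$ to collapse to the single value $\om(R)^{\sigma_1}$. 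Worse, the telescoping leaves a residual term $\|\nabla w-\nabla v^{(N)}\|_{L^p(Q_R)}$, and $v^{(N)}$ does not converge to $w$ (it converges to the solution with the point-frozen coefficient $A(x_0,\cdot)$), so you are back where you started with a new frozen comparison. The iteration is therefore not the substance of the KM1 argument; the substance is the sharper single-scale H\"older bookkeeping described above.
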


\section{Pointwise fractional maximal function bounds}

As in \cite{KM1}, our proofs of Theorems \ref{uptoalpha0}--\ref{gradholdTHM} are based on the corresponding pointwise estimates for the associate  fractional and sharp fractional maximal functions, which are interesting in their own right. This section is devoted to such pointwise fractional maximal function bounds.\\
Given $R>0$ and $q>0$, following \cite{DS}, we define the following truncated sharp fractional maximal function of a function $f\in L^q_{\rm loc}(\RR^n)$:

$${\bf M}^{\#, R}_{\alpha, q}(f)(x):= \sup_{0<\rho\leq R} \inf_{m\in \RR} \rho^{-\alpha}\left(\fint_{Q_\rho(x)} |f-m|^q dx\right)^{\frac{1}{q}}, \qquad \alpha\geq 0.$$
Also,  we define a truncated fractional maximal function by
$${\bf M}^{R}_{\beta, q}(f)(x):= \sup_{0<\rho\leq R}  \rho^{\beta}\left(\fint_{Q_\rho(x)} |f|^q dx\right)^{\frac{1}{q}}, \qquad \beta\in [0,n/q].$$
In the case $q=1$, we usually drop the index $q$ in the above notation, i.e., we set ${\bf M}^{\#, R}_{\alpha, 1}(f)={\bf M}^{\#, R}_{\alpha}(f)$
and  ${\bf M}^{R}_{\beta, 1}(f)={\bf M}^{R}_{\beta}(f)$. Moreover, the definition of ${\bf M}^{R}_{\beta}(f)$ can also be naturally extended to the case where  $f=\mu$ is a locally finite signed measure in $\RR^n$:
 $${\bf M}^{R}_{\beta}(\mu)(x):= \sup_{0<\rho\leq R}  \rho^{\beta}\frac{|\mu|(Q_\rho(x))}{|Q_\rho(x)|}, \qquad \beta\in [0,n/q].$$
Note that  by  Poincar\'e  inequality we have 
$${\bf M}^{\#, R}_{\beta}(f)(x)\lesssim {\bf M}^{R}_{1-\beta}(\nabla f)(x), \qquad \beta \in[0,1],$$
for any $f\in W^{1,1}_{\rm loc}(\RR^n)$.\\
On the other hand, if $u\in W^{1,p}_{\rm loc}(\Om)$, $1<p\leq 2-\frac{1}{n}$, then it follows from Corollary \ref{poicaresub} that   
\begin{equation}\label{sharppoinc}
{\bf M}^{\#,R}_{\beta, \ep}(u)(x)\lesssim  \left[{\bf M}^{3R/2}_{p-\beta(p-1)}(\mu)(x)\right]^{\frac{1}{p-1}}+ {\bf M}^{3R/2}_{1-\beta, \ep}(\nabla u)(x),  \beta \in[0,1],
\end{equation}
for any $\ep \in (0,1)$ and any cube $Q_{3R/2}(x)\subset\Om$.\vspace{0.1cm}\\
The following fractional maximal function bound will be  needed in the proof of Theorem \ref{uptoalpha0}. 
\begin{theorem} \label{sharpmax}
 	Under \eqref{condi1}--\eqref{condi2}, let $1<p \leq 2-\frac{1}{n}$,  and suppose that $u\in W^{1,p}_{\rm loc}(\Om)$ is a solution of \eqref{quasi-measure}. Let $Q_{3R}(x)\subset\Omega$ and $\bar{\alpha} \in (0, \alpha_0)$, where $\alpha_0\in (0,1)$ is as in Lemma \ref{osckkk}. Then 	we have 
	\begin{align}\label{sharpmaxu}
	& {\bf M}^{\#,2R}_{\alpha, \kappa}(u)(x)+ {\bf M}^{3R}_{1-\alpha, \kappa}(\nabla u)(x) \nonumber\\
	&\qquad \lesssim  \left [ {\bf M}^{3R}_{p-\alpha(p-1)}(\mu)(x) \right]^{\frac{1}{p-1}} + R^{1-\alpha} \left(\fint_{Q_{3R}(x)} |\nabla u|^\kappa dy\right)^\frac{1}{\kappa} 
	\end{align}
	uniformly in $\alpha\in [0,\bar{\alpha}]$. Here the implicit constant depends on $n,p,\Lambda$, and $\bar{\alpha}$.
\end{theorem}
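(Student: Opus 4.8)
The plan is to estimate, at a fixed point $x$ and for each scale $0<\rho\le 2R$, the localized quantities $\inf_m \rho^{-\alpha}\left(\fint_{Q_\rho(x)}|u-m|^\kappa\right)^{1/\kappa}$ and $\rho^{1-\alpha}\left(\fint_{Q_\rho(x)}|\nabla u|^\kappa\right)^{1/\kappa}$ by a two-function comparison chain at each scale. For a dyadic sequence of cubes $Q_{\rho_j}(x)$ with $\rho_j = R/2^j$ (or a geometric sequence adapted to $2R$ resp.\ $3R$), I introduce on each $Q_{2\rho_j}$ the $A$-harmonic replacement $w_j$ solving \eqref{eq1} on $Q_{2\rho_j}(x)$, and then invoke the two basic ingredients already recorded: the comparison estimate Lemma \ref{maininterior} (in the form giving $\left(\fint_{Q_{2\rho}}|\nabla(u-w)|^\kappa\right)^{1/\kappa}$ controlled by $\left(\frac{|\mu|(Q_{3\rho})}{\rho^{n-1}}\right)^{1/(p-1)}$ plus a Young-absorbable term involving $\fint_{Q_{3\rho}}|\nabla u|^\kappa$), the companion $u$--$w$ comparison from Remark \ref{RemarkCOM} at the level of functions, and the interior De Giorgi oscillation decay \eqref{oscqk}–\eqref{oscp} for $w_j$. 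The Riesz-type term $\left(\frac{|\mu|(Q_{3\rho})}{\rho^{n-1}}\right)^{1/(p-1)}$ raised appropriately is exactly what, after multiplying by $\rho^{-\alpha}$ resp.\ $\rho^{1-\alpha}$ and taking the supremum over scales, produces $\left[{\bf M}^{3R}_{p-\alpha(p-1)}(\mu)(x)\right]^{1/(p-1)}$, since $p-\alpha(p-1) = (n-1) + \big(p - n + 1 - \alpha(p-1)\big)$ and the exponent bookkeeping matches the definition of ${\bf M}^{3R}_\beta(\mu)$.

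The core of the argument is the standard iteration: write, for consecutive scales $\rho_{j+1}=\rho_j/2$,
\[
\Big(\fint_{Q_{\rho_{j+1}}}|u-(w_j)_{Q_{\rho_{j+1}}}|^\kappa\Big)^{1/\kappa}
\le \Big(\fint_{Q_{\rho_{j+1}}}|w_j-(w_j)_{Q_{\rho_{j+1}}}|^\kappa\Big)^{1/\kappa}
+ C\Big(\fint_{Q_{\rho_j}}|u-w_j|^\kappa\Big)^{1/\kappa},
\]
control the first term by \eqref{oscqk} with exponent $\alpha_0$ (losing $(\rho_{j+1}/\rho_j)^{\alpha_0}$ against the oscillation of $w_j$ at scale $\rho_j$, which in turn is dominated by the oscillation of $u$ plus the comparison error), and control the second term by Remark \ref{RemarkCOM}. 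This yields a recursive inequality of the form $a_{j+1}\le \theta\, a_j + (\text{scale-}j\text{ data term})$ for the rescaled excess functionals $a_j := \rho_j^{-\alpha}\inf_m(\fint_{Q_{\rho_j}}|u-m|^\kappa)^{1/\kappa}$, with $\theta = 2^{\alpha-\alpha_0}<1$ because $\alpha\le\bar\alpha<\alpha_0$; summing the geometric series gives the bound with the claimed uniform constant (depending on $\bar\alpha$ precisely through $(\alpha_0-\bar\alpha)^{-1}$). The gradient part ${\bf M}^{3R}_{1-\alpha,\kappa}(\nabla u)$ is handled in parallel: from $\nabla u = \nabla(u-w_j) + \nabla w_j$, the comparison term is Lemma \ref{maininterior} and the $A$-harmonic term $\fint_{Q_{\rho_{j+1}}}|\nabla w_j|^\kappa$ is estimated by a reverse-Hölder/Poincaré step bounding it by $\rho_j^{-\kappa}$ times the oscillation of $u$ at a comparable scale, which feeds back into the already-controlled sharp-maximal quantity; alternatively one can use \eqref{sharppoinc} to reduce the whole statement to the single estimate for $\nabla u$ and then run the iteration only on $\nabla u$.

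The main obstacle I anticipate is the handling of the nonlinear Young-absorption terms coming from Lemma \ref{maininterior} and Remark \ref{RemarkCOM}: these contain factors like $\frac{|\mu|(Q_{3\rho})}{\rho^{n-1}}\big(\fint_{Q_{3\rho}}|\nabla u|^\kappa\big)^{(2-p)/\kappa}$, which are genuinely superlinear in the data and cannot simply be absorbed into a geometric tail; one must either iterate them separately using that $(2-p)\in[1/n,1)$ so the maximal-function exponents still close up, or split into the cases $\frac{|\mu|(Q_{3\rho})}{\rho^{n-1}}$ large versus small and absorb only in the small regime while the large regime is dominated directly by the Riesz term raised to $1/(p-1)$. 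A secondary technical point is to make the constant genuinely uniform in $\alpha\in[0,\bar\alpha]$ — this requires keeping $\theta=2^{\alpha-\alpha_0}$ bounded away from $1$ uniformly, which is automatic from $\alpha\le\bar\alpha<\alpha_0$, and checking that the measure-term exponent $1/(p-1)$ and the scaling power $n-1$ do not degenerate as $\alpha\to\bar\alpha$, which they do not. Everything else is bookkeeping of geometric sums over the dyadic scales between $\rho$ and $R$, plus a routine passage from the dyadic supremum to the continuous supremum defining the maximal functions.
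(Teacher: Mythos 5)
Your plan is essentially the paper's proof: reduce to the gradient quantity via \eqref{sharppoinc}, compare $u$ to the $A$-harmonic replacement $w$ at each scale, use the De Giorgi-type decay for $\nabla w$ at rate $(\rho/r)^{(\alpha_0-1)\kappa}$ together with Lemma \ref{maininterior}, and absorb over scales using $\alpha\le\bar\alpha<\alpha_0$ to make the contraction factor strictly less than one. Your dyadic recursion $a_{j+1}\le\theta a_j + (\text{data})_j$ is a harmless reorganization of the paper's supremum-absorption argument (the paper multiplies by $(\epsilon r)^{1-\alpha}$, takes $\sup_{r\le R}$, and chooses $\epsilon$ so $[C\epsilon^{\alpha_0-1}+1](\epsilon/3)^{1-\bar\alpha}\le 1/2$), and both give the same uniform dependence on $\bar\alpha$.

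One remark on the obstacle you flag: the superlinear term $\frac{|\mu|(Q_{3\rho})}{\rho^{n-1}}\big(\fint_{Q_{3\rho}}|\nabla u|^\kappa\big)^{(2-p)/\kappa}$ does \emph{not} require the case split or the separate iteration you suggest. Since $\frac{1}{1/(p-1)}+(2-p)=1$, a single application of Young's inequality turns it into $C_\epsilon\big(\frac{|\mu|(Q_{3\rho})}{\rho^{n-1}}\big)^{1/(p-1)}+\epsilon\big(\fint_{Q_{3\rho}}|\nabla u|^\kappa\big)^{1/\kappa}$; the first piece is the Riesz/maximal-function term and the second joins the linear gradient contribution and gets absorbed by exactly the same choice of small $\epsilon$ that controls the De Giorgi contraction factor. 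This is what the paper does (indeed you yourself describe the term as ``Young-absorbable'' earlier in the proposal), so the difficulty you anticipate in the last paragraph does not arise.
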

\begin{proof}
	The main idea of the proof of \eqref{sharpmaxu} lies the proof of  \cite[Proposition 3.1]{KM1} that treated the case $p>2-\frac{1}{n}$.  Note that  by \eqref{sharppoinc} 
	it is enough to show 
	\begin{align}\label{sharpmaxugrad}
	{\bf M}^{\ep R}_{1-\alpha, \kappa}(\nabla u)(x) \lesssim  \left [ {\bf M}^{3R}_{p-\alpha(p-1)}(\mu)(x) \right]^{\frac{1}{p-1}} + R^{1-\alpha} \left(\fint_{Q_{3R}(x)} |\nabla u|^\kappa dy\right)^\frac{1}{\kappa},
	\end{align}
	for some $\ep =\ep_1(n,p,\Lambda, \bar{\alpha})\in (0,1)$.\\
	Let $0<\rho\leq r\leq R$, and choose $w$ as in \eqref{eq1} with $Q_{2r}(x)$ in place of $Q_{2R}(x_0)$. 
	We have
	\begin{align*}
	&\fint_{Q_\rho(x)}|\nabla u|^\kappa dy \lesssim \fint_{Q_\rho(x)}|\nabla w|^\kappa dy + \left(\frac{r}{\rho}\right)^n \fint_{Q_{2r}(x)}|\nabla u-\nabla w|^\kappa dy\\
	& \lesssim \left(\frac{\rho}{r}\right)^{(\alpha_0-1)\ka} \fint_{Q_{2r}(x)}|\nabla w|^\kappa dy + \left(\frac{r}{\rho}\right)^n \fint_{Q_{2r}(x)}|\nabla u-\nabla w|^\kappa dy\\
	&  \lesssim \left(\frac{\rho}{r}\right)^{(\alpha_0-1)\ka} \fint_{Q_{2r}(x)}|\nabla u|^\kappa dy\\
	&\qquad \qquad  + \left\{\left(\frac{\rho}{r}\right)^{(\alpha_0-1)\ka} +\left(\frac{r}{\rho}\right)^n \right\} \fint_{Q_{2r}(x)}|\nabla u-\nabla w|^\kappa dy\\
&  \lesssim \left(\frac{\rho}{r}\right)^{(\alpha_0-1)\ka} \fint_{Q_{2r}(x)}|\nabla u|^\kappa dy + \left(\frac{r}{\rho}\right)^n  \fint_{Q_{2r}(x)}|\nabla u-\nabla w|^\kappa dy,
\end{align*}
	where we used the inequality 
	\begin{equation*}
	\fint_{Q_\rho(x)}|\nabla w|^\kappa dy\lesssim \left(\frac{\rho}{r}\right)^{(\alpha_0-1)\ka} \fint_{Q_{2r}(x)}|\nabla w|^\kappa dy,
	\end{equation*}
which is a modified version of \cite[Theorem  2.2]{KM1}, in the second inequality. \\
Thus by Lemma \ref{maininterior} we get 
	\begin{align*}
	&\left(\fint_{Q_\rho(x)}|\nabla u|^\kappa dy\right)^{1/\kappa}  \lesssim \left(\frac{\rho}{r}\right)^{\alpha_0-1} \left(\fint_{Q_{2r}(x)}|\nabla u|^\kappa dy\right)^{1/\ka} \\
	&\qquad + \left(\frac{r}{\rho}\right)^{n/\ka} \left[\frac{|\mu|(Q_{3r}(x))}{r^{n-1}}\right]^{\frac{1}{p-1}}\\
	&\qquad + \left(\frac{r}{\rho}\right)^{n/\ka} \left(\frac{|\mu|(Q_{3r}(x))}{r^{n-1}}\right) \left( \fint_{Q_{3r}(x)}|\nabla u|^\kappa dy\right)^{(2-p)/\ka}.
	\end{align*}
	Let $\epsilon \in (0,1)$, and choose $\rho=\epsilon r$. Then by Young's inequality we have
\begin{align*}
&\left(\fint_{Q_{\ep r}(x)}|\nabla u|^\kappa dy\right)^{1/\kappa}  \leq  C(\ep) \left[\frac{|\mu|(Q_{3r}(x))}{r^{n-1}}\right]^{\frac{1}{p-1}} \\
& \qquad \qquad + [C\ep^{\alpha_0-1} +1] \left(\fint_{Q_{3r}(x)}|\nabla u|^\kappa dy\right)^{1/\ka}.
\end{align*}
Multiplying both sides by $(\epsilon r)^{1-\alpha}$, $0<\alpha\leq \bar{\alpha} <\alpha_0$, and taking the supremum with respect to $r\in(0,R]$, we find 
\begin{align*}
& \sup_{0<r\leq \ep R}r^{1-\alpha}\left(\fint_{Q_{r}(x)}|\nabla u|^\kappa dy\right)^{1/\kappa}  \leq  C(\ep) \sup_{0<r\leq  R} \left[\frac{|\mu|(Q_{3r}(x))}{r^{n-p+\alpha(p-1)}}\right]^{\frac{1}{p-1}} \\
& \qquad \qquad + [C\ep^{\alpha_0-1} +1] (\ep/3)^{1-\alpha} \sup_{0<r\leq  R} (3r)^{1-\alpha} \left(\fint_{Q_{3r}(x)}|\nabla u|^\kappa dy\right)^{1/\ka}.
\end{align*}
	We now choose $\epsilon \in (0,1)$ such that
	$$ [C   \epsilon^{\alpha_0-1 } +1] (\epsilon/3)^{1-\bar{\alpha}}\leq 1/2,$$
	to deduce that 
\begin{align*}
& \sup_{0<r\leq \ep R}r^{1-\alpha}\left(\fint_{Q_{r}(x)}|\nabla u|^\kappa dy\right)^{1/\kappa}  \leq  C(\ep) \sup_{0<r\leq  R} \left[\frac{|\mu|(Q_{3r}(x))}{r^{n-p+\alpha(p-1)}}\right]^{\frac{1}{p-1}} \\
& \qquad \qquad +  \sup_{\ep R < r\leq  3R} r^{1-\alpha} \left(\fint_{Q_{3r}(x)}|\nabla u|^\kappa dy\right)^{1/\ka}\\
& \qquad  \lesssim \left [ {\bf M}^{3R}_{p-\alpha(p-1)}(\mu)(x) \right]^{\frac{1}{p-1}} + R^{1-\alpha} \left(\fint_{Q_{3R}(x)} |\nabla u|^\kappa dy\right)^\frac{1}{\kappa}.
\end{align*}
		This is \eqref{sharpmaxugrad} and the proof is complete.
\end{proof}\vspace{0.2cm}\\
The following result will be needed for the proof of Theorem \ref{upto1}.
\begin{theorem}\label{smallBMOTHM} Let $1<p\leq 2-\frac{1}{n}$ and  $u\in C^0(\Om)$ be a solution to \eqref{quasi-measure}. Suppose that $Q_{3R}(x)\subset\Om$. Then for any positive $\bar{\alpha}<1$
there exists a small $\delta=\delta(n,p,\Lambda, \bar{\alpha})>0$ such that if \eqref{smallBMO} holds,
 then 	the estimate 
	\begin{align*}
	& {\bf M}^{\#,2R}_{\alpha, \kappa}(u)(x)+ {\bf M}^{3R}_{1-\alpha, \kappa}(\nabla u)(x)\\
	&\qquad \lesssim  \left [ {\bf M}^{3R}_{p-\alpha(p-1)}(\mu)(x) \right]^{\frac{1}{p-1}} + R^{1-\alpha} \left(\fint_{Q_{3R}(x)} |\nabla u|^\kappa dy\right)^\frac{1}{\kappa}
	\end{align*}
	holds uniformly in $\alpha\in[0,\bar{\alpha}]$. Here the implicit constant depends on $n,p,\Lambda, \bar{\alpha},\om(\cdot)$, and ${\rm diam}(\Om)$.
\end{theorem}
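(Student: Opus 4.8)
\textbf{Proof plan for Theorem \ref{smallBMOTHM}.}
The plan is to run the same iteration scheme as in the proof of Theorem \ref{sharpmax}, but now feeding in, at each dyadic scale, the two-step comparison $u\leftrightarrow w\leftrightarrow v$ rather than just $u\leftrightarrow w$, where $v$ solves the frozen-coefficient problem \eqref{Aux2}. By \eqref{sharppoinc} it again suffices to prove the bound for ${\bf M}^{\ep R}_{1-\alpha,\kappa}(\nabla u)(x)$ alone, for a suitable $\ep=\ep(n,p,\Lambda,\bar{\alpha})\in(0,1)$. Fix $0<\rho\leq r\leq R$, let $w$ solve \eqref{eq1} on $Q_{2r}(x)$ and let $v$ solve \eqref{Aux2} on $Q_r(x)$. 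The crucial gain is that $v$, having $x$-independent coefficients, enjoys the $C^{1,\beta_0}$-type decay of Lemma \ref{osckkkfornab}, so in particular $\fint_{Q_\rho(x)}|\nabla v|^\kappa\,dy$ is controlled by $\fint_{Q_r(x)}|\nabla v|^\kappa\,dy$ with \emph{no} loss in the exponent $\kappa$ (indeed one may even use $\alpha_0$ replaced by $1$). Writing $\nabla u=(\nabla u-\nabla w)+(\nabla w-\nabla v)+\nabla v$ and using quasi-triangle inequality for the $\kappa$-averages, one obtains
\[
\left(\fint_{Q_\rho(x)}|\nabla u|^\kappa dy\right)^{\frac1\kappa}\lesssim \left(\fint_{Q_r(x)}|\nabla u|^\kappa dy\right)^{\frac1\kappa}+\left(\frac r\rho\right)^{\frac n\kappa}\!\!\left[\left(\fint_{Q_{2r}(x)}|\nabla u-\nabla w|^\kappa dy\right)^{\frac1\kappa}\!+\left(\fint_{Q_r(x)}|\nabla w-\nabla v|^\kappa dy\right)^{\frac1\kappa}\right].
\]

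Now I would estimate the two comparison terms: Lemma \ref{maininterior} bounds the $u$–$w$ term by $\left(\frac{|\mu|(Q_{3r}(x))}{r^{n-1}}\right)^{\frac1{p-1}}$ plus a term handled by Young's inequality (exactly as in Theorem \ref{sharpmax}), and Lemma \ref{vwsigma0} bounds the $w$–$v$ term by $\om(r)^{\sigma_0}\left(\fint_{Q_{2r}(x)}|\nabla w|^\kappa dy\right)^{\frac1\kappa}$, which in turn (again via Lemma \ref{maininterior} and Young) is $\lesssim \om(r)^{\sigma_0}\big[\big(\tfrac{|\mu|(Q_{3r}(x))}{r^{n-1}}\big)^{\frac1{p-1}}+\big(\fint_{Q_{3r}(x)}|\nabla u|^\kappa dy\big)^{\frac1\kappa}\big]$. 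Setting $\rho=\ep r$, the decay part contributes a factor $C\ep^{\alpha_0-1}$ as before (with $\alpha_0$ here coming from the $w$-side reverse-Hölder/decay estimate, since one passes from $\nabla v$-decay back to $\nabla w$), while the $w$–$v$ error contributes $C(\ep)\om(r)^{\sigma_0}$. Multiplying by $(\ep r)^{1-\alpha}$ and taking $\sup_{0<r\le R}$, the self-improving term on the right is multiplied by $\big[C\ep^{\alpha_0-1}+C(\ep)\sup_{0<r\le \mathrm{diam}(\Om)}\om(r)^{\sigma_0}\big]\,(\ep/3)^{1-\bar\alpha}$; one first fixes $\ep$ so that $C\ep^{\alpha_0-1}(\ep/3)^{1-\bar\alpha}\le 1/4$, then uses \eqref{smallBMO} to choose $\delta$ so small that $C(\ep)\,\delta^{\sigma_0}(\ep/3)^{1-\bar\alpha}\le 1/4$ (absorbing, as is standard, the finitely many scales $r$ away from $0$ where $\om(r)$ need not be small into the non-homogeneous term, which forces the dependence of the implicit constant on $\om(\cdot)$ and $\mathrm{diam}(\Om)$). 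Absorbing the self-improving term and taking the supremum then yields \eqref{sharpmaxugrad} in this setting, hence the theorem.

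The main obstacle, and the point deserving the most care, is that the decay estimate $\fint_{Q_\rho}|\nabla v|^\kappa\lesssim (\rho/r)^{(\cdot)\kappa}\fint_{Q_r}|\nabla v|^\kappa$ is stated in Lemma \ref{osckkkfornab} for the $L^1$-oscillation (and for $L^\ep$-infima), not directly for the raw $\kappa$-average on a smaller cube; so one must combine it with a reverse-Hölder inequality for $\nabla v$ (available since $v$ solves a homogeneous equation) to upgrade to the $L^\kappa$-average used above, exactly the ``modified version of \cite[Theorem 2.2]{KM1}'' invoked in the proof of Theorem \ref{sharpmax}. A secondary subtlety is bookkeeping the threshold: because we want $\bar\alpha$ arbitrarily close to $1$, the exponent governing the decay gain in the iteration must be strictly larger than $\bar\alpha$; here this is guaranteed because the $C^{1,\beta_0}$-regularity of $v$ effectively lets us use decay exponent $1$ (rather than $\alpha_0$) for the frozen problem, and the $\om(\cdot)$-error term carries no decay penalty but is made small by \eqref{smallBMO} — it is precisely the interplay of these two that makes $\bar\alpha<1$ attainable, in contrast with Theorem \ref{sharpmax} where one is capped at $\alpha_0$.
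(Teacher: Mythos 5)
Your approach is essentially the same as the paper's: use the two-step comparison $u\leftrightarrow w\leftrightarrow v$, exploit a decay estimate for the frozen-coefficient solution $v$, absorb the error coming from $\om$ via the small-BMO hypothesis, and treat large scales $r$ separately (paying a dependence on $\om(\cdot)$ and ${\rm diam}(\Om)$ in the constant). That matches the paper's proof of Theorem~\ref{smallBMOTHM}.

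However, your middle paragraph contains an inconsistency you should repair. You assert that "the decay part contributes a factor $C\ep^{\alpha_0-1}$" and then propose to fix $\ep$ so that $C\ep^{\alpha_0-1}(\ep/3)^{1-\bar{\alpha}}\leq 1/4$. But $\ep^{\alpha_0-1}(\ep/3)^{1-\bar{\alpha}}=3^{\bar{\alpha}-1}\ep^{\alpha_0-\bar{\alpha}}$, which for $\bar{\alpha}>\alpha_0$ increases without bound as $\ep\to 0$ and is bounded below by $3^{\bar{\alpha}-1}$ on $(0,1]$; so once the constant $C$ is fixed, no choice of $\ep\in(0,1)$ makes this $\leq 1/4$, and you would be stuck with $\bar{\alpha}<\alpha_0$ — exactly the restriction Theorem~\ref{smallBMOTHM} is supposed to remove. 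In fact $\alpha_0$ does not appear anywhere in the paper's proof of this theorem. The correct statement, which is already implicit in your own first displayed inequality and which you articulate correctly in your closing paragraph, is that the frozen-coefficient problem gives
\[
\fint_{Q_\rho(x)}|\nabla v|^\kappa\,dy\lesssim \fint_{Q_r(x)}|\nabla v|^\kappa\,dy
\]
with a constant \emph{independent of} $\rho/r$ (a reverse-H\"older-type bound for $v$, the "modified version of (2.6) in \cite[Theorem 2.1]{KM1}"). Hence the decay portion contributes a bounded constant $c_2$, not $C\ep^{\alpha_0-1}$, and $c_2(\ep/3)^{1-\bar{\alpha}}\to 0$ as $\ep\to 0$ for every $\bar{\alpha}<1$ — this is what makes $\bar{\alpha}$ arbitrarily close to $1$ attainable. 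If you replace $C\ep^{\alpha_0-1}$ by $c_2$ throughout your middle paragraph, so that the absorption step is "choose $\ep$ with $c_2(\ep/3)^{1-\bar{\alpha}}\leq 1/4$, then shrink $\delta$ (and restrict to $R\leq\bar{R}$) so that $c_1\ep^{-n/\kappa}\om(r)^{\sigma_0}(\ep/3)^{1-\bar{\alpha}}\leq 1/4$," your argument coincides with the paper's and closes correctly.
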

\begin{proof}
The proof is similar to that of Theorem \ref{sharpmax}, but this time we need to use  Lemma \ref{vwsigma0}.
As above,  by \eqref{sharppoinc} 
it is enough to show \eqref{sharpmaxugrad} for some $\ep =\ep_1(n,p,\Lambda, \bar{\alpha})\in (0,1)$.
Let $0<\rho\leq r\leq R$, and choose $w$ as in \eqref{eq1} with $Q_{2r}(x)$ in place of $Q_{2R}(x_0)$. Then choose $v$ as in \eqref{Aux2}  with $Q_{r}(x)$ in place of $Q_{R}(x_0)$.
This time we have
\begin{align*}
&\fint_{Q_\rho(x)}|\nabla u|^\kappa dy \\&\lesssim \fint_{Q_\rho(x)}|\nabla v|^\kappa dy + \left(\frac{r}{\rho}\right)^n\fint_{Q_{r}(x)}|\nabla v-\nabla w|^\kappa dy  + \left(\frac{r}{\rho}\right)^n \fint_{Q_{2r}(x)}|\nabla u-\nabla w|^\kappa dy\\
& \lesssim  \fint_{Q_{r}(x)}|\nabla v|^\kappa dy +  \left(\frac{r}{\rho}\right)^n\fint_{Q_{r}(x)}|\nabla v-\nabla w|^\kappa dy + \left(\frac{r}{\rho}\right)^n \fint_{Q_{2r}(x)}|\nabla u-\nabla w|^\kappa dy\\
&  \lesssim  \fint_{Q_{r}(x)}|\nabla u|^\kappa dy + \left\{1+ \left(\frac{r}{\rho}\right)^n \right\}\left(\fint_{Q_{r}(x)}|\nabla v-\nabla w|^\kappa dy +  \fint_{Q_{2r}(x)}|\nabla u-\nabla w|^\kappa dy\right)\\
&  \lesssim  \fint_{Q_{r}(x)}|\nabla u|^\kappa dy +  \left(\frac{r}{\rho}\right)^n \fint_{Q_{r}(x)}|\nabla v-\nabla w|^\kappa dy+ \left(\frac{r}{\rho}\right)^n  \fint_{Q_{2r}(x)}|\nabla u-\nabla w|^\kappa dy.
\end{align*}
Here we used 
\begin{equation}\label{decaygrad-mod}
\fint_{Q_\rho(x)}|\nabla v|^\kappa dy\lesssim  \fint_{Q_{r}(x)}|\nabla v|^\kappa dy,
\end{equation}
which is a a modified version of (2.6) in \cite[Theorem  2.1]{KM1} in the second inequality. \\
Then by  Lemma \ref{vwsigma0} we get 
\begin{align*}
\left(\fint_{Q_\rho(x)}|\nabla u|^\kappa dy\right)^{1/\kappa}  &\lesssim  \left(\fint_{Q_{r}(x)}|\nabla u|^\kappa dy\right)^{1/\ka} +   \left(\frac{r}{\rho}\right)^{n/\ka} \om(r)^{\sigma_0} \left(\fint_{Q_{2r}(x)}|\nabla w|^\kappa dy\right)^{1/\ka}\\
& \quad  + \left(\frac{r}{\rho}\right)^{n/\ka}  \left(\fint_{Q_{2r}(x)}|\nabla u-\nabla w|^\kappa dy\right)^{1/\ka}\\
& \lesssim \left\{1+ \left(\frac{r}{\rho}\right)^{n/\ka} \om(r)^{\sigma_0} \right\}\left(\fint_{Q_{2r}(x)}|\nabla u|^\kappa dy\right)^{1/\ka} \\
&   \quad +  \left\{ \left(\frac{r}{\rho}\right)^{n/\ka} \om(r)^{\sigma_0}+\left(\frac{r}{\rho}\right)^{n/\ka}\right\}  \left(\fint_{Q_{2r}(x)}|\nabla u-\nabla w|^\kappa dy\right)^{1/\ka},
\end{align*}
for a  small constant $\sigma_0>0$.
Thus  using  Lemma \ref{maininterior} and  the fact that $\om(r)\leq 2 \Lambda$, we find
\begin{align}\nonumber
\left(\fint_{Q_\rho(x)}|\nabla u|^\kappa dy\right)^{1/\kappa} 
 &\lesssim \left\{1+ \left(\frac{r}{\rho}\right)^{n/\ka} \om(r)^{\sigma_0} \right\}\left(\fint_{Q_{2r}(x)}|\nabla u|^\kappa dy\right)^{1/\ka} \nonumber\\
&+ \left(\frac{r}{\rho}\right)^{n/\ka} \left[\frac{|\mu|(Q_{3r}(x))}{r^{n-1}}\right]^{\frac{1}{p-1}}\nonumber\\
&  + \left(\frac{r}{\rho}\right)^{n/\ka} \left(\frac{|\mu|(Q_{3r}(x))}{r^{n-1}}\right) \left( \fint_{Q_{3r}(x)}|\nabla u|^\kappa dy\right)^{(2-p)/\ka}.\label{l3.2}
\end{align}
Let  $\epsilon \in (0,1)$, and choose $\rho=\epsilon r$. Then by Young's inequality we have
\begin{align*}
&\left(\fint_{Q_{\ep r}(x)}|\nabla u|^\kappa dy\right)^{1/\kappa}  \leq  C_\ep \left[\frac{|\mu|(Q_{3r}(x))}{r^{n-1}}\right]^{\frac{1}{p-1}} \\
& \qquad \qquad + \left[c_1\ep^{-n/\ka} \om(r)^{\sigma_0} + c_2\right] \left(\fint_{Q_{3r}(x)}|\nabla u|^\kappa dy\right)^{1/\ka}.
\end{align*}
Multiplying both sides by $(\epsilon r)^{1-\alpha}$, $0<\alpha\leq \bar{\alpha} <1$, and taking the supremum with respect to $r\in(0,R]$, we find 
\begin{align*}
& \sup_{0<r\leq \ep R}r^{1-\alpha}\left(\fint_{Q_{r}(x)}|\nabla u|^\kappa dy\right)^{1/\kappa}  \leq  C_\ep \sup_{0<r\leq  R} \left[\frac{|\mu|(Q_{3r}(x))}{r^{n-p+\alpha(p-1)}}\right]^{\frac{1}{p-1}} \\
&  \qquad +  \left[c_1\ep^{-n/\ka} \sup_{0<r\leq R} \om(r) + c_2\right] (\ep/3)^{1-\alpha} \sup_{0<r\leq  R} (3r)^{1-\alpha} \left(\fint_{Q_{3r}(x)}|\nabla u|^\kappa dy\right)^{1/\ka}.
\end{align*}
We now choose $\epsilon \in (0,1)$  such that
$$  c_2 (\epsilon/3)^{1-\bar{\alpha}}\leq 1/4,$$
and then choose $\bar{R}=\bar{R}(n,p,\Lambda, \bar{\alpha},\om(\cdot))>0$ and a small $\delta=\delta(n,p,\Lambda,\bar{\alpha})>0$ in \eqref{smallBMO} such that 
$$ c_1\ep ^{-n/\ka} \sup_{0<r\leq \bar{R}} \om(r)  (\ep/3)^{1-\bar{\alpha}}\leq  c_1\ep ^{-n/\ka} (2\delta)  (\ep/3)^{1-\bar{\alpha}}\leq 1/4.$$
Then it follows that 
$$\left[c_1\ep^{-n/\ka} \sup_{0<r\leq R} \om(r) + c_2\right] (\ep/3)^{1-\alpha}\leq 1/2,$$
provided $R\leq \bar{R}$. Hence, for $R\leq \bar{R}$,  we  deduce that 
\begin{align*}
& \sup_{0<r\leq \ep R}r^{1-\alpha}\left(\fint_{Q_{r}(x)}|\nabla u|^\kappa dy\right)^{1/\kappa}  \leq  C(\ep) \sup_{0<r\leq  3R} \left[\frac{|\mu|(Q_r(x))}{r^{n-p+\alpha(p-1)}}\right]^{\frac{1}{p-1}} \\
& \qquad \qquad +  \sup_{\ep R < r\leq  3R} r^{1-\alpha} \left(\fint_{Q_r(x)}|\nabla u|^\kappa dy\right)^{1/\ka}\\
& \qquad  \lesssim \left [ {\bf M}^{3R}_{p-\alpha(p-1)}(\mu)(x) \right]^{\frac{1}{p-1}} + R^{1-\alpha} \left(\fint_{Q_{3R}(x)} |\nabla u|^\kappa dy\right)^\frac{1}{\kappa}.
\end{align*}
This proves \eqref{sharpmaxugrad} in the case $R\leq \bar{R}$.  For $R>\bar{R}$, we observe that 
$${\bf M}^{\ep R}_{1-\alpha, \kappa}(\nabla u)(x)\leq {\bf M}^{\ep \bar{R}}_{1-\alpha, \kappa}(\nabla u)(x) + \left(\frac{R}{\bar{R}}\right)^{n/\ka} (\ep R)^{1-\al} \left(\fint_{Q_{\ep R}(x)} |\nabla u|^\kappa dy\right)^{1/\ka}. $$
Thus we also obtain \eqref{sharpmaxugrad} in the case $R> \bar{R}$ as long as we allow the implicit constant to depend on ${\rm diam}(\Omega)$, and $n,p,\Lambda, \bar{\alpha},\om(\cdot)$. 
\end{proof}\vspace{0.1cm}\\
In order to prove Theorem \ref{0toi1}, we need the following pointwise fractional maximal function bound.
\begin{theorem}\label{dinimax} Let $1<p\leq 2-\frac{1}{n}$ and  $u\in C^1(\Om)$ be a solution to \eqref{quasi-measure}. Suppose that $Q_{3R}(x)\subset\Om$. If for some $\sigma_1\in (0,1)$ such that $\om(\cdot)^{\sigma_1}$ is Dini-VMO, i.e., \eqref{Dini-VMO} holds, 
	then the estimate 
	\begin{align*}
	& {\bf M}^{\#,R}_{\alpha, \kappa}(u)(x)+ {\bf M}^{3R}_{1-\alpha, \kappa}(\nabla u)(x)\\
	&\qquad \lesssim  \left [ {\bf I}_{p-\alpha(p-1)}^{3R}(|\mu|)(x) \right]^{\frac{1}{p-1}} + R^{1-\alpha} \left(\fint_{Q_{3R}(x)} |\nabla u|^\kappa dy\right)^\frac{1}{\kappa}
	\end{align*}
	holds uniformly in $\alpha\in[0,1]$.	Here the implicit constant depends on $n,p,\Lambda, \bar{\alpha},\om(\cdot)$, $\sigma_1$, and ${\rm diam}(\Om)$.
\end{theorem}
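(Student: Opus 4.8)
The plan is to adapt the scheme used for Theorems \ref{sharpmax} and \ref{smallBMOTHM}, iterating the two comparison estimates over an entire geometric sequence of cubes and quantifying the gain supplied by the Dini--VMO hypothesis \eqref{Dini-VMO}. Exactly as there, by \eqref{sharppoinc} together with the elementary bound ${\bf M}^{\rho}_{\beta}(\mu)\lesssim{\bf I}^{2\rho}_{\beta}(|\mu|)$, it is enough to prove
\[
{\bf M}^{3R}_{1-\alpha,\kappa}(\nabla u)(x)\lesssim\big[{\bf I}^{3R}_{p-\alpha(p-1)}(|\mu|)(x)\big]^{\frac{1}{p-1}}+R^{1-\alpha}\Big(\fint_{Q_{3R}(x)}|\nabla u|^{\kappa}\,dy\Big)^{\frac{1}{\kappa}}
\]
uniformly in $\alpha\in[0,1]$; moreover, the scales $\rho\in(R_0,3R]$ are controlled trivially, at the price of a constant depending on ${\rm diam}(\Omega)$ and on the threshold $R_0$ fixed below, so only $\sup_{0<\rho\le R_0}\rho^{1-\alpha}(\fint_{Q_\rho(x)}|\nabla u|^{\kappa}\,dy)^{1/\kappa}$ has to be estimated.

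Fix a small ratio $\theta=\theta(n,p,\Lambda)\in(0,1/2)$, put $r_i:=\theta^{i}R_0$, and for each $i$ let $w_i$ be as in \eqref{eq1} on $Q_{2r_i}(x)$ and $v_i$ as in \eqref{Aux2} on $Q_{r_i}(x)$. With $\xi_i:=(\nabla u)_{Q_{r_i}(x)}$ and $E_i:=(\fint_{Q_{r_i}(x)}|\nabla u-\xi_i|^{\kappa}\,dy)^{1/\kappa}$, I combine the $C^{1,\beta_0}$ excess decay \eqref{oscqk'} for $v_i$ (taken with exponent $\kappa$) with the triangle inequality, Lemma \ref{maininterior} for $u\leftrightarrow w_i$, and Lemma \ref{Dini-VMOlem} for $w_i\leftrightarrow v_i$ (this is where \eqref{Dini-VMO} enters). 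Choosing $\theta$ so that the excess-decay factor $\theta^{\beta_0}$ beats the constant in that estimate produces the contractive recursion
\[
E_{i+1}\le\tfrac12E_i+C\Big\{\Big(\tfrac{|\mu|(Q_{3r_i}(x))}{r_i^{\,n-1}}\Big)^{\frac{1}{p-1}}+\tfrac{|\mu|(Q_{3r_i}(x))}{r_i^{\,n-1}}\Big(\fint_{Q_{3r_i}(x)}|\nabla u|^{\kappa}\Big)^{\frac{2-p}{\kappa}}+\om(r_i)^{\sigma_1}\Big(\fint_{Q_{2r_i}(x)}|\nabla u|^{\kappa}\Big)^{\frac{1}{\kappa}}\Big\},
\]
together with $|\xi_{i+1}|\le|\xi_i|+CE_i$, with $C=C(n,p,\Lambda,\sigma_1)$; here $\om(r_i)\le2\Lambda$ is used to absorb the subordinate comparison errors coming from Lemma \ref{maininterior}. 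It is essential that $\theta$ (hence the contraction constant $\tfrac12$) be fixed \emph{before} the non-small factors $\theta^{-n/\kappa}$ are let in.

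Multiplying the recursion through by $r_{i+1}^{1-\alpha}=\theta^{1-\alpha}r_i^{1-\alpha}\le r_i^{1-\alpha}$ keeps it contractive uniformly in $\alpha\in[0,1]$. Summing over $i$, telescoping the bound for $|\xi_i|$, and setting $T:=\sup_i r_i^{1-\alpha}(|\xi_i|+E_i)$ (finite because $u\in C^1$ forces $\xi_i\to\nabla u(x)$ and $E_i\to0$), one obtains $T$ bounded by $R^{1-\alpha}(\fint_{Q_{3R}(x)}|\nabla u|^{\kappa})^{1/\kappa}$ plus three sums. The first, $\sum_i r_i^{1-\alpha}(|\mu|(Q_{3r_i}(x))/r_i^{\,n-1})^{1/(p-1)}=\sum_i(|\mu|(Q_{3r_i}(x))/r_i^{\,n-p+\alpha(p-1)})^{1/(p-1)}$, is $\lesssim[{\bf I}^{3R}_{p-\alpha(p-1)}(|\mu|)(x)]^{1/(p-1)}$ because $\tfrac1{p-1}\ge1$ yields $\sum b_i^{1/(p-1)}\le(\sum b_i)^{1/(p-1)}$ while $\sum b_i\lesssim{\bf I}^{3R}_{p-\alpha(p-1)}(|\mu|)(x)$ by comparison with the defining integral — this is exactly where a Riesz potential raised to the power $\tfrac1{p-1}$, rather than a Wolff potential, appears. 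In the two feedback sums one bounds the averages $(\fint_{Q_{3r_i}}|\nabla u|^{\kappa})^{1/\kappa}$ and $(\fint_{Q_{2r_i}}|\nabla u|^{\kappa})^{1/\kappa}$ by $C(|\xi_{i-2}|+E_{i-2})$ and $C(|\xi_{i-1}|+E_{i-1})$ respectively; then, using the identity $r_i^{1-\alpha}\tfrac{|\mu|(Q_{3r_i})}{r_i^{\,n-1}}=\tfrac{|\mu|(Q_{3r_i})}{r_i^{\,n-p+\alpha(p-1)}}\,r_i^{(2-p)(1-\alpha)}$ and $r_i\le r_{i-2}$, the $\mu$-feedback sum is $\lesssim T^{2-p}{\bf I}^{3R}_{p-\alpha(p-1)}(|\mu|)(x)\le\varepsilon T+C_\varepsilon[{\bf I}^{3R}_{p-\alpha(p-1)}(|\mu|)(x)]^{1/(p-1)}$ by Young's inequality (here $2-p\in(0,1)$ is what makes this work), and the $\om$-feedback sum is $\lesssim T\sum_{i\ge0}\om(\theta^iR_0)^{\sigma_1}\le\varepsilon T$ once $R_0$ is small enough, depending on $\sigma_1$ and $\om(\cdot)$, because $\sum_{i\ge0}\om(\theta^iR_0)^{\sigma_1}\to0$ as $R_0\to0$ by \eqref{Dini-VMO}. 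Absorbing $\varepsilon T$ on the left (valid since $T<\infty$) gives the bound on $T$, hence on $\sup_{0<\rho\le R_0}\rho^{1-\alpha}(\fint_{Q_\rho(x)}|\nabla u|^{\kappa})^{1/\kappa}$, and the claim for ${\bf M}^{\#,R}_{\alpha,\kappa}(u)(x)$ then follows from \eqref{sharppoinc}.

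The main obstacle, I expect, is the bookkeeping of these two feedback terms: the local averages of $|\nabla u|^{\kappa}$ must be routed back into the weighted quantity $T$ itself rather than treated as a fixed bounded datum, and it is only then that the Young-type absorption reproduces exactly $[{\bf I}^{3R}_{p-\alpha(p-1)}(|\mu|)]^{1/(p-1)}$ uniformly in $\alpha\in[0,1]$. A routine point is that passing from the Dini integral \eqref{Dini-VMO} to the geometric sum $\sum_i\om(\theta^iR_0)^{\sigma_1}$ tacitly replaces $\om$ by a nondecreasing majorant with comparable Dini modulus, and that the remaining scales $\rho\in(R_0,3R]$ cost only a constant depending on ${\rm diam}(\Omega)$.
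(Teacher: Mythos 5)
Your argument follows essentially the same route as the paper's proof: the same excess-decay recursion built from Lemma \ref{osckkkfornab}, Lemma \ref{maininterior}, and Lemma \ref{Dini-VMOlem}, the same Dini--VMO smallness of the geometric sum $\sum_i\om(\theta^iR_0)^{\sigma_1}$, the same observation that $\sum b_i^{1/(p-1)}\leq(\sum b_i)^{1/(p-1)}$ (since $\frac{1}{p-1}\geq 1$) produces the Riesz potential raised to $\frac{1}{p-1}$, and the same final Young absorption using the a~priori finiteness supplied by $u\in C^1$. The only genuine bookkeeping difference is that you absorb a supremum $T:=\sup_i r_i^{1-\alpha}(|\xi_i|+E_i)$ directly, whereas the paper runs an induction on $m$ for the weighted quantities $R_m^{1-\alpha}|{\bf q}_m|$ and $R_m^{1-\alpha}A_m$ before absorbing $[{\bf M}^R_{1-\alpha,\kappa}(\nabla u)(x)]^{2-p}$ by Young at the very end; both are valid and equivalent in substance.

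There is one real slip worth flagging. You take $\xi_i:=(\nabla u)_{Q_{r_i}(x)}$ and $E_i:=\bigl(\fint_{Q_{r_i}(x)}|\nabla u-\xi_i|^{\kappa}\bigr)^{1/\kappa}$, but here $\kappa=(p-1)^2/2<1$, and for sub-unit exponents the average-centered excess is \emph{not} comparable to the minimal excess (Jensen runs the wrong way: one cannot control $|(\nabla u)_Q-{\bf q}|\leq\fint_Q|\nabla u-{\bf q}|$ by $\bigl(\fint_Q|\nabla u-{\bf q}|^\kappa\bigr)^{1/\kappa}$). Since the excess-decay estimate \eqref{oscqk'} is phrased in terms of $\inf_{\bf q}\bigl(\fint|\nabla v-{\bf q}|^\kappa\bigr)^{1/\kappa}$, the contraction $E_{i+1}\leq\frac12E_i+\dots$ is not available with $\xi_i$ equal to the mean. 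The fix is exactly what the paper does: define $\xi_i$ to be the minimizing vector $\argmin_{\bf q}\bigl(\fint_{Q_{r_i}}|\nabla u-{\bf q}|^\kappa\bigr)^{1/\kappa}$ and use the elementary comparisons \eqref{Qronly}--\eqref{Qrhor} (valid by the quasi-triangle inequality since $|\xi_i|$ is a constant) to pass between $|\xi_i|$, $E_i$, and $\bigl(\fint|\nabla u|^\kappa\bigr)^{1/\kappa}$. With that replacement the rest of your argument, including the two feedback absorptions, goes through as described.
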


\begin{proof}
	As in the proof of Theorem \ref{smallBMOTHM}, it is enough to show
	\begin{align*}
	 {\bf M}^{R}_{1-\alpha, \kappa}(\nabla u)(x) \lesssim  \left [ {\bf I}_{p-\alpha(p-1)}^{2R}(|\mu|)(x) \right]^{\frac{1}{p-1}} + R^{1-\alpha} \left(\fint_{Q_{R}(x)} |\nabla u|^\kappa dy\right)^\frac{1}{\kappa}.
	\end{align*}	
	Moreover, we may assume that $R\leq \bar{R}$, where $\bar{R}=\bar{R}(n,p,\Lambda, \sigma_1,\om(\cdot))>0$ is to be determined. \\ 
	Arguing as in the proof of \eqref{l3.2}, but this time using \eqref{oscqk'} (in Lemma \ref{osckkkfornab}) instead of \eqref{decaygrad-mod} and Lemma \ref{Dini-VMOlem} instead of Lemma  \ref{vwsigma0},
	we have for $Q_\rho(x)\subset Q_r(x) \subset Q_{3r}(x)\subset\Om$, 
	\begin{align}\label{l3.3}
	&\left(\fint_{Q_\rho(x)}|\nabla u-{\bf q}_{Q_\rho(x)}|^\kappa dy\right)^{1/\kappa} \lesssim
	  \left(\frac{\rho}{r}\right)^{\beta_0 } \left(\fint_{Q_{3r}(x)}|\nabla u -{\bf q}_{Q_{3r}(x)}|^\kappa dy\right)^{1/\ka} \nonumber\\
	&\qquad +  \left(\frac{r}{\rho}\right)^{n/\ka} \om(r)^{\sigma_1} \left(\fint_{Q_{3r}(x)}|\nabla u|^\kappa dy\right)^{1/\ka} + \left(\frac{r}{\rho}\right)^{n/\ka} \left[\frac{|\mu|(Q_{3r}(x))}{r^{n-1}}\right]^{\frac{1}{p-1}} \nonumber\\
	&\qquad +  \left(\frac{r}{\rho}\right)^{n/\ka} \left(\frac{|\mu|(Q_{3r}(x))}{r^{n-1}}\right) \left( \fint_{Q_{3r}(x)}|\nabla u|^\kappa dy\right)^{(2-p)/\ka}.
	\end{align}
	Here ${\bf q}_{Q_\rho(x)}\in\RR^n$ is defined by
	$${\bf q}_{Q_\rho(x)}:= \argmin_{{\bf q}\in\RR^n}\left(\fint_{Q_\rho(x)}|\nabla u-{\bf q}|^\kappa dy\right)^{1/\kappa}, \quad Q_\rho(x)\Subset\Om.$$
That is, ${\bf q}_{Q_\rho(x)}$ is a vector such that 
$$\inf_{{\bf q}\in\RR^n}\left(\fint_{Q_\rho(x)}|\nabla u-{\bf q}|^\kappa dy\right)^{1/\kappa}=\left(\fint_{Q_\rho(x)}|\nabla u-{\bf q}_{Q_\rho(x)}|^\kappa dy\right)^{1/\kappa}.$$
Note that for $Q_\rho(x)\subset Q_s(x) \Subset\Om$, one has
\begin{align}\label{Qronly}
|{\bf q}_{Q_s(x)}|&=\left(\fint_{Q_s(x)} |{\bf q}_{Q_s(x)}|^\ka dy\right)^{1/\ka}\nonumber\\
&\lesssim \left(\fint_{Q_s(x)} |\nabla u- {\bf q}_{Q_s(x)}|^\ka dy\right)^{1/\ka} +
\left(\fint_{Q_s(x)} |\nabla u|^\ka dy\right)^{1/\ka} \nonumber\\
&\lesssim  \left(\fint_{Q_s(x)} |\nabla u|^\ka dy\right)^{1/\ka},
\end{align}		
and also
\begin{align}\label{Qrhor}
&|{\bf q}_{Q_\rho(x)}-{\bf q}_{Q_s(x)}|=\left(\fint_{Q_\rho(x)} |{\bf q}_{Q_\rho(x)}-{\bf q}_{Q_s(x)}|^\ka dy\right)^{1/\ka}\nonumber\\
&\qquad \lesssim \left(\fint_{Q_\rho(x)}|\nabla u-{\bf q}_{Q_\rho(x)}|^\kappa dy\right)^{1/\kappa} +  \left(\fint_{Q_\rho(x)}|\nabla u-{\bf q}_{Q_s(x)}|^\kappa dy\right)^{1/\kappa}\nonumber\\
&\qquad \lesssim  \left(\frac{s}{\rho}\right)^{n/\ka} \left(\fint_{Q_s(x)}|\nabla u-{\bf q}_{Q_s(x)}|^\kappa dy\right)^{1/\kappa}.
\end{align}		
 For brevity, for any $j=0,1,2, \dots,$ and $Q_{3R}(x)\subset\Om$, we now define 
 $$Q_j=Q_{R_j}(x),\quad R_j= \ep^j R,$$
 where $ \ep\in (0,1/3)$ is to be determined, and 
 $$A_j=\left(\fint_{Q_j} |\nabla u-{\bf q}_j|^\ka dy\right)^{1/\ka}, \quad {\bf q}_j={\bf q}_{Q_j}.$$
 	Then applying \eqref{l3.3} with $\rho= \ep R_j <r=R_j/3$  we have
 \begin{align}\label{l3.3Aj}
 A_{j+1}&\leq
 c_1 \ep^{\beta_0 } A_j + c_2 \ep^{-n/\ka} \om(R_j/3)^{\sigma_1} \left(\fint_{Q_{j}}|\nabla u|^\kappa dy\right)^{1/\ka} + C_\ep  \left[\frac{|\mu|(Q_{j})}{R_j^{n-1}}\right]^{\frac{1}{p-1}} \nonumber\\
 &\quad + C_\ep \left(\frac{|\mu|(Q_{j})}{R_j^{n-1}}\right) \left( \fint_{Q_{j}}|\nabla u|^\kappa dy\right)^{(2-p)/\ka}.
 \end{align}
 By quasi-triangle inequality,  this yields 
 \begin{align*}
A_{j+1}&\leq
c_1 \ep^{\beta_0 } A_j +  c_2 \ep^{-n/\ka} \om(R_j/3)^{\sigma_1}A_j \nonumber\\
&\qquad + c_2 \ep^{-n/\ka} \om(R_j/3)^{\sigma_1}|{\bf q}_j| + C_\ep  \left[\frac{|\mu|(Q_{j})}{R_j^{n-1}}\right]^{\frac{1}{p-1}} \nonumber\\
&\qquad + C_\ep \left(\frac{|\mu|(Q_{j})}{R_j^{n-1}}\right) \left( \fint_{Q_{j}}|\nabla u|^\kappa dy\right)^{(2-p)/\ka}.
\end{align*}
 We now choose $\ep$ sufficiently small so that 	$c_1 \ep^{\beta_0 }\leq 1/4$ and then restrict $R\leq \bar{R}$, where $\bar{R}=\bar{R}(n,p,\Lambda, \sigma_1,\om(\cdot))>0$
 is such that 
 $$c_2 \ep^{-n/\ka} \sup_{0<\rho\leq \bar{R}}\om(\rho)^{\sigma_1}\leq 1/4.$$
 Then we have
 \begin{align}\label{l3.3Aj3}
 &A_{j+1}\leq
 \frac{1}{2}A_j + C \om(R_j/3)^{\sigma_1}|{\bf q}_j| + C  \left[\frac{|\mu|(Q_{j})}{R_j^{n-1}}\right]^{\frac{1}{p-1}} \nonumber\\
 &\qquad + C \left(\frac{|\mu|(Q_{j})}{R_j^{n-1}}\right) \left( \fint_{Q_{j}}|\nabla u|^\kappa dy\right)^{(2-p)/\ka}.
 \end{align}
 Summing this up over $j\in\{0, 1,, \dots, m-1\}$, $m\in \mathbb{N}$, we get
 \begin{align*}
 &\sum_{j=1}^m A_{j}\leq
 \frac{1}{2} \sum_{j=0}^{m-1} A_j + C \sum_{j=0}^{m-1} \om(R_j/3)^{\sigma_1}|{\bf q}_j| + C  \sum_{j=0}^{m-1} \left[\frac{|\mu|(Q_{j})}{R_j^{n-1}}\right]^{\frac{1}{p-1}} \nonumber\\
 &\qquad + C \sum_{j=0}^{m-1} \left(\frac{|\mu|(Q_{j})}{R_j^{n-1}}\right) \left( \fint_{Q_{j}}|\nabla u|^\kappa dy\right)^{(2-p)/\ka}.
 \end{align*}
 Hence, 
 \begin{align*}
 &\sum_{j=1}^m A_{j}\leq
 A_0 + C \sum_{j=0}^{m-1} \om(R_j/3)^{\sigma_1}|{\bf q}_j| + C  \sum_{j=0}^{m-1} \left[\frac{|\mu|(Q_{j})}{R_j^{n-1}}\right]^{\frac{1}{p-1}} \nonumber\\
 &\qquad + C \sum_{j=0}^{m-1} \left(\frac{|\mu|(Q_{j})}{R_j^{n-1}}\right) \left( \fint_{Q_{j}}|\nabla u|^\kappa dy\right)^{(2-p)/\ka}.
 \end{align*}
On the other hand, for any $m\in\mathbb{N}$, by \eqref{Qrhor} we can write 
 	\begin{align*}
 	|{\bf q}_{m+1}|= \sum_{j=0}^{m} (|{\bf q}_{j+1}| -|{\bf q}_j|) + |{\bf q}_0| \leq C \sum_{j=0}^m A_{j} + |{\bf q}_0|,
 	\end{align*}
 and therefore in view of \eqref{Qronly}, 
 \begin{align*}
 |{\bf q}_{m+1}| &\leq
  c \,A_0 + |{\bf q}_0|+ C \sum_{j=0}^{m-1} \om(R_j/3)^{\sigma_1}|{\bf q}_j| + C  \sum_{j=0}^{m-1} \left[\frac{|\mu|(Q_{j})}{R_j^{n-1}}\right]^{\frac{1}{p-1}} \nonumber\\
 &\qquad + C \sum_{j=0}^{m-1} \left(\frac{|\mu|(Q_{j})}{R_j^{n-1}}\right) \left( \fint_{Q_{j}}|\nabla u|^\kappa dy\right)^{(2-p)/\ka}\nonumber\\
 &\leq C \left(\fint_{Q_R(x)} |\nabla u|^\ka dy\right)^{1/\ka}  + C \sum_{j=0}^{m-1} \om(R_j/3)^{\sigma_1}|{\bf q}_j| + C  \sum_{j=0}^{m-1} \left[\frac{|\mu|(Q_{j})}{R_j^{n-1}}\right]^{\frac{1}{p-1}} \nonumber\\
 &\qquad + C \sum_{j=0}^{m-1} \left(\frac{|\mu|(Q_{j})}{R_j^{n-1}}\right) \left( \fint_{Q_{j}}|\nabla u|^\kappa dy\right)^{(2-p)/\ka}.
 \end{align*}
 At this point, multiplying both sides of the above inequality by $R_{m+1}^{1-\alpha}$, $m\in \mathbb{N}$, we  deduce that 
 \begin{align*}
  R_{m+1}^{1-\alpha}|{\bf q}_{m+1}|  &\lesssim  R^{1-\alpha}\left(\fint_{Q_R(x)} |\nabla u|^\ka dy\right)^{1/\ka}  + C \sum_{j=0}^{m-1} \om(R_j/3)^{\sigma_1} R_{j}^{1-\alpha} |{\bf q}_j|\\
&\quad  +  \sum_{j=0}^{m-1} \left[\frac{|\mu|(Q_{j})}{R_j^{n-p+\alpha(p-1)}}\right]^{\frac{1}{p-1}} \\
&\quad +  \sum_{j=0}^{m-1} \left(\frac{|\mu|(Q_{j})}{R_j^{n-p+\al(p-1)}}\right) 
R_j^{(1-\al)(2-p)}\left( \fint_{Q_{j}}|\nabla u|^\kappa dy\right)^{(2-p)/\ka}.
\end{align*}
Thus,
\begin{align}\label{inductboundRq}
&  R_{m+1}^{1-\alpha}|{\bf q}_{m+1}|\leq c_3 R^{1-\alpha}\left(\fint_{Q_R(x)} |\nabla u|^\ka dy\right)^{1/\ka}\nonumber \\
&\quad  + c_3 \sum_{j=0}^{m-1} \om(R_j/3)^{\sigma_1} R_{j}^{1-\alpha} |{\bf q}_j| + c_3  \left[ {\bf I}^{2R}_{p-\al(p-1)}(|\mu|)(x)\right]^{\frac{1}{p-1}}\nonumber\\
&\quad  +  c_3\, {\bf I}^{2R}_{p-\al(p-1)}(|\mu|)(x) 
\left[{\bf M}^{R}_{1-\alpha, \kappa}(\nabla u)(x)\right]^{2-p}.
 \end{align}
 We next further restrict $\bar{R}$ so that for any $R\leq \bar{R}$,
 \begin{align*}
 \sum_{j=0}^{m-1} \om(R_j/3)^{\sigma_1}\leq \frac{1}{2c_3}.
 \end{align*}
 This is possible because we have
 \begin{align}\label{sumtoint}
 \sum_{j=0}^{m-1} \om(R_j/3)^{\sigma_1}&=\om(R/3)+ \sum_{j=1}^{m-1} \om(R_j/3)^{\sigma_1}\nonumber\\
 &\leq c \int_{R/3}^{R} \om(\rho)^{\sigma_1}\frac{d\rho}{\rho} + 
 c \sum_{j=1}^{m-1} \int_{R_{j}/3}^{R_{j-1}/3} \om(\rho)^{\sigma_1} \frac{d\rho}{\rho} \nonumber\\
 &\leq c\, \int_0^{R} \om(\rho)^{\sigma_1}\frac{d\rho}{\rho},
 \end{align}
 where we used the fact that $\om(\rho_1)\leq c\, \om(\rho_2)$ provided $\rho_1\leq \rho_2\leq C \rho_1$, $C>1$.\\ 
 Then by an induction argument we deduce from \eqref{inductboundRq} that 
 \begin{align}\label{Qquant}
 &  R_{m}^{1-\alpha}|{\bf q}_{m}|\lesssim R^{1-\alpha}\left(\fint_{Q_R(x)} |\nabla u|^\ka dy\right)^{1/\ka} \nonumber \\
 & \quad +   \left[ {\bf I}^{2R}_{p-\al(p-1)}(|\mu|)(x)\right]^{\frac{1}{p-1}}  +   {\bf I}^{2R}_{p-\al(p-1)}(|\mu|)(x) 
 \left[{\bf M}^{R}_{1-\alpha, \kappa}(\nabla u)(x)\right]^{2-p},
 \end{align} 
for every integer $m\geq 0$. \\
Let us call the right-hand side of \eqref{Qquant} by ${\bf Q}$. Then from \eqref{l3.3Aj3} and simple manipulations we obtain
\begin{align*}
A_{m+1}\leq \frac{1}{2} A_m + c|{\bf q}_m| + c R_m^{\al-1}{\bf Q},
\end{align*}
which by \eqref{Qquant}  yields
\begin{align*}
R_{m+1}^{1-\al} A_{m+1} \leq \frac{1}{2} R_{m}^{1-\al} A_m + c R_{m}^{1-\al} |{\bf q}_m| + c {\bf Q}\leq \frac{1}{2} R_{m}^{1-\al} A_m +  c {\bf Q}.
\end{align*}
As $R_{0}^{1-\al} A_{0}\leq  c {\bf Q}$, by iteration we get
\begin{align}\label{RmAm}
R_{m}^{1-\al} A_{m} \leq C  {\bf Q},
\end{align}
for every integer $m\geq 0$.\\
To conclude the proof, we observe that 
\begin{align*}
{\bf M}^{R}_{1-\alpha, \kappa}(\nabla u)(x) &\leq C\sup_{m\geq 0} R_m^{1-\alpha} \left(\fint_{Q_m} |\nabla u|^\ka dy \right)^{1/\ka}\\
&\leq   C [R_{m}^{1-\al} A_{m} +  R_{m}^{1-\al} {\bf q}_{m}]\leq   C {\bf Q},
\end{align*}
where we used \eqref{Qquant} and \eqref{RmAm} in the last inequality. Then recalling the definition of ${\bf Q}$ and using Young's inequality we obtain
\begin{align*}
{\bf M}^{R}_{1-\alpha, \kappa}(\nabla u)(x) &\leq    C R^{1-\alpha} \left(\fint_{Q_{R}(x)} |\nabla u|^\kappa dy\right)^\frac{1}{\kappa} +C \left [ {\bf I}_{p-\alpha(p-1)}^{2R}(|\mu|)(x) \right]^{\frac{1}{p-1}}\\
&\qquad +\frac{1}{2}{\bf M}^{R}_{1-\alpha, \kappa}(\nabla u)(x).
\end{align*}
This completes the proof of the theorem.
\end{proof}\vspace{0.15cm}\\
The following pointwise sharp fractional maximal function bound will be used in the proof of Theorem  \ref{gradholdTHM}.

\begin{theorem}\label{supdnvthm} Let $1<p\leq 2-\frac{1}{n}$ and  $u\in C^1(\Om)$ be a solution to \eqref{quasi-measure}. Suppose that $Q_{3R}(x)\subset\Om$. If for some $\sigma_1\in (0,1)$ such that  
	\begin{equation}\label{Dini-VMO-stronger}
	\sup_{0<\rho\leq 1} \frac{\om(\rho)^{\sigma_1}}{\rho^{\bar{\alpha}}}\leq K,
	\end{equation}
	for some $\bar{\alpha}\in [0,\beta_0)$, 
		then the estimate 
	\begin{align*}
	 {\bf M}^{\#, 3R}_{\alpha, \kappa}(\nabla u)(x) & \lesssim  \left [ {\bf M}^{3R}_{1-\alpha, \kappa}(\mu)(x) \right]^{\frac{1}{p-1}}\\
	&\qquad +  \left [ {\bf I}_{1}^{3R}(|\mu|)(x) \right]^{\frac{1}{p-1}} + R^{-\alpha} \left(\fint_{Q_{3R}(x)} |\nabla u|^\kappa dy\right)^\frac{1}{\kappa}
	\end{align*}
	holds uniformly in $\alpha\in[0,\bar{\alpha}]$. Here $\beta_0$ is as in Lemma \ref{osckkkfornab}, and the implicit constant depends on $n,p,\Lambda, \bar{\alpha},\om(\cdot), \sigma_1, K$, and ${\rm diam}(\Om)$.
\end{theorem}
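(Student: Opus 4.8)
The plan is to run the iteration scheme behind the proof of Theorem~\ref{dinimax}, but now weighting the gradient excess by $R_j^{-\alpha}$ instead of $R_j^{1-\alpha}$, and replacing the use of the Dini‑VMO summability by the pointwise decay $\om(\rho)^{\sigma_1}\lesssim\rho^{\bar\alpha}$ provided by \eqref{Dini-VMO-stronger}. Keep the notation of that proof: for $Q_\rho(x)\Subset\Om$ set $A_{Q_\rho(x)}:=\inf_{{\bf q}\in\RR^n}\big(\fint_{Q_\rho(x)}|\nabla u-{\bf q}|^\kappa\,dy\big)^{1/\kappa}$, so that ${\bf M}^{\#,3R}_{\alpha,\kappa}(\nabla u)(x)=\sup_{0<\rho\le 3R}\rho^{-\alpha}A_{Q_\rho(x)}$, and fix the dyadic scales $R_0=3R$, $R_{j}=\ep^{j}\,3R$, $Q_j=Q_{R_j}(x)$, $A_j=A_{Q_j}$, ${\bf q}_j={\bf q}_{Q_j}$, with $\ep\in(0,1/3)$ to be chosen depending only on $n,p,\Lambda,\bar\alpha$. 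Comparing an arbitrary scale $\rho\in(0,3R]$ with the dyadic ones (using $A_{Q_\rho(x)}\le(R_j/\rho)^{n/\kappa}A_j$ when $R_{j+1}<\rho\le R_j$) reduces the statement to $\sup_{j\ge 0}R_j^{-\alpha}A_j\lesssim{\bf M}^{3R}_{1-\alpha}(\mu)(x)^{\frac1{p-1}}+{\bf I}^{3R}_{1}(|\mu|)(x)^{\frac1{p-1}}+R^{-\alpha}\big(\fint_{Q_{3R}(x)}|\nabla u|^\kappa\,dy\big)^{1/\kappa}$, uniformly over $\alpha\in[0,\bar\alpha]$. (When $\bar\alpha=0$ condition \eqref{Dini-VMO-stronger} is automatically satisfied and the assertion reduces to a gradient potential bound of the type in \cite{DZ,HP3}, so we may assume $\bar\alpha\in(0,\beta_0)$.)

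I would first record two a priori facts. Applying Theorem~\ref{dinimax} with $\alpha=1$ (its Dini‑VMO hypothesis follows from \eqref{Dini-VMO-stronger} since $\int_0^1\om(\rho)^{\sigma_1}\frac{d\rho}{\rho}\le K\int_0^1\rho^{\bar\alpha-1}d\rho<\infty$) gives, for every $\rho\le 3R$,
\[
\Big(\fint_{Q_\rho(x)}|\nabla u|^\kappa\,dy\Big)^{1/\kappa}\lesssim{\bf H}:={\bf I}^{3R}_{1}(|\mu|)(x)^{\frac1{p-1}}+\Big(\fint_{Q_{3R}(x)}|\nabla u|^\kappa\,dy\Big)^{1/\kappa},
\]
and hence $|{\bf q}_j|\lesssim{\bf H}$ by \eqref{Qronly}. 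Second, the decay estimate \eqref{l3.3} derived inside the proof of Theorem~\ref{dinimax} (its derivation uses only $u\in C^1$, Lemma~\ref{osckkkfornab}, and Lemma~\ref{Dini-VMOlem}, all available here) applies with $\rho=R_{j+1}$, $3r=R_j$; inserting ${\bf H}$ for the gradient averages it yields, for all $j\ge0$,
\[
A_{j+1}\le c_1\ep^{\beta_0}A_j+c_2\ep^{-n/\kappa}\om(R_j/3)^{\sigma_1}{\bf H}+C_\ep\Big[\tfrac{|\mu|(Q_j)}{R_j^{\,n-1}}\Big]^{\frac1{p-1}}+C_\ep\,\tfrac{|\mu|(Q_j)}{R_j^{\,n-1}}\,{\bf H}^{2-p}.
\]

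Next I would multiply this recursion by $R_{j+1}^{-\alpha}=\ep^{-\alpha}R_j^{-\alpha}$ and set $a_j:=R_j^{-\alpha}A_j$. The leading coefficient becomes $c_1\ep^{\beta_0-\alpha}\le c_1\ep^{\beta_0-\bar\alpha}$, which one makes $\le\tfrac12$ by choosing $\ep$ small --- this is where $\bar\alpha<\beta_0$ is used, and it is the only place. For the second term, \eqref{Dini-VMO-stronger} gives $R_j^{-\alpha}\om(R_j/3)^{\sigma_1}\lesssim K\,R_j^{\bar\alpha-\alpha}\le K\max\{1,3R\}^{\bar\alpha}$ (the finitely many scales with $R_j/3>1$ absorbed via $\om\le 2\Lambda$), so this term is $\lesssim{\bf H}$ uniformly in $j$, with a constant depending also on $\sigma_1,K,{\rm diam}(\Om)$. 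For the $\mu$‑terms I would write $R_j^{-\alpha}\big[\tfrac{|\mu|(Q_j)}{R_j^{\,n-1}}\big]^{\frac1{p-1}}=\big[\tfrac{|\mu|(Q_j)}{R_j^{\,n-(1-\alpha(p-1))}}\big]^{\frac1{p-1}}\lesssim{\bf M}^{3R}_{1-\alpha(p-1)}(\mu)(x)^{\frac1{p-1}}\lesssim{\bf M}^{3R}_{1-\alpha}(\mu)(x)^{\frac1{p-1}}$ (using $\alpha(2-p)\ge0$ and $R_j\le3R$), and likewise $R_j^{-\alpha}\tfrac{|\mu|(Q_j)}{R_j^{\,n-1}}{\bf H}^{2-p}\lesssim{\bf M}^{3R}_{1-\alpha}(\mu)(x)\,{\bf H}^{2-p}\le(p-1){\bf M}^{3R}_{1-\alpha}(\mu)(x)^{\frac1{p-1}}+(2-p){\bf H}$ by weighted AM--GM. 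Writing ${\bf K}:={\bf M}^{3R}_{1-\alpha}(\mu)(x)^{\frac1{p-1}}$, this gives $a_{j+1}\le\tfrac12 a_j+C({\bf K}+{\bf H})$ for all $j$, so iterating the geometric recursion yields $\sup_{j\ge0}a_j\le a_0+2C({\bf K}+{\bf H})$; and since $a_0\le(3R)^{-\alpha}\big(\fint_{Q_{3R}(x)}|\nabla u|^\kappa\big)^{1/\kappa}$, which (as $R\lesssim{\rm diam}(\Om)$) also controls the term $\big(\fint_{Q_{3R}(x)}|\nabla u|^\kappa\big)^{1/\kappa}$ inside ${\bf H}$, the reduced claim --- hence the theorem --- follows.

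The hard part, relative to Theorem~\ref{dinimax}, is that upgrading the weight from $R_j^{1-\alpha}$ to $R_j^{-\alpha}$ inserts one extra negative power of $R_j$ into every forcing term, so one must simultaneously verify that the recursion still contracts \emph{and} that its right‑hand side stays geometrically (indeed boundedly) controlled: the former is precisely $\ep^{\beta_0-\alpha}$ remaining bounded away from $1$ uniformly for $\alpha\le\bar\alpha$, which forces $\bar\alpha<\beta_0$; the latter is exactly where the \emph{pointwise} decay $\om(\rho)^{\sigma_1}\lesssim\rho^{\bar\alpha}$ of \eqref{Dini-VMO-stronger} --- rather than mere Dini‑VMO --- is essential, since it lets $R_j^{-\alpha}\om(R_j/3)^{\sigma_1}$ stay bounded against the gradient average. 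A pleasant simplification over Theorem~\ref{dinimax} is that no Gronwall/telescoping bookkeeping for $|{\bf q}_j|$ is needed, because the a priori bound $\big(\fint_{Q_\rho(x)}|\nabla u|^\kappa\big)^{1/\kappa}\lesssim{\bf H}$ already controls all gradient averages, which is what permits a plain geometric iteration in place of a summation argument.
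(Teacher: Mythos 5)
Your proof is correct and takes essentially the same approach as the paper: multiply the gradient‐excess recursion \eqref{l3.3Aj} by $R_{j+1}^{-\alpha}$, use $\bar\alpha<\beta_0$ to make $c_1\ep^{\beta_0-\alpha}$ a contraction coefficient, invoke Theorem \ref{dinimax} with $\alpha=1$ to control all intermediate gradient averages (which replaces the telescoping bookkeeping on $|{\bf q}_j|$ used in Theorem \ref{dinimax} itself), and use \eqref{Dini-VMO-stronger} to make the $\om$-forcing uniformly bounded; the paper does the same after first reducing to $R\le 1$, whereas you absorb the finitely many large scales via $\om\le 2\Lambda$, a purely cosmetic difference.
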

\begin{remark}
	Condition \eqref{Dini-VMO-stronger} implies the Dini-VMO condition   \eqref{Dini-VMO}. In turns, \eqref{Dini-VMO} implies \eqref{smallBMO}, whereas 
\eqref{Dini-VMO-stronger} is implied by	the Dini-H\"older condition \eqref{dhcond}.
\end{remark}
\begin{proof}
	It suffices to show
\begin{align*}
 {\bf M}^{\#, R}_{\alpha, \kappa}(\nabla u)(x) & \lesssim  \left [ {\bf M}^{R}_{1-\alpha, \kappa}(\mu)(x) \right]^{\frac{1}{p-1}}\\
&\qquad +  \left [ {\bf I}_{1}^{R}(|\mu|)(x) \right]^{\frac{1}{p-1}} + R^{-\alpha} \left(\fint_{Q_{R}(x)} |\nabla u|^\kappa dy\right)^\frac{1}{\kappa},
\end{align*}	
for $R\leq 1$, where the implicit constant depends on $n,p,\Lambda, \bar{\alpha},\om(\cdot), \sigma_1, K$, and ${\rm diam}(\Om)$.\\
With the notation used in proof of Theorem \ref{dinimax},  multiplying both sides of \eqref{l3.3Aj} by   $R_{j+1}^{-\al}$, $j\geq 0$, we have 
\begin{align*}
R_{j+1}^{-\al} A_{j+1}&\leq
c_1 \ep^{\beta_0 -\alpha} R_{j}^{-\al} A_j \nonumber\\
&\quad + C_\ep R_{j}^{-\al}\om(R_j/3)^{\sigma_1} \left(\fint_{Q_{j}}|\nabla u|^\kappa dy\right)^{1/\ka} + C_\ep R_{j}^{-\al} \left[\frac{|\mu|(Q_{j})}{R_j^{n-1}}\right]^{\frac{1}{p-1}} \nonumber\\
&\quad + C_\ep  R_{j}^{-\al}\left(\frac{|\mu|(Q_{j})}{R_j^{n-1}}\right) \left( \fint_{Q_{j}}|\nabla u|^\kappa dy\right)^{(2-p)/\ka}.
\end{align*}
This time we  choose $\ep\in (0,1/3)$ such  that 
$$c_1 \ep^{\beta_0 -\alpha}\leq c_1 \ep^{\beta_0 -\bar{\alpha}}\leq \frac{1}{2},$$
and employ \eqref{Dini-VMO-stronger} together  with  the restriction $R_j\leq 1$, to deduce
\begin{align}\label{l3.3Aj-til}
R_{j+1}^{-\al} A_{j+1}&\leq \frac{1}{2}  R_{j}^{-\al} A_j  + C K \left(\fint_{Q_{j}}|\nabla u|^\kappa dy\right)^{1/\ka} + C   \left[\frac{|\mu|(Q_{j})}{R_j^{n-1+\al}}\right]^{\frac{1}{p-1}} \nonumber\\
&\quad + C \left(\frac{|\mu|(Q_{j})}{R_j^{n-1 +\al}}\right) \left( \fint_{Q_{j}}|\nabla u|^\kappa dy\right)^{(2-p)/\ka}.
\end{align}
On the other hand, applying Theorem \ref{dinimax} in the case $\al=1$, we can bound
$$\left( \fint_{Q_{j}}|\nabla u|^\kappa dy\right)^{1/\ka}\lesssim \left [ {\bf I}_{1}^{R}(|\mu|)(x) \right]^{\frac{1}{p-1}} +  \left(\fint_{Q_{R}(x)} |\nabla u|^\kappa dy\right)^\frac{1}{\kappa}$$
for every integer $j\geq 0$. Thus, using \eqref{l3.3Aj-til} and Young's inequality we get 
\begin{align*}
R_{j+1}^{-\al} A_{j+1}&\leq \frac{1}{2}  R_{j}^{-\al} A_j  + C \left[{\bf M}^{R}_{1-\al, \ka}(\mu)(x)\right]^{\frac{1}{p-1}} \nonumber\\
&\quad + C \left [ {\bf I}_{1}^{R}(|\mu|)(x) \right]^{\frac{1}{p-1}} + C  \left(\fint_{Q_{R}(x)} |\nabla u|^\kappa dy\right)^\frac{1}{\kappa}.
\end{align*}
 Iterating this inequality, we find for any  $m\in \mathbb{N}$,
\begin{align*}
R_{m}^{-\al} A_{m}&\leq 2^{-m}  R_{0}^{-\al} A_0  + C \left[{\bf M}^{R}_{1-\al, \ka}(\mu)(x)\right]^{\frac{1}{p-1}}\nonumber\\
&\quad + C \left [ {\bf I}_{1}^{R}(|\mu|)(x) \right]^{\frac{1}{p-1}} + C  \left(\fint_{Q_{R}(x)} |\nabla u|^\kappa dy\right)^\frac{1}{\kappa}\\
&\leq  C \left[{\bf M}^{R}_{1-\al, \ka}(\mu)(x)\right]^{\frac{1}{p-1}}\nonumber\\
&\quad + C \left [ {\bf I}_{1}^{R}(|\mu|)(x) \right]^{\frac{1}{p-1}} + C R^{-\al} \left(\fint_{Q_{R}(x)} |\nabla u|^\kappa dy\right)^\frac{1}{\kappa}.
\end{align*}
In view  of the fact that 
$${\bf M}^{\#, R}_{\alpha, \kappa}(\nabla u)(x)\lesssim \sup_{m\geq 0} R_{m}^{-\al} A_{m},$$
this completes the proof of the theorem. 
\end{proof}

\section{Proof of Theorem \ref{uptoalpha0}}

\begin{proof}[Proof of Theorem \ref{uptoalpha0}]  For any cube  $Q_\rho(x)\Subset\Om$, let $q_{Q_\rho(x)}\in\RR$ be defined by
	$$q_{Q_\rho(x)}:= \argmin_{ q\in\RR}\left(\fint_{Q_\rho(x)}| u- q|^\kappa dy\right)^{1/\kappa},$$
i.e.,  $q_{Q_\rho(x)}$ is a real number such that 
$$\inf_{q\in\RR^n}\left(\fint_{Q_\rho(x)}|u-q|^\kappa dy\right)^{1/\kappa}=\left(\fint_{Q_\rho(x)}|u-q_{Q_\rho(x)}|^\kappa dy\right)^{1/\kappa}.$$	
Then using  quasi-triangle inequality  a few times and Lemma \ref{osckkk},  we have for $Q_\rho(x)\subset Q_{r}(x) \subset Q_{3r}(x) \subset\Om$,
\begin{align*}
\left(\fint_{Q_\rho(x)}| u- q_{Q_{\rho}(x)}|^\kappa dy\right)^{1/\kappa}	
&\lesssim 	\left(\frac{\rho}{r}\right)^{\al_0} \left(\fint_{Q_{2r}(x)} |u-q_{Q_{2r}(x)}|^\kappa dy\right)^{\frac{1}{\kappa}}\\
&\qquad + \left(\frac{\rho}{r}\right)^{-n/\kappa}	\left(\fint_{Q_{ 2r}(x_0)} |u-w|^\kappa dy\right)^{\frac{1}{\kappa}}.	
\end{align*}	
Here we choose $w$ as in 	\eqref{eq1} with $Q_{2r}(x)$ in place of $Q_{2R}(x_0)$.	\\
We now apply  Remark \ref{RemarkCOM} to bound the second term on the right-hand side of the above inequality. This yields that 
\begin{align*}
\left(\fint_{Q_\rho(x)}| u- q_{Q_{\rho}(x)}|^\kappa dy\right)^{1/\kappa}	
&\lesssim 	\left(\frac{\rho}{r}\right)^{\al_0} \left(\fint_{Q_{2r}(x)} |u-q_{Q_{2r}(x)}|^\kappa dy\right)^{\frac{1}{\kappa}}\\
&\quad + \left(\frac{\rho}{r}\right)^{-n/\kappa} \left(\frac{|\mu|(Q_{3r}(x))}{r^{n-p}}\right)^{\frac{1}{p-1}}\\
&\quad  + \left(\frac{\rho}{r}\right)^{-n/\kappa} \frac{|\mu|(Q_{3r}(x))}{r^{n-p}} \left(	\fint_{Q_{3r}(x)}|u-q_{Q_{3r}(x)}|^{\kappa} dy\right)^{\frac{2-p}{\kappa}}.
\end{align*}
Letting $\rho=\ep r$, $\ep \in (0,1)$, and using Young's inequality we find
\begin{align}\label{intA}
\left(\fint_{Q_{\ep r}(x)}| u- q_{Q_{\ep r}(x)}|^\kappa dy\right)^{1/\kappa}	
&\lesssim 	C \ep^{\al_0} \left(\fint_{Q_{3r}(x)} |u-q_{Q_{3r}(x)}|^\kappa dy\right)^{\frac{1}{\kappa}}\nonumber\\
&\qquad +C_\epsilon\left(\frac{|\mu|(Q_{3r}(x_0))}{r^{n-p}}\right)^{\frac{1}{p-1}}.
\end{align}	
Next,	we  choose $\ep \in (0,1/3)$ small enough so that $
		C\epsilon^{\alpha_0}\leq \frac{1}{2}$, where $C$ is the constant in \eqref{intA}. Let $Q_{R}(x_0)\subset\Om$ be as given in the theorem. Then for any cube $Q_\delta(x)\subset Q_{R}(x_0)$  we	set $\delta_j=\epsilon^{j} \delta$, $Q_j=Q_{\delta_j}(x)$, $q_j=q_{Q_j}$, $j\geq 0$, and define
		$$B_j:= \left(\fint_{Q_{j}}| u- q_{Q_j}|^\kappa dy\right)^{1/\kappa}.$$
		  Applying \eqref{intA}  with $r=\delta_j/3$ yields 
		$$
		B_{j+1} \leq \frac{1}{2}B_j +C\left( \frac{|\mu|(Q_{j})}{\delta_{j}^{n-p}}\right)^{\frac{1}{p-1}}. 
		$$
		Summing this up over $j\in \{1,3,...,m-1\}$, we obtain 
		\begin{align*}%\label{z1}
		\sum_{j=1}^{m} B_j &\leq C\, B_1 +C\sum_{j=1}^{m-1}\left(\frac{|\mu|(Q_{j})}{\delta_{j}^{n-p}}\right)^{\frac{1}{p-1}}.
		\end{align*}
		As in \eqref{Qrhor}, we have  
		$$|q_{j+1}-q_{j}| \leq C B_j $$
		for all integers $j\geq 1$, and thus 
		\begin{align}\label{qwolff}
		|q_m|  &\leq |q_m-q_1|+q_1  \leq q_1+C \sum_{j=1}^{m-1} B_j\nonumber\\
		&\leq q_1 +C B_1 + C \sum_{j=1}^{m-1}\left(\frac{|\mu|(Q_{j})}{\delta_{j}^{n-p}}\right)^{\frac{1}{p-1}}\nonumber\\
		&\leq C \left(\fint_{Q_{1}} |u|^\kappa dx\right)^{\frac{1}{\kappa}} + C \sum_{j=1}^{m-1}\left(\frac{|\mu|(Q_{j})}{\delta_{j}^{n-p}}\right)^{\frac{1}{p-1}}
		\end{align}
		holds for every integer $m\geq 2$. Here we use the simple fact (see \eqref{Qronly}) that  
		\begin{align*}
		B_1 +q_1 &\leq   C \left(\fint_{Q_{1}} |u|^\kappa dx\right)^{\frac{1}{\kappa}}.
		\end{align*}
Now for $x,y\in Q_{R/8}(x_0)$ we choose $$\delta=\frac{1}{2}|x-y|_\infty=\frac{1}{2}\max_{1\leq i\leq n}|x_i-y_i|.$$   Note that $\delta < R/8$ and $Q_{\delta}(y)\subset Q_{3\delta}(x)\subset Q_{R/2}(x_0)$. Then applying   \eqref{qwolff}, we have 
\begin{align*}
|q_m| \leq C  \left(\fint_{Q_{\delta}(x)} |u|^\kappa dz\right)^{\frac{1}{\kappa}} + C  \delta^\alpha\sum_{j=1}^{m-1}\left(\frac{|\mu|(Q_{\delta_j}(x))}{\delta_{j}^{n-p+\alpha(p-1)}}\right)^{\frac{1}{p-1}}.
\end{align*}
Sending $m\rightarrow\infty$ and using \cite[Lemma 4.1]{DS}, we get
\begin{align*}
|u(x)| \leq C  \left(\fint_{Q_\delta(x)} |u|^\kappa dz\right)^{\frac{1}{\kappa}} + C \delta^\alpha {\bf W}_{1-\alpha(p-1)/p,p}^R(|\mu|)(x).
\end{align*}
Since $u-m, m\in\RR,$ is  also a solution of \eqref{quasi-measure}, it follows that 
\begin{align*}
|u(x)-m| &\leq C  \left(\fint_{Q_{\delta}(x)} |u-m|^\kappa dz\right)^{\frac{1}{\kappa}} + C \delta^\alpha {\bf W}_{1-\alpha(p-1)/p,p}^R(|\mu|)(x)\\
&\leq C  \left(\fint_{Q_{3\delta}(x)} |u-m|^\kappa dz\right)^{\frac{1}{\kappa}} + C \delta^\alpha {\bf W}_{1-\alpha(p-1)/p,p}^R(|\mu|)(x).
\end{align*}
Likewise, we have
\begin{align*}
|u(y)-m| &\leq C  \left(\fint_{Q_{\delta}(y)} |u-m|^\kappa dz\right)^{\frac{1}{\kappa}} + C \delta^\alpha {\bf W}_{1-\alpha(p-1)/p,p}^R(|\mu|)(y)\\
&\leq C  \left(\fint_{Q_{3\delta}(x)} |u-m|^\kappa dz\right)^{\frac{1}{\kappa}} + C \delta^\alpha {\bf W}_{1-\alpha(p-1)/p,p}^R(|\mu|)(y).
\end{align*}
Now choosing $m=q_{Q_{3\delta}(x)}$ we find
\begin{align}\label{diffxy}
|u(x)-u(y)| &\leq C  \left(\fint_{Q_{3\delta}(x)} |u-q_{Q_{3\delta}(x)}|^\kappa dz\right)^{\frac{1}{\kappa}}\nonumber\\
&\quad  + C \delta^\alpha \Big[ {\bf W}_{1-\alpha(p-1)/p,p}^R(|\mu|)(x)+ {\bf W}_{1-\alpha(p-1)/p,p}^R(|\mu|)(y)\Big].
\end{align}
On the other hand, by Theorem \ref{sharpmax} and the fact that $3\delta< 3R/8,$ we have
\begin{align}\label{CCOO}
&\left(\fint_{Q_{3\delta}(x)} |u-q_{Q_{3\delta}(x)}|^\kappa dz\right)^{\frac{1}{\kappa}}\nonumber\\
&\qquad \lesssim  \delta^{\alpha} \left [ {\bf M}^{9R/16}_{p-\alpha(p-1)}(\mu)(x) \right]^{\frac{1}{p-1}} + \left(\frac{\delta}{R}\right)^{\alpha} R\left(\fint_{Q_{9R/16}(x)} |\nabla u|^\kappa dz\right)^\frac{1}{\kappa} \nonumber\\
&\qquad \lesssim \delta^\alpha  {\bf W}_{1-\alpha(p-1)/p,p}^R(|\mu|)(x)
+\left(\frac{\delta}{R}\right)^\alpha \left(\fint_{B_{R}(x)} |u|^\kappa dz\right)^{\frac{1}{\kappa}},
\end{align}
where we used a Caccioppoli type inequality of \cite[Corollary 2.4]{HP3} in the last bound.\\
Combining inequalities \eqref{diffxy} and \eqref{CCOO}, we complete the proof of the theorem.
\end{proof}

\section{Proof of Theorems \ref{upto1} and \ref{0toi1}}
\begin{proof}[Proof of Theorems \ref{upto1}]
The main idea of the proof of Theorem \ref{upto1} lies in the proof of \cite[Theorem 1.2]{KM1}. First, in view  of Theorem  
\ref{uptoalpha0}, it suffices to prove \eqref{fracalphau-new} uniformly in $\al\in [\al_0/2, \bar{\alpha}]$, $\bar{\alpha}<1$, for all $x, y\in Q_{R/8}(x_0)$.  
\\
On the other hand, for a.e. $x, y\in Q_{R/8}(x_0)$ and $f\in L^\ka(Q_R(x_0))$,  we have 
 the  inequality 
\begin{align*}
|f(x)-f(y)|\leq \left( \frac{c}{\al} \right) |x-y|^\alpha & 
 \left[ {\bf M}^{\#, R/2}_{\al, \ka}(f)(x) + {\bf M}^{\#, R/2}_{\al, \ka}(f)(y) \right],
\end{align*}
 provided $\alpha\in (0,1]$. See inequalities (4.9) and (4.10) in 
\cite{DS}.
Applying this with $f=u$ and $\al\in [\al_0/2, \bar{\alpha}]$, and using Theorem \ref{smallBMOTHM}, we obtain 
\begin{align*}
& |u(x)-u(y)|\leq \left( \frac{c}{\al_0} \right) |x-y|^\alpha  \left[ {\bf M}^{3R/4}_{p-\alpha(p-1)}(\mu)(x) + {\bf M}^{3R/4}_{p-\alpha(p-1)}(\mu)(y)\right]^{\frac{1}{p-1}}\\
 & + \left( \frac{c}{\al_0} \right) |x-y|^\alpha  R^{1-\alpha} \left\{\left(\fint_{Q_{3R/4}(x)} |\nabla u|^\kappa dy\right)^\frac{1}{\kappa} +\left(\fint_{Q_{3R/4}(y)} |\nabla u|^\kappa dy\right)^\frac{1}{\kappa}\right\}.
\end{align*}
Then invoking the Caccioppoli type inequality of \cite[Corollary 2.4]{HP3} we obtain \eqref{fracalphau-new} uniformly in $\al\in [\al_0/2, \bar{\alpha}]$ as desired.
\end{proof}\vspace{0.2cm}\\
\begin{proof}[Proof of Theorem \ref{0toi1}]
The proof of Theorem \ref{0toi1} is similar to that of Theorems \ref{upto1}, but this time we use Theorem \ref{dinimax}  instead of Theorem   \ref{smallBMOTHM}.
\end{proof}

\section{Proof of Theorem \ref{gradholdTHM}}
\begin{proof}[Proof of Theorem \ref{gradholdTHM}]
	Let $Q_R(x_0)\subset\Om$ be as given in the theorem. For any $x,y\in Q_{R/4}(x_0)$, we set $\delta=\frac{1}{2}|x-y|_\infty$. Note that  $\de<R/4$
	and $Q_\delta(y)\subset Q_{3\delta}(x)\subset Q_{R}(x_0)$. We shall keep the notation in the proof of Theorem \ref{dinimax} except that  we replace $R$ with $\de$  so that
	$R_j=\de_j=\ep^j \de$, $Q_j=Q_{\ep^j \de}(x)$,  ${\bf q}_j= {\bf q}_{Q_{\ep^j \de}(x)}$, and 
	$$A_j=\left(\fint_{Q_{\ep^j \de}(x)} |\nabla u-{\bf q}_{Q_{\ep^j \de}(x)}|^\ka dy\right)^{1/\ka}=\left(\fint_{Q_{j}} |\nabla u-{\bf q}_{j}|^\ka dy\right)^{1/\ka}$$
	for all integers $j\geq 0$.\\
	Then by choosing $\ep$ in \eqref{l3.3Aj} such  that $c_1 \ep^{\beta_0 }\leq 1/2$, we have 
	\begin{align*}
	A_{j+1}&\leq
	\frac{1}{2} A_j +  C_\ep \om(\de_j/3)^{\sigma_1} \left(\fint_{Q_{j}}|\nabla u|^\kappa dy\right)^{1/\ka} + C_\ep  \left[\frac{|\mu|(Q_{j})}{\de_j^{n-1}}\right]^{\frac{1}{p-1}} \nonumber\\
	&\quad + C_\ep \left(\frac{|\mu|(Q_{j})}{\de_j^{n-1}}\right) \left( \fint_{Q_{j}}|\nabla u|^\kappa dy\right)^{(2-p)/\ka}.
	\end{align*}
	 Summing this up over $j\in\{0, 1,, \dots, m-1\}$, $m\in \mathbb{N}$, and then simplifying,   we get
	 \begin{align*}
	 \sum_{j=1}^m A_{j}&\leq
	 A_0 + C \sum_{j=0}^{m-1} \om(\de_j/3)^{\sigma_1} \left(\fint_{Q_{j}}|\nabla u|^\kappa dy\right)^{1/\ka} + C  \sum_{j=0}^{m-1} \left[\frac{|\mu|(Q_{j})}{\de_j^{n-1}}\right]^{\frac{1}{p-1}} \nonumber\\
	 &\qquad + C \sum_{j=0}^{m-1} \left(\frac{|\mu|(Q_{j})}{\de_j^{n-1}}\right) \left( \fint_{Q_{j}}|\nabla u|^\kappa dy\right)^{(2-p)/\ka}.
	 \end{align*}
	 On the other hand,  by  \eqref{Qrhor},
	 \begin{align*}
	 |{\bf q}_{m+1}-{\bf m}| &=  \sum_{j=0}^{m} (|{\bf q}_{j+1}-{\bf m}| -|{\bf q}_j-{\bf m}|) + |{\bf q}_0-{\bf m}|\\
	 &\leq \sum_{j=0}^{m} (|{\bf q}_{j+1} -{\bf q}_j| + |{\bf q}_0-{\bf m}|
	  \leq C \sum_{j=0}^m A_{j} + |{\bf q}_0-{\bf m}|,
	 \end{align*}
	 which holds for any ${\bf m}\in\RR^n$ and integer $m\geq 0$.\\
	 Hence, it follows that 
	 \begin{align*}
	  |{\bf q}_{m+1}-{\bf m}| &\leq
	 C \sum_{j=0}^{m-1} \om(\de_j/3)^{\sigma_1} \left(\fint_{Q_{j}}|\nabla u|^\kappa dy\right)^{1/\ka} + C  \sum_{j=0}^{m-1} \left[\frac{|\mu|(Q_{j})}{\de_j^{n-1}}\right]^{\frac{1}{p-1}} \nonumber\\
	 &\quad + C \sum_{j=0}^{m-1} \left(\frac{|\mu|(Q_{j})}{\de_j^{n-1}}\right) \left( \fint_{Q_{j}}|\nabla u|^\kappa dy\right)^{(2-p)/\ka} +
	  C\, A_0 + |{\bf q}_0-{\bf m}|.
	 \end{align*}
	 Then  using
	 $$|{\bf q}_0-{\bf m}|\lesssim \left(\fint_{Q_{\de}(x)}|\nabla u-{\bf m}|^\kappa dy\right)^{1/\ka},$$
	 which can be proved as in \eqref{Qronly}, we get 
	 \begin{align*}
	 |{\bf q}_{m+1}-{\bf m}| 	 &\lesssim
	  \sum_{j=0}^{m-1} \om(\de_j/3)^{\sigma_1} \left(\fint_{Q_{j}}|\nabla u|^\kappa dy\right)^{1/\ka} +   \sum_{j=0}^{m-1} \left[\frac{|\mu|(Q_{j})}{\de_j^{n-1}}\right]^{\frac{1}{p-1}} \nonumber\\
	 &\quad +  \sum_{j=0}^{m-1} \left(\frac{|\mu|(Q_{j})}{\de_j^{n-1}}\right) \left( \fint_{Q_{j}}|\nabla u|^\kappa dy\right)^{(2-p)/\ka}\\
	 &\quad  + \left(\fint_{Q_{\de}(x)}|\nabla u-{\bf q}_{Q_\de(x)}|^\kappa dy\right)^{1/\ka} + \left(\fint_{Q_{\de}(x)}|\nabla u-{\bf m}|^\kappa dy\right)^{1/\ka}.
	 \end{align*}
We next set $$M(x,r):= \left[{\bf I}_1^{r}(|\mu|)(x)\right]^{\frac{1}{p-1}}+ \left(\fint_{Q_r(x)} |\nabla u|^\ka dz\right)^{1/\ka}, \quad r>0.$$	
Then  applying Theorem 
\ref{dinimax} with $\al=1$ and $3R=\de$, we have
$$\left(\fint_{Q_{j}}|\nabla u|^\kappa dy\right)^{1/\ka}\lesssim M(x,\de),\qquad \forall j\geq 0.$$	 
Plugging this into the last bound for 	 $|{\bf q}_{m+1}-{\bf m}|$ we deduce that 
\begin{align*}
|{\bf q}_{m+1}-{\bf m}| &\lesssim
 \de^\al \sum_{j=0}^{m-1} \de_j^{-\al} \om(\de_j/3)^{\sigma_1}M(x,\de) + \de^\al  \sum_{j=0}^{m-1} \left[\frac{|\mu|(Q_{j})}{\de_j^{n-1 +\al}}\right]^{\frac{1}{p-1}} \de^{\frac{\al(2-p)}{p-1}} \nonumber\\
&\quad +  \de^\al \sum_{j=0}^{m-1} \left(\frac{|\mu|(Q_{j})}{\de_j^{n-1 +\al}}\right)  M(x,\de)^{2-p}\\
&\quad  + \left(\fint_{Q_{\de}(x)}|\nabla u-{\bf q}_{Q_\de(x)}|^\kappa dy\right)^{1/\ka} + \left(\fint_{Q_{\de}(x)}|\nabla u-{\bf m}|^\kappa dy\right)^{1/\ka}.
\end{align*}
Also, note that as in \eqref{sumtoint} we have 
$$\sum_{j=0}^{m-1} \de_j^{-\al} \om(\de_j/3)^{\sigma_1}\lesssim \sum_{j=0}^{m-1} \de_j^{-\bar{\al}} \om(\de_j/3)^{\sigma_1}\lesssim \int_{0}^\de \frac{\om(\rho)^{\sigma_1}}{\rho^{\bar{\al}}}\frac{d\rho}{\rho}\lesssim \int_{0}^{R/4} \frac{\om(\rho)^{\sigma_1}}{\rho^{\bar{\al}}}\frac{d\rho}{\rho}.$$
At this point, using the Dini-H\"older condition \eqref{dhcond}, we obtain, after some simple manipulations,  	 
\begin{align*}
|{\bf q}_{m+1}-{\bf m}| &\lesssim
\de^\al M(x,\de) + \de^\al   \left[ {\bf I}_{1-\al}^{2\de}(|\mu|)(x)\right]^{\frac{1}{p-1}}  +  \de^\al {\bf I}_{1-\al}^{2\de}(|\mu|)(x)  M(x,\de)^{2-p}\\
&\quad  + \left(\fint_{Q_{\de}(x)}|\nabla u-{\bf q}_{Q_\de(x)}|^\kappa dy\right)^{1/\ka} + \left(\fint_{Q_{\de}(x)}|\nabla u-{\bf m}|^\kappa dy\right)^{1/\ka}.
\end{align*}
Here we also used  that $\de< R/4 < {\rm diam}(\Om)$ and the implicit constants are allowed to depend on ${\rm diam}(\Om)$.\\
Thus letting $m\rightarrow \infty$ and using Young's inequality we obtain
\begin{align*}
|\nabla u(x)-{\bf m}| &\lesssim
\de^\al M(x,\de) + \de^\al   \left[ {\bf I}_{1-\al}^{R}(|\mu|)(x)\right]^{\frac{1}{p-1}}  \\
&\quad  + \left(\fint_{Q_{\de}(x)}|\nabla u-{\bf q}_{Q_\de(x)}|^\kappa dz\right)^{1/\ka} + \left(\fint_{Q_{\de}(x)}|\nabla u-{\bf m}|^\kappa dz\right)^{1/\ka}.
\end{align*}
Likewise, we also have 
\begin{align*}
|\nabla u(y)-{\bf m}| &\lesssim
\de^\al M(y,\de) + \de^\al   \left[ {\bf I}_{1-\al}^{R}(|\mu|)(y)\right]^{\frac{1}{p-1}}  \\
&\quad  + \left(\fint_{Q_{\de}(y)}|\nabla u-{\bf q}_{Q_\de(y)}|^\kappa dz\right)^{1/\ka} + \left(\fint_{Q_{\de}(y)}|\nabla u-{\bf m}|^\kappa dz\right)^{1/\ka}\\
&\lesssim
\de^\al M(y,\de) + \de^\al   \left[ {\bf I}_{1-\al}^{R}(|\mu|)(y)\right]^{\frac{1}{p-1}}  \\
&\quad  + \left(\fint_{Q_{3\de}(x)}|\nabla u-{\bf q}_{Q_{3\de}(x)}|^\kappa dz\right)^{1/\ka} + \left(\fint_{Q_{3\de}(x)}|\nabla u-{\bf m}|^\kappa dz\right)^{1/\ka},
\end{align*}
where we used that $Q_\de(y)\subset Q_{3\de}(x)$.\\
Combining these two estimates and choosing ${\bf m}={\bf q}_{Q_{3\de}(x)}$, we find
\begin{align}\label{nabu-xy}
& |\nabla u(x)-\nabla u(y)| \lesssim
 \de^\al  \left\{ \left[ {\bf I}_{1-\al}^{R}(|\mu|)(x)\right]^{\frac{1}{p-1}} + \left[ {\bf I}_{1-\al}^{R}(|\mu|)(y)\right]^{\frac{1}{p-1}}\right\} \nonumber\\
&\qquad + \de^\al \left[M(x,\de) + M(y,\de)\right]   + \left(\fint_{Q_{3\de}(x)}|\nabla u-{\bf q}_{Q_{3\de}(x)}|^\kappa dz\right)^{1/\ka}.
\end{align}
As $\de< R/4$ and $Q_{R/4}(x)\cup Q_{R/4}(y) \subset Q_{R}(x_0)$, we can apply Theorem  \ref{dinimax} with $\al=1$ to have the bound
\begin{align}\label{M-xy}
M(x,\de) + M(y,\de)&\lesssim \left[ {\bf I}_{1}^{R/4}(|\mu|)(x)\right]^{\frac{1}{p-1}} + \left(\fint_{Q_{R/4}(x)}|\nabla u|^\kappa dz\right)^{1/\ka}\nonumber\\
&\qquad +\left[ {\bf I}_{1}^{R/4}(|\mu|)(y)\right]^{\frac{1}{p-1}} + \left(\fint_{Q_{R/4}(y)}|\nabla u|^\kappa dz\right)^{1/\ka}\nonumber\\
&\lesssim \left[ {\bf I}_{1-\al}^{R}(|\mu|)(x)\right]^{\frac{1}{p-1}} + \left[ {\bf I}_{1-\al}^{R}(|\mu|)(y)\right]^{\frac{1}{p-1}}\nonumber\\
&\qquad +  R^{-\al} \left(\fint_{Q_{R}(x_0)}|\nabla u|^\kappa dz\right)^{1/\ka}.
\end{align}
Similarly, we can use Theorem \ref{supdnvthm} to bound the last term on the right-hand of \eqref{nabu-xy} as follows:
\begin{align}\label{oscq3de}
 &\left(\fint_{Q_{3\de}(x)}|\nabla u-{\bf q}_{Q_{3\de}(x)}|^\kappa dz\right)^{1/\ka}\lesssim \de^\al {\bf M}^{\#, 3R/4}_{\al, \ka}(\nabla u)(x)\nonumber\\
 & \qquad \lesssim \de^\al \left [ {\bf M}^{3R/4}_{1-\alpha, \kappa}(\mu)(x) \right]^{\frac{1}{p-1}} +  \de^\al \left [ {\bf I}_{1}^{3R/4}(|\mu|)(x) \right]^{\frac{1}{p-1}} \nonumber\\
 & \qquad \qquad +  \left(\frac{\de}{R}\right)^{\alpha} \left(\fint_{Q_{3R/4}(x)} |\nabla u|^\kappa dz\right)^\frac{1}{\kappa}\nonumber\\
 &\qquad \lesssim \de^\al  \left [ {\bf I}_{1-\al}^{R}(|\mu|)(x) \right]^{\frac{1}{p-1}}  +  \left(\frac{\de}{R}\right)^{\alpha} \left(\fint_{Q_{R}(x_0)} |\nabla u|^\kappa dz\right)^\frac{1}{\kappa}.
\end{align}
We now plug estimates \eqref{M-xy} and \eqref{oscq3de} into \eqref{nabu-xy} to arrive at 
\begin{align*}
 |\nabla u(x)-\nabla u(y)| &\lesssim
\de^\al  \left\{ \left[ {\bf I}_{1-\al}^{R}(|\mu|)(x)\right]^{\frac{1}{p-1}} + \left[ {\bf I}_{1-\al}^{R}(|\mu|)(y)\right]^{\frac{1}{p-1}}\right\} \nonumber\\
&\qquad +  \left(\frac{\de}{R}\right)^{\alpha} \left(\fint_{Q_{R}(x_0)} |\nabla u|^\kappa dz\right)^\frac{1}{\kappa}.
\end{align*}
This completes the proof because $\delta\leq \frac{1}{2}|x-y|$.
\end{proof}

\begin{remark}
	In Theorems \ref{0toi1}-\ref{gradholdTHM} and \ref{dinimax}-\ref{supdnvthm} we may take $\sigma_1=1$ in \eqref{Dini-VMO}, \eqref{dhcond} and \eqref{Dini-VMO-stronger}, provided we replace 
	$\om$ with a non-decreasing function $\tilde{\om}:[0,1]\rightarrow [0, \infty)$ such that $$\lim_{\rho\rightarrow 0} \tilde{\om}(\rho)=0,\quad  {\rm and} \quad 
	|A(x,\xi)-A(y,\xi)| \leq \tilde{\om}(|x-y|) |\xi|^{p-1}$$
	for all $x,y,\xi\in \RR^n$, $|x-y|\leq 1$. The reason is that in this case solutions to \eqref{eq1} are locally Lipschitz, and we can also take $\sigma_1=1$ in Lemma \ref{Dini-VMOlem}; see \cite[Section 8]{KM1}.
\end{remark}

\noindent \textbf{Acknowledgments:} 	Q.H.N.  is supported by the Academy of Mathematics and Systems Science, Chinese Academy of Sciences startup fund, and the National Natural Science Foundation of China (No. 12050410257 and No. 12288201) and  the National Key R$\&$D Program of China under grant 2021YFA1000800. N.C.P. is supported in part by Simons Foundation (award number 426071).

\end{document}